\crefname{hypothesis}{Hypothesis}{Hypotheses}
\providecommand{\mathbold}[1]{\bm{#1}}
\newcommand{\R}{\mathbb{R}}
\newcommand{\N}{\mathbb{N}}
\newcommand{\Q}{\mathbb{Q}}
\newcommand{\Z}{\mathbb{Z}}
\newcommand{\cA}{\mathcal{A}}
\newcommand{\emat}{\cA}
\newcommand{\evec}{\vct{\alpha}}
\newcommand{\bevec}{\vct{\beta}}
\newcommand{\e}{\mathrm{e}}
\newcommand{\eentry}{\alpha}
\newcommand{\vct}[1]{\bm{#1}}
\newcommand{\mtx}[1]{\mathbold{#1}}
\newcommand{\foralll}{\text{ for all }}
\DeclareMathOperator{\cl}{cl}
\DeclareMathOperator{\supp}{supp}
\DeclareMathOperator{\invsupp}{invsupp}
\DeclareMathOperator{\spann}{span}
\newcommand{\Csage}{\mathcal{C}}
\newcommand{\csage}{\mathcal{C}}
\newcommand{\positiv}{Positivstellensatz}
\newcommand{\positive}{Positivstellens\"atze}
\newcommand{\sigation}{signomialization}
\newcommand{\polation}{polynomialization}
\newcommand{\Sigation}{Signomialization}
\newcommand{\diff}{\,\mathrm{d}}
\DeclareMathOperator{\Cont}{Cont}
\DeclareMathOperator{\Exp}{exp}
\newcommand{\set}[1]{\mathsf{#1}}
\newcommand{\X}{\set{X}}
\newcommand{\genExps}{A}
\theoremstyle{plain}
\newtheorem*{positivIntro}{Conditional SAGE \positive}
\theoremstyle{plain}
\newtheorem*{uppersIntro}{Upper Bounds}
\definecolor{DarkGreen}{rgb}{0,0.65,0}
\definecolor{NiceBlue}{rgb}{0.2,0.2,0.75}
\newcommand{\struc}[1]{{#1}}  
\title{Algebraic perspectives on signomial optimization\thanks{Authors listed alphabetically. \funding{R. Murray was supported by an NSF Graduate Research Fellowship}}}
\author{
    Mareike Dressler\thanks{
            Department of Mathematics, University of California, San Diego
          (\email{mdressler@ucsd.edu}).
    }
\and Riley Murray\thanks{
        Department of Electrical Engineering and Computer Sciences, University of California, Berkeley (\email{rjmurray@berkeley.edu}).
        \textit{Previously}: Department of Computing and Mathematical Sciences, California Institute of Technology.
  }
}
\begin{document}

\maketitle

\begin{abstract}
    Signomials are obtained by generalizing polynomials to allow for arbitrary real exponents.
    This generalization offers great expressive power, but has historically sacrificed the organizing principle of ``degree'' that is central to polynomial optimization theory.
    We reclaim that principle here through the concept of signomial rings, which we use to derive complete convex relaxation hierarchies of upper and lower bounds for signomial optimization via sums of arithmetic-geometric exponentials (SAGE) nonnegativity certificates.
    The Positivstellensatz underlying the lower bounds relies on the concept of \textit{conditional SAGE} and removes regularity conditions required by earlier works, such as convexity and Archimedeanity of the feasible set.
    Through worked examples we illustrate the practicality of this hierarchy in areas such as chemical reaction network theory and chemical engineering.
    These examples include comparisons to direct global solvers (e.g., BARON and ANTIGONE) and the Lasserre hierarchy (where appropriate).
    The completeness of our hierarchy of upper bounds follows from a generic construction whereby a \positiv{} for signomial nonnegativity over a compact set provides for arbitrarily strong \textit{outer approximations} of the corresponding cone of nonnegative signomials.
    While working toward that result, we prove basic facts on the existence and uniqueness of solutions to signomial moment problems.
\end{abstract}

\begin{keywords}
  Sums of arithmetic-geometric exponentials, nonnegative signomials, exponential sums, signomial programming, relative entropy programming, exponential cone programming, moment problems, nonnegative circuit polynomials, sums of squares, polynomial optimization
\end{keywords}

\begin{AMS}
  Primary: 14P99, 90C25. Secondary: 90C23, 52A20, 28B15.
\end{AMS}

\section{Introduction}

\positive{} in real algebraic geometry and moment problems in functional analysis are cornerstones of contemporary polynomial optimization theory.
We prove new results in these areas for a class of functions known as \textit{signomials}.
In much of the literature, signomials are parameterized as generalized polynomials $\vct{t} \mapsto \sum_{\evec\in \genExps}c_{\evec}t_1^{\eentry_1}\cdots t_{n}^{\eentry_n}$, which are well-defined for any $\evec\in\R^n$ provided $\vct{t} >\vct{0}$.
We study signomials under an equivalent parameterization obtained by a logarithmic change of variables.
That is, we consider a signomial supported on a set $\genExps \subset \R^n$ as a finite real-linear combination $\vct{x} \mapsto \sum_{\evec\in \genExps}c_{\evec}\exp\langle\evec,\vct{x}\rangle$.

Our main contributions in this article are constructions of arbitrarily strong inner and outer approximations of cones of signomials that are nonnegative on a prescribed compact set.
These constructions serve as the basis of our hierarchies of convex programs for nonconvex signomial optimization.
In the context of signomial nonnegativity problems, one can move between the exponential and generalized-polynomial parameterizations by identifying a signomial with its coefficient vector. 

\subsection{Why study signomials?\nopunct}

The most prominent use of signomials in applied mathematics is \textit{signomial programming} -- the minimization of a signomial subject to finitely many signomial inequality constraints.
Nonconvex signomial programs have been used in chemical engineering since the 1970's \cite{BW1969-sig-ChemE,DP1973-sigprog-idea,Dembo1978-sig-ChemE}.
More recently, there has been a surge of interest in nonconvex signomial programming for aerospace engineering and transportation systems; see \cite{York2018b-sig-aero,Hall2018-sig-aero,Ozturk2019-sig-aero} for academic work on this topic and \cite{kirschen2021hyperloop} for an industrial example.
The conceptual source of these signomial models in engineering is the simple practice of modeling systems with non-polynomial power laws \cite{gudmundsson2014,Green2019}.

Signomials also provide an avenue for studying sparse polynomials in a degree-independent way.
For example, the nonnegativity certificates studied here have been used to solve high-degree polynomial optimization problems arising in electrical engineering, particularly at scales unmatched by polynomial-specific methods \cite{WKDC-2020}.
Further applications can be found in dynamical systems through analysis of chemical reaction networks and biochemical networks \cite{Mller2015,Mller2019}.
In that context, problems relating to multistationarity can be framed as deciding nonnegativity of a polynomial with degree on the same order as the number of variables \cite{Craciun2005,Pantea2012,Conradi2017}.
A variation of the methods we study have been used to characterize the set of model parameters for which a certain cell signaling mechanism exhibits multistationarity \cite{FKdWY-2020-sonc-multistationarity}.

\subsection{Our starting point for signomial nonnegativity}

Deciding nonnegativity of a signomial $\vct{x} \mapsto \sum_{i,j} M_{ij}\exp(x_i + x_j)$ induced by a square matrix $\mtx{M}$ is equivalent to the well-known matrix copositivity problem.
Since matrix copositivity is NP-hard \cite{Murty1987} it follows that signomial nonnegativity is intractable in the general case.
However, if we are given a nonnegative signomial with at most one negative term, then we can always prove its nonnegativity by either of two arguments.
One such argument appeals to a weighted arithmetic-geometric mean inequality \cite{reznick-1989,Pantea2012,IdW2016}, while the other uses convexity of the exponential function together with convex duality \cite{CS2016}.
The nonnegative signomials with at most one negative term are called \textit{arithmetic-geometric exponentials} or \textit{AGE functions}.
A sum of AGE functions is called \textit{SAGE}.

Using the convex duality argument, the problem of recognizing an AGE function reduces to finding a solution to a single relative entropy inequality.
More generally, the question of whether a signomial with $k$ negative terms is SAGE amounts to finding a solution to $k$ simultaneous relative entropy inequalities \cite{MCW2018}.
These tasks can be accomplished efficiently within the framework of convex \textit{relative entropy programming} (REP), see \cite{CS2016, CS:REP}.
The duality argument also generalizes to constrained problems.
If we have a signomial with at most one negative term, then we can decide its nonnegativity over a convex set $\X$ by finding a feasible solution to a single relative entropy inequality that involves the support function of $\X$ \cite{MCW2019}.
The conceptual approach in \cite{MCW2019} is called \textit{conditional SAGE}.

\subsection{Contributions}\label{subsec:summary_of_results}

To state our results we require some basic definitions.
The symbol $\cA\subset\R^n$ denotes a distinguished finite ground set that contains the origin.
Every vector $\evec\in\cA$ is associated with a ``monomial'' basis function $\e^{\evec}\colon \R^n \to \R$ that takes values $\e^{\evec}(\vct{x}) = \exp\langle\evec,\vct{x}\rangle$.
The resulting \textit{signomial ring} $\R[\cA]$ is the set of all finite products and real-linear combinations of basis functions $\{ \e^{\evec} \}_{\evec\in\cA}$.

Our first result is a signomial \textit{\positiv{}}. I.e., a representation theorem for signomials in $\R[\cA]$ that are positive on a given compact set.
We require two pieces of terminology to state this result.
A signomial is called \textit{$\X$-SAGE} if it can be written as a sum of signomials that are nonnegative on $\X$ and that each have at most one negative term.
A \textit{posynomial} is a signomial with only nonnegative terms.

\begin{positivIntro}[Conditional SAGE \positiv{} (Theorem \ref{thm:newPositiv})]  
    Let $\X$ be a compact convex set and $G$ a finite set of signomials in $\R[\cA]$. If $f \in \R[\cA]$ is positive on $\set{K} \coloneqq \{ \vct{x} \in \X\,:\, g(\vct{x}) \geq 0 \text{ for all } g \in G \}$, then
    there is an $r \in \N$ for which 
    \[
        \textstyle\left(\sum_{\evec\in\cA}\e^{\evec}\right)^r f = \lambda_f + \sum_{g \in G} \lambda_g \cdot g,
    \]
    where $\lambda_f \in \R[\cA]$ is $\X$-SAGE and each $\lambda_g \in \R[\cA]$ is a posynomial.
\end{positivIntro}
Theorem \ref{thm:newPositiv} includes no regularity condition akin to Archimedeanity of $\set{K}$; it is even representation-independent when $G = \emptyset$ (see Corollary \ref{cor:newPositiv}).
It is the first signomial \positiv{} that permits irrational exponents and the first to leverage conditional SAGE in the presence of nonconvex constraints.

Section \ref{sec:lower_bounds} provides a hierarchy of REP relaxations to approach a signomial's minimum from below (see Definition \ref{def:lowerHierarchy}).
Theorem \ref{thm:newPositiv} shows this hierarchy is \textit{complete}.
That is, it can produce bounds of arbitrary accuracy.
Its design includes careful consideration to the concept of ``$\cA$-degree'' (which we study in Section \ref{sec:sig_rings}) and offers improved efficiency and stronger bounds relative to earlier SAGE-based methods. 
We demonstrate our approach with three worked examples: a toy nonconvex quadratic program in five variables, a problem adapted from chemical reaction network theory \cite{Pantea2012,MurrayPhD}, and a chemical reactor design problem \cite{BW1969-sig-ChemE,BW1971}.
The first two of these problems are polynomial in $\vct{t} = \exp\vct{x}$ and so are amenable to the \textit{sums of squares} (SOS) based Lasserre hierarchy \cite{Lasserre2001}; see also \cite{ParriloPhD}.
We find that our methods result in a factor-500 speedup over the Lasserre hierarchy on the second of these problems.
Our method substantially outperforms global solvers SCIP \cite{SCIP:2009,SCIP:2020}, BARON \cite{BARON:1996}, and ANTIGONE \cite{ANTIGONE:2014} on the reactor design problem.

In Section \ref{sec:moment} we develop arbitrarily strong outer approximations of cones of nonnegative signomials (Theorem \ref{thm:outer_rep}), and in Section \ref{sec:upper_bounds} we define hierarchies of convex relaxations for approaching the minimum of a signomial from above (Theorem \ref{thm:generalUB}).
Proving the former theorem requires establishing basic facts on the existence and uniqueness of solutions to signomial moment problems (Propositions \ref{prop:sigRHT} and \ref{prop:unique_rep_meas}).
We state the latter theorem here; let $\R[\cA]_{d}$ denote the space spanned by products of $d$ possibly-nondistinct monomial basis functions $\{\e^{\evec}\}_{\evec\in\cA}$.
\begin{uppersIntro}[Upper Bounds (Theorem \ref{thm:generalUB})]
    Suppose $\operatorname{span}(\cA) = \R^n$, $\set{K}$ is compact, and $\mu$ is a Borel measure with support $\set{K}$. Let $(C_d)_{d \geq 1}$ be a nested sequence of closed convex cones where \emph{(i)} $C_d \subset \R[\cA]_{d}$ contains all posynomials in $\R[\cA]_{d}$, \emph{(ii)} all signomials in $C_d$ are nonnegative on $\set{K}$, and \emph{(iii)} the union $\cup_{d \geq 1} C_d$ includes every signomial in $\R[\cA]$ that is positive on $\set{K}$.
    For any $f \in \R[\cA]$, the values
    \begin{equation*}
        \theta_{d} =  \inf_\psi\left\{ \int f \psi\diff\mu \,:\, \int \psi\diff \mu= 1,\, \psi \in C_d \right\}
    \end{equation*}
    monotonically converge to $\min\{ f(\vct{x}) \,:\, \vct{x} \in \set{K}\}$ from above.
\end{uppersIntro}
The theorem has no assumptions whatsoever on the representation of $\set{K}$.
Moreover, it is agnostic to the precise nature of the convex cones $(C_d)_{d \geq 1}$.
It can be applied with our Theorem \ref{thm:newPositiv}, with earlier signomial \positive{}, and with new signomial \positive{} that are yet to be discovered. 
Section \ref{sec:upper_bounds} provides general guidance on how these hierarchies of upper bounds can be implemented.

In summary, our main theoretical contributions are Theorems \ref{thm:newPositiv}, \ref{thm:outer_rep}, and \ref{thm:generalUB}.
The particular design of our hierarchy from Section \ref{sec:lower_bounds} is a methodological contribution with practical consequences for both signomial and polynomial optimization.
Beyond these specific technical contributions, we believe that our analysis framework of signomial rings creates a foundation for future algebraic studies of signomial nonnegativity and signomial optimization. We offer suggestions for such future studies in Section \ref{sec:outlook}.

\subsection{Related work}\label{subsec:related_work}

Conditional SAGE is a relatively new concept.
Progress has been made in understanding this technique through a convex-combinatorial structural analysis of ``$\X$-SAGE cones'' \cite{MNT2020,NT:sublinear:2021}, group-theoretic dimension reduction techniques \cite{mnrtv:2021}, and a \positiv{} for signomial nonnegativity over compact convex sets \cite{WJYP2020}.
These methods also have demonstrated applications in engineering \cite{WKDC-2020}.

Conditional SAGE generalizes directly from the concept of ordinary SAGE (for global signomial nonnegativity) and so has genealogical connections to nonnegativity certificates based on the arithmetic-geometric mean inequality.
This broader literature includes Reznick's \textit{agiforms} \cite{reznick-1989}, Pantea, Koeppl, and Craciun's characterization of $\R^n_+$-nonnegative circuit polynomials \cite{Pantea2012}, and Iliman and de Wolff's \textit{sums of nonnegative circuits} or \textit{SONC} \cite{IdW2016}.
The nature of the SAGE-SONC relationship was resolved in \cite[\S 5]{MCW2018} with the introduction of \textit{SAGE polynomials}.
One can also understand the SAGE-SONC relationship by implicitly reading results by Wang \cite{Wang2018a}.
The unifying perspective is that both of these approaches characterize elementary nonnegative functions that are a sum of ``monomials'' where at most one monomial contributes a negative value to the overall sum.
Further generalizations along these lines can be found in \cite{KNT2019} (which allows monomials $\prod_{i=1}^n |x_i|^{\eentry_i}$) and \cite[\S 4]{MCW2019}.

Much of the interest in arithmetic-geometric based nonnegativity certificates stems from their sparsity preservation properties \cite{Wang2018b,MCW2018,KNT2019}.
As a consequence of this sparsity preservation, \textit{it is possible to implement these methods with complexity that depends only on the number of terms in the signomial or polynomial} \cite{MCW2018}.
In the polynomial setting, much work has gone into the development of variants of SOS that are capable of exploiting \textit{structured sparsity}, see, e.g., \cite{KKW-2004-sparsesos-supports,WKKM-2006-sparsesos-chordal,WLX2019-sparsesos-crosssparse,WML2021-TSSOS}.
Arithmetic-geometric certificates are significant in the polynomial literature largely because they can take advantage of \textit{unstructured sparsity}.

To our knowledge, the earliest signomial \positiv{} is Delzell's extension of the weak form of P{\'{o}}lya's \positiv{} to signomials with rational exponents.
Chandrasekaran and Shah gave two \positive{} for ordinary SAGE in \cite{CS2016}: one for $\set{K}=\R^n$ and one for Archimedean $\set{K}$.
Wang et al.\ developed the first conditional SAGE \positiv{} in the case when $\set{K}$ is a compact convex set \cite{WJYP2020}.
We compare our Theorem \ref{thm:newPositiv} to these SAGE-based \positive{} in Section \ref{subsec:compare_positiv}.
The proof of our result relies on a reduction to the Dickinson-Povh \positiv{} for homogeneous polynomials with infinitely many homogeneous polynomial inequality constraints \cite{DP2014}.
Dickinson and Povh have subsequently used their \positiv{} to develop complete hierarchies for \textit{polynomial cone programming} \cite{DP2019}.

Our results in signomial moment theory are manifestations of more abstract theorems in the functional analysis literature.
We particularly rely on a generalized Riesz-Haviland Theorem proven by Marshall almost two decades ago \cite{marshall2003}.
More recent work along these lines includes \cite{GIKM:2018,IKM:2018} and especially \cite{curto2020truncated} -- which may have computational implications for our hierarchy of lower bounds.
Moment problems involving univariate power functions have been studied in \cite{NIT-2003,NIPPT-2005} for purposes of estimating probability density functions.

The idea for our outer-approximations of signomial nonnegativity cones and our hierarchy of upper bounds is adapted from a work by Lasserre \cite{Lasserre2011} and inspired by a generalization thereof by de Klerk, Lasserre, Laurent, and Sun \cite{dKLLS2017}.
Several investigations have been conducted to determine rates of convergence for these upper bounds under various conditions \cite{dKLS2017-convergence,dKHL2017-improved,dKL2020-convergence,dKL2020-worst,SL2020-convergence-improved}.
We leave questions of convergence rates with our method to future work.

\subsection{Conventions}\label{subsec:basic_notation}

The index set from $1$ to $n$ is denoted by $[n]$.
We use $\R_+$ for the nonnegative reals and $\R_{++}$ for the positive reals.
The zero vector and vector of all ones (in spaces clear from context) are given by $\vct{0}$ and $\vct{1}$.
For sets $A, B$, we write $A \subset B$  for non-strict inclusion.
The exponential function is extended first to vectors in an elementwise fashion and then to sets in a pointwise fashion.
All logarithms are base $e = \exp(1)$.
Domains of signomials or polynomials (or subsets of such domains) are written with capital letters in sans-serif font.
For such a domain $\set{K}$, we sometimes use ``$\set{K}$-nonnegative'' as an adjective to indicate that a function is nonnegative on $\set{K}$.
We similarly use the term ``$\set{K}$-positive.''

\section{Preliminaries on conditional SAGE}\label{sec:sage_background}

This section reviews the machinery of conditional SAGE and introduces basic notation used throughout the article.

For a finite set $\genExps \subset \R^n$, we use $\R^{\genExps}$ to denote the real $|\genExps|$-tuples indexed by $\evec\in \genExps$.
With this notation we can identify a signomial $\sum_{\evec\in \genExps}c_{\evec}\e^{\evec}$ by its coefficient vector $\vct{c}\in\R^{\genExps}$.
Given some $\bevec\in \genExps$, the vector $\vct{c}_{\setminus\bevec}$ is formed by dropping $c_{\bevec}$ from $\vct{c}$.
We use $\{\vct{\delta}_{\evec}\}_{\evec\in \genExps}$ for the standard basis in $\R^{\genExps}$ and we identify $\genExps$ with a linear operator $\genExps \colon \R^n \to \R^{\genExps}$ that takes values $\genExps\vct{x} = (\langle \evec, \vct{x}\rangle)_{\evec \in \genExps}$.
When thinking of $\genExps$ as a linear operator, we can effectively regard it as a matrix with rows sorted according to the lexicographic order on $\R^n$.

\subsection{The primal perspective}\label{subsec:prim_sage}

Recall from Section \ref{subsec:summary_of_results} that we call a signomial \textit{$\X$-SAGE} if it can be written as a sum of $\X$-nonnegative signomials each with at most one negative term.
Unsurprisingly, we refer to a signomial as \textit{$\X$-AGE} if it is nonnegative on $\X$ and has at most one negative term.
We now put our notation to work in the following characterization of $\X$-AGE functions.
\begin{proposition}[\cite{MCW2019}]\label{prop:represent_age_cones}  
     Let $\X$ be a convex set in $\R^n$ and $\genExps\subset \R^n$ be finite.
     For $\bevec \in \genExps$ and $\vct{c}\in \R^{\genExps}$ with $\vct{c}_{\setminus\bevec} \geq \vct{0}$, the signomial
     $\sum_{\evec\in \genExps}c_{\evec}\e^{\evec}$ is $\X$-AGE if and only if
    some $\vct{\nu} \in \R^{\genExps}$ satisfies the conditions $\langle\vct{1},\vct{\nu}\rangle = 0$,
    \begin{equation}\label{eq:represent_age_cones}
        \sup_{\vct{x}\in \X}\left\langle \vct{\nu}, -\genExps \vct{x} \right\rangle + \sum_{\evec\in \genExps\setminus\bevec} \nu_{\evec}\log\left(\frac{\nu_{\evec}}{c_{\evec}}\right) + \nu_{\bevec} \leq c_{\bevec},
    \end{equation}
    and $\vct{\nu}_{\setminus\bevec} \geq \vct{0}$. 
\end{proposition}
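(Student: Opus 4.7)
The plan is to recast both directions as a convex duality computation. First I would observe that, because $\e^{\bevec}(\vct{x})$ is strictly positive, the signomial $\sum_{\evec\in\genExps} c_\evec\e^\evec$ is $\X$-nonnegative if and only if $c_\bevec + h(\vct{x}) \geq 0$ on $\X$, where $h(\vct{x}) := \sum_{\evec \neq \bevec} c_\evec \exp\langle \evec - \bevec,\vct{x}\rangle$ is convex in $\vct{x}$ since $\vct{c}_{\setminus\bevec} \geq \vct{0}$. The proposition thus reduces to the equivalence: $\inf_{\vct{x} \in \X} h(\vct{x}) \geq -c_\bevec$ if and only if some admissible $\vct{\nu}$ satisfies \eqref{eq:represent_age_cones}.

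For sufficiency, I would start from the elementary Legendre--Fenchel identity $c e^t = \sup_{\nu \geq 0}\{\nu t - \nu \log(\nu/c) + \nu\}$, valid for $c \geq 0$ under the convention $0 \log 0 = 0$. Any $\vct{\nu}_{\setminus\bevec} \geq \vct{0}$ thus yields a pointwise lower bound on each nonzero term of $h$. Summing and choosing $\nu_\bevec := -\langle \vct{1},\vct{\nu}_{\setminus\bevec}\rangle$ so that $\langle \vct{1},\vct{\nu}\rangle = 0$ makes the linear-in-$\vct{x}$ part collapse to $\langle \vct{\nu},\genExps\vct{x}\rangle$, because $\sum_{\evec\ne\bevec}\nu_\evec(\evec - \bevec) = \genExps^\intercal\vct{\nu}$. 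Taking the infimum over $\vct{x} \in \X$ and rearranging then recovers exactly \eqref{eq:represent_age_cones}, certifying the $\X$-AGE property.

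Necessity is the substantive direction. Here I would apply Fenchel--Rockafellar strong duality to $p^\star := \inf_{\vct{x}\in\X} h(\vct{x})$. The sufficiency computation is precisely weak duality for this program; since $h$ is finite and continuous on all of $\R^n$, standard strong duality results for the sum of a convex function and the indicator of a convex set (e.g., Rockafellar's theorem) close the gap, so $p^\star$ equals the dual optimum. A short computation identifies this dual as a feasibility problem for \eqref{eq:represent_age_cones} over admissible $\vct{\nu}$, and the hypothesis $p^\star \geq -c_\bevec$ supplies the required $\vct{\nu}$. The main obstacle is potential non-attainment of the dual optimum---for instance when $\X$ is unbounded or the infimum of $h$ on $\X$ is not achieved---along with the degenerate cases $c_\evec = 0$ for some $\evec \neq \bevec$, which must force the corresponding $\nu_\evec = 0$. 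Both subtleties are handled in \cite{MCW2019} via a careful limiting argument, and since the proposition is restated verbatim from that reference I would cite it for these technicalities after presenting the duality derivation in full.
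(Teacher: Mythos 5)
The paper does not prove this proposition at all -- it is imported verbatim from \cite{MCW2019} -- so there is no in-paper argument to compare against. Your sketch is the standard convex-duality derivation used in that reference (and in \cite{CS2016} for $\X=\R^n$): divide by $\e^{\bevec}$, apply Fenchel--Young to each term $c_{\evec}\exp\langle\evec-\bevec,\cdot\rangle$ for sufficiency, and invoke strong duality plus dual attainment for necessity. The outline is correct, and you rightly identify dual attainment and the $c_{\evec}=0$ cases as the only nontrivial points, which is exactly where the cited proof does its real work.
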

An individual term $u\log(u/v)$ in equation \eqref{eq:represent_age_cones} is the \textit{relative entropy} between the scalars $(u, v)$.
This function is continuously extended to allow $u = 0$ or $v = 0$ and is convex when viewed as a function on $\R_+^2$.
The other term appearing in \eqref{eq:represent_age_cones} is the \textit{support function} of the convex set $-\genExps\X$ and is likewise convex in $\vct{\nu}$.

It is often necessary (e.g., when performing computations) to work with finite dimensional cones of $\X$-SAGE functions.
For this we introduce cones of $\X$-SAGE signomials supported on $\genExps$:
\begin{equation}\label{eq:def_x_sage}
    \csage_{\X}(\genExps) := \left\{ f \,:\, f = \sum_{\evec\in \genExps}c_{\evec}\e^{\evec} \text{ is } \X\text{-SAGE} \right\}.  
\end{equation}
To represent such a cone we rely on a crucial sparsity-preservation property: if $f$ is supported on $\genExps$ and has $k \geq 1$ negative coefficients, then $f$ is $\X$-SAGE if and only if it can be written as a sum of $k$ $\X$-AGE functions, each supported on $\genExps$ \cite{MCW2019}.
A full $\X$-SAGE cone can therefore be expressed as a Minkowski sum
\begin{equation*}
    \csage_{\X}(\genExps) = \textstyle\sum_{\bevec \in \genExps} \csage_{\X}(\genExps,\bevec)
\end{equation*}
of simpler cones
    $\csage_{\X}(\genExps,\bevec) := \left\{ f \,:\, f = \textstyle \sum_{\evec\in \genExps} c_{\evec}\e^{\evec} \text{ is } \X\text{-nonnegative}, ~ \vct{c}_{\setminus \bevec} \geq \vct{0} \right\}$.
By Proposition \ref{prop:represent_age_cones}, each of these $|\genExps|$ summands admits an explicit representation that is jointly convex in a signomial's coefficient vector and an auxiliary variable of size $|\genExps|$.
This implies a worst-case representation for $\csage_{\X}(\genExps)$ in $|\genExps|$ relative entropy and support function inequalities and $2|\genExps|^2$ decision variables.

\subsection{The dual perspective}\label{subsec:dual_sage}

Thinking of $\csage_{\X}(\genExps)$ and $\csage_{\X}(\genExps,\bevec)$ as cones of functions follows the convention from the polynomial optimization literature.
However, since vectors $\vct{c} \in \R^{\genExps}$ are in correspondence with signomials $\sum_{\evec\in \genExps}c_{\evec}\e^{\evec}$, we are free to regard these sets as cones in $\R^{\genExps}$.
Thinking of these sets as cones of coefficients makes it easier to derive the corresponding dual cones.
Recall, the dual to a convex cone $C \subset \R^m$ is $C^\dagger = \{ \vct{v} \,:\, \langle\vct{u},\vct{v} \rangle \geq 0 \text{ for all } \vct{u} \in C\}$.
For each $\bevec \in \genExps$, we find
\begin{align}\label{eq:represent_dual_x_age}
    \csage_{\X}(\genExps,\bevec)^\dagger = \cl\Big\{& \vct{v} \in \R^{\genExps}_{++} \,:\, \text{some } \vct{z} \text{ satisfies } \vct{z} / v_{\bevec} \in \X \\
    & \text{and } v_{\bevec}\log\left(\frac{v_{\evec}}{v_{\bevec}}\right) \geq \langle\evec - \bevec, \vct{z}\rangle \, \forall \, \evec \in \genExps \Big\}. \nonumber
\end{align}
Standard rules in conic duality then tell us that $\csage_{\X}(\genExps)^\dagger = \cap_{\bevec \in \genExps} \csage_{\X}(\genExps,\bevec)^\dagger$.

The dual formulation facilitates solution recovery when using SAGE-based REP relaxations for signomial optimization (see \cite[\S 3.2]{MCW2019}).
The dual formulation also provides a window into how the complexity of $\X$ affects the complexity of an $\X$-SAGE cone.
If $\X = \{ \vct{x} \in \R^n\,:\, \mtx{G}\vct{x} + \vct{h} \in C\}$ for a matrix $\mtx{G}$, vector $\vct{h}$, and convex cone $C$, then the constraint $\vct{z} / v_{\bevec} \in \X$ can be written as $\mtx{G} \vct{z} + \vct{h}v_{\bevec} \in C$.
Therefore the problem of optimizing over a dual $\X$-SAGE cone is not much harder than optimizing over a dual $\R^n$-SAGE cone (for which efficient algorithms exist) and optimizing over $\X$ (which can vary in difficulty).

\section{Signomial rings}\label{sec:sig_rings}

As we explained in Section \ref{subsec:summary_of_results}, $\cA \subset \R^n$ is a finite set that contains the origin and the signomial ring $\struc{\R[\cA]}$ is the $\R$-algebra generated by the basis functions $\{ \e^{\evec} \}_{\evec\in\cA}$.
This section explores a way of grading signomial rings by degree.
We begin by defining a sequence of sets
\[
   \cA_{d} = \left\{ \sum_{\evec\in\cA} w_{\evec} \evec \,:\, \vct{w} \in \N^{\cA},~ \langle\vct{1}, \vct{w}\rangle \leq d \right\} \quad\text{ for }\quad d \geq 1.
\]
Where we note that $\cA_1 = \cA$.
Next, we formally define the \textit{support} of a signomial $f$, denoted $\supp(f)$, as the smallest set $A \subset \R^n$ for which $f \in \spann\{\e^{\evec}\}_{\evec\in A}$.
The \textit{$\cA$-degree} of $f$ is then the smallest integer $d$ for which $\supp(f) \subset \cA_{d}$, and this number is denoted $\deg_{\cA}(f)$.
We use $\R[\cA]_{d}$ for the space of signomials of $\cA$-degree at most $d$.

The definition of $\cA$-degree is, by itself, enough to get through the proof of our \positiv{} in Section \ref{sec:sig_positiv}.
In later sections it is important to understand the properties of $\cA$-degree.
We explore those basic properties here.

\subsection{The $\cA$-degree of a single signomial}

The concept of $\cA$-degree is artificially imposed on signomials.
If $\supp(f) \subset \cA$, then the $\cA$-degree of $f$ is trivially one.
Note that unless $\cA$ is decided by some external factor, one can always update $\cA \leftarrow \cA \cup \supp(f)$, and so every signomial has degree one when considered in a suitable ring.
In fact, if we chose to interpret previous SAGE-based hierarchies in terms of signomial rings, then we find that they always make such a choice for $\cA$.

In this article we show it can be advantageous to consider signomials in rings where their resulting $\cA$-degree is greater than one.
This creates a need to determine $\deg_{\cA}(f)$ when there is no special relationship between $\supp(f)$ and $\cA$.
The naive thing to do in this case is to explicitly construct the sets $\cA_2, \cA_3, \ldots$ and return the first $d$ where $\supp(f) \subset \cA_{d}$.
However, that algorithm does not terminate if $f$ does not belong to $\R[\cA]$.
In practice we suggest $\cA$-degree and membership in $\R[\cA]$ be determined by a separable calculation $\deg_{\cA}(f) = \max\{ \deg_{\cA}(\e^{\evec}) \,:\, \evec \in \supp(f) \}$ involving simple integer-linear programs
\begin{align*}
    \deg_{\cA}(\e^{\evec}) = \inf\{\, \ell \, : \, & \ell \geq 1 \text{ and } \vct{w} \in \N^{\cA} \text{ satisfy}\\ &\langle\vct{1},\vct{w}\rangle \leq \ell \text{ and } {\textstyle\sum_{\bevec\in\cA} \bevec w_{\bevec} = \evec} \}.
\end{align*}
We also propose that these values are memoized in a suitable data structure to amortize the cost of computing them.
Such memoization can lead to significant speed-ups when computing the $\cA$-degrees of many signomials at once.

The following example shows how updating $\cA$ can have both large and small effects on a given signomial's $\cA$-degree.

\begin{example}\label{ex:consider_different_rings}
    Suppose $\mtx{M} \in \R^{n \times n}$ is a dense symmetric matrix and consider the polynomial $p(\vct{t}) = \prod_{i=1}^n t_i + \sum_{i,j=1}^n M_{ij} t_i t_j$.
    From $p$ we construct the signomial  $f(\vct{x}) \coloneqq p(\exp \vct{x})$. 
    When $f$ is viewed in the rings generated by
    \[
    \cA = \{\vct{0}\} \cup \{\vct{\delta}_{i}\}_{i=1}^n, ~~ \cA' = \cA \cup \{ \vct{1} \}, ~\text{ and }~ \cA'' = \cA' \cup \{ \vct{\delta}_i + \vct{\delta}_j \,:\, (i,j) \in [n]^2 \}
    \]
    we have $\deg_{\cA}(f) = n$, $\deg_{\cA'}(f) = 2$, and $\deg_{\cA''}(f) = 1$ respectively. \hfill $\blacklozenge$
\end{example}

Although $\cA$-degree is not intrinsic to signomials, it exhibits essential properties of coordinate-system invariance.
For any $\vct{b}$ in $\R^n$ and $f \in \R[\cA]$, the signomial $g(\vct{x}) = f(\vct{x} - \vct{b})$ has $\deg_{\cA}(g) = \deg_{\cA}(f)$.
This shift invariance becomes valuable when we discuss numerical optimization in Section~\ref{sec:lower_bounds}.
In addition, for any nonsingular matrix $\mtx{B} \in \R^{n \times n}$, the signomial $g$ defined by $g(\vct{x}) = f(\mtx{B}\vct{x})$ has $\deg_{[\cA\mtx{B}]}(g) = \deg_{\cA}(f)$.
These invariants are reflected in our proof techniques in Sections~\ref{sec:sig_positiv} and \ref{sec:sig_moments}, which are unaffected by changes to $(\cA, \X)$ that preserve the linear image $\cA\X \subset \R^{\cA}$ up to a translation in the range of $\cA$.

\subsection{Behavior of $\cA$-degree under multiplication}\label{subsec:submult_Adeg}

Polynomial rings enjoy a property where given two nonzero polynomials $p$ and $q$, the degree of the product $pq$ is the sum of degrees $\deg p$ and $\deg q$.
Signomial $\cA$-degree partly preserves this property.
For any two signomials $f, g$ in a common ring $\R[\cA]$, we have
\begin{equation}\label{eq:submult_Adeg}
    \deg_{\cA}(f g) \leq \deg_{\cA}(f) + \deg_{\cA}(g).
\end{equation}
However, the inequality in \eqref{eq:submult_Adeg} can be strict even when both $f$ and $g$ are nonzero.
A trivial example of strict inequality is given by $f = \e^{\vct{0}}$, which satisfies $f = f^2$ and $\deg_{\cA}(f) = 1$.
Here is a nontrivial example.

\begin{example}\label{ex:drop_Adeg}
    Consider an integer $k \geq 3$ and
    $\cA = \{0, 1, k\}$.
    Then $f(x) = \exp(x)$ has $\deg_{\cA}(f^{p}) = p$ for $1 \leq p < k$, and yet $\deg_{\cA}(f^k) = 1$. \hfill $\blacklozenge$
\end{example}

The potential for strict inequality in \eqref{eq:submult_Adeg} complicates the process of grading $\R[\cA]$ by $\cA$-degree.
However, this complication can actually be used to our advantage.
The idea is that for a nonconstant polynomial $p$, the only polynomial $q$ for which $\deg(pq) < \deg(p)$ is $q = 0$.
By contrast, there are certain support sets $\cA$, signomials $f \in \R[\cA]$, and nontrivial linear subspaces $S \subset \R[\cA]$ where $\deg_{\cA}(f g) < \deg_{\cA}(f)$ for every signomial $g \in S$.

\begin{example}\label{ex:multiplier_subspace}
    Consider $\cA = \{-1, 0, 1, 2\}$ and the signomial $f(x) = \exp(3x)$.
    Clearly $\deg_{\cA}(f) = 2$, and yet $\deg_{\cA}(g f) \leq 1$ for every signomial $g$ in
    the one-dimensional linear space $S = \{ c \exp(-x) \,:\, c \in \R\} \subset \R[\cA]$. \hfill $\blacklozenge$
\end{example}

In view of Example~\ref{ex:multiplier_subspace}, we have a need to take a signomial $f$ and describe the inclusion-maximum $A \subset \cA_{d}$ where $\deg_{\cA}(f g) \leq d$ for every signomial $g$ supported on $A$.
We denote this set by $\invsupp_{d}(f)$ and note that it can be expressed as
\begin{equation}\label{eq:def_invsupp}
    \invsupp_{d}(f) = \{ \evec \in \cA_d \,:\, \evec + \supp(f) \subset \cA_d \}.
\end{equation}
In terms of these support sets, inequality~\eqref{eq:submult_Adeg} simply tells us that if $f$ is of an $\cA$-degree $k$ strictly smaller than $d$, then $\invsupp_{d}(f)$ contains $\cA_{d-k}$.


Of course, $\cA$-degree can behave like polynomial degree in certain situations.
Here is one prominent case.

\begin{proposition}\label{prop:degree_behavior}
    Suppose $f$ is a nonconstant signomial in $\R[\cA]$.
    If all extreme points of the convex hull of $\cA_r$ are among the support of a signomial $g \in \R[\cA]_{r}$, then $\deg_{\cA}(g f) = r + \deg_{\cA}(f)$.
    In particular, the $\cA$-degree of $(\sum_{\evec\in\cA}\e^{\evec})^r f$ is equal to $r + \deg_{\cA}(f)$ whenever $f$ is nonconstant.
\end{proposition}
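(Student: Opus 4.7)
The upper bound $\deg_{\cA}(gf) \le r + \deg_{\cA}(f)$ is exactly the submultiplicativity inequality~\eqref{eq:submult_Adeg}. For the matching lower bound, write $k = \deg_{\cA}(f)$; the plan is to exhibit a specific $\gamma \in \supp(gf)$ outside $\cA_{r+k-1}$. A key preliminary observation is that any $\gamma \in \cA_{r+k}\setminus\cA_{r+k-1}$ which decomposes as $\alpha + \beta$ with $\alpha \in \cA_r$ and $\beta \in \cA_k$ must satisfy $\alpha \in \cA_r\setminus\cA_{r-1}$ and $\beta \in \cA_k\setminus\cA_{k-1}$, since otherwise $\gamma$ would already belong to $\cA_{r-1} + \cA_k = \cA_{r+k-1}$. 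Consequently $[gf]_\gamma$ depends only on the ``top pieces'' $T_r(g) := \sum_{\alpha \in \supp(g)\cap(\cA_r\setminus\cA_{r-1})} g_\alpha\,\e^\alpha$ and $T_k(f) := \sum_{\beta \in \supp(f)\cap(\cA_k\setminus\cA_{k-1})} f_\beta\,\e^\beta$.

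By Minkowski additivity, $\conv(\cA_r) = r\cdot\conv(\cA)$, so the vertices of $\conv(\cA_r)$ are exactly $\{rv : v \text{ is a vertex of } \conv(\cA)\}$. Because $f$ is nonconstant, $\cA \ne \{\vct{0}\}$, and so $\conv(\cA)$ has a vertex $v^* \ne \vct{0}$. Normalising an exposing linear functional $\phi$ at $v^*$ so that $\phi(v^*) = \max_\cA \phi = 1$, one sees $rv^* \in \cA_r\setminus\cA_{r-1}$ (any purported shorter representation would yield $r = \phi(rv^*) \le r-1$), and hence $rv^* \in \supp(T_r(g))$ by hypothesis. A generic perturbation of $\phi$ within the exposing cone additionally guarantees that $\phi$ attains a unique maximum $\beta^*$ over $\supp(T_k(f))$. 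Set $\gamma := rv^* + \beta^*$.

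The argument now hinges on two claims: \emph{(i)} $[gf]_\gamma \ne 0$, and \emph{(ii)} $\gamma \notin \cA_{r+k-1}$. For \emph{(i)}, the preliminary reduction gives $[gf]_\gamma = [T_r(g)\,T_k(f)]_\gamma$; any preimage $(\alpha,\beta) \in \supp(T_r(g))\times\supp(T_k(f))$ of $\gamma$ satisfies $\phi(\alpha) \le r$ and $\phi(\beta) \le \phi(\beta^*)$ with sum equal to $\phi(\gamma) = r + \phi(\beta^*)$, so uniqueness of the two $\phi$-maxima forces $(\alpha,\beta) = (rv^*, \beta^*)$ and hence $[gf]_\gamma = g_{rv^*}\,f_{\beta^*} \ne 0$. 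For \emph{(ii)}, suppose for contradiction $\gamma = \sum_{i=1}^{r+k-1}\eta_i$ with $\eta_i \in \cA$, and let $m = |\{i : \eta_i = v^*\}|$. The easy case $m \ge r$ allows one to cancel $r$ copies of $v^*$ against the $rv^*$ on the left, producing a representation of $\beta^*$ of length $k-1$ and contradicting $\deg_{\cA}(\beta^*) = k$.

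The remaining case $m < r$ is the principal obstacle. The naive single-functional estimate $\phi(\beta^*) \le k-1 - (r+k-1-m)\delta$ (where $\delta > 0$ is the exposing gap at $v^*$) is not by itself sharp, because the $\cA$-degree can genuinely exceed its linear-functional relaxation thanks to the integrality phenomenon of Example~\ref{ex:drop_Adeg}. Closing this gap requires a finer combinatorial argument: one coordinates the choice of $v^*$ with an optimal decomposition $\beta^* = \sum_{j=1}^k\mu_j$ (for instance, selecting a vertex $v^*$ adapted to the separation structure of $\beta^*$) so that any hypothetical decomposition of $rv^* + \beta^*$ of length $r+k-1$ with $m < r$ copies of $v^*$ can be rearranged, using the extremality of $v^*$ in $\conv(\cA)$, into an impossible representation of $\beta^*$ of length strictly less than $k$. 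The ``in particular'' clause is then routine: the multinomial expansion of $(\sum_{\evec\in\cA}\e^\evec)^r$ assigns a strictly positive coefficient (a sum of multinomial coefficients) to every $\e^{\vct{u}}$ with $\vct{u} \in \cA_r$ -- padding weight vectors with zeros as needed, using $\vct{0} \in \cA$ -- so its support is the whole of $\cA_r$ and in particular contains every vertex of $\conv(\cA_r)$.
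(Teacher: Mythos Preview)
The paper states Proposition~\ref{prop:degree_behavior} without proof, so there is no argument in the paper to compare against. Evaluating your attempt on its own merits: there is a genuine gap in claim~(ii). You handle the case $m \ge r$ correctly, but for $m < r$ you concede that the single-functional bound is insufficient and defer to an unspecified ``finer combinatorial argument.'' This is not a routine omission --- your construction of $\gamma$ can outright fail. Take $\cA = \{-1,0,1\}$, $f = \e^{-2}$ (so $k=2$), and $r=3$. Both $\pm 1$ are legitimate nonzero vertices of $\conv(\cA)$; nothing in your setup prevents choosing $v^*=1$. Then $\supp(T_k(f))=\{-2\}$ forces $\beta^*=-2$, and $\gamma = 3\cdot 1 + (-2) = 1$. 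But $1 \in \cA_1 \subset \cA_{r+k-1}=\cA_4$, so claim~(ii) is simply false for this $\gamma$: the length-four representation $1 = 1+0+0+0$ has $m=1<r=3$ and yields no contradiction whatsoever.

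Choosing $v^*=-1$ instead gives $\gamma=-5$ with $\deg_{\cA}(\e^{-5})=5=r+k$, so your instinct to ``coordinate the choice of $v^*$ with $\beta^*$'' points in the right direction. But you have supplied neither a rule for making this choice nor a proof that some rule always works, and your own reference to Example~\ref{ex:drop_Adeg} correctly signals that the LP relaxation of $\cA$-degree cannot close the gap on its own. A complete argument must either specify $v^*$ as a function of $\beta^*$ and then actually prove $rv^*+\beta^*\notin\cA_{r+k-1}$, or abandon this particular witness and find another route to exhibiting an exponent of $\cA$-degree $r+k$ in $\supp(gf)$. Note also that your proof of claim~(i) invokes the ``preliminary reduction,'' which itself presupposes $\gamma\notin\cA_{r+k-1}$; so until claim~(ii) is secured, claim~(i) is not established either.
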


\section{A Positivstellensatz}\label{sec:sig_positiv}

Throughout this section $f$ is a signomial in $\R[\cA]$, $\X$ is a compact convex subset of $\R^n$, and $G \subset \R[\cA]$ is finite.
Here we present a characterization of signomials that are positive on sets
\[
    \set{K} = \{\vct{x} \in \X \,:\, g(\vct{x}) \geq 0 \text{ for all } g \text{ in } G \}.
\]
Note that such sets are nonconvex in general, because there are signomials that are not concave.
One example of a nonconvex signomial constraint is $\sum_{\evec\in A}\e^{\evec}(\vct{x}) - 1 \geq 0$ where $A$ contains two nonzero vectors in $\R^n$.

In later sections, the following result will be used to develop hierarchies of successively stronger convex relaxations for approaching $f_{\set{K}}^\star = \inf_{\vct{x}\in \set{K}} f(\vct{x})$ from below and above (\S \ref{sec:lower_bounds} and \S \ref{sec:upper_bounds} respectively).

\begin{theorem}\label{thm:newPositiv}
    If $f$ is positive on $\set{K}$, then there exists an $r \in \N$ for which 
    \begin{equation}\label{eq:positivDecompClaim}
        \textstyle\left(\sum_{\evec\in\cA}\e^{\evec}\right)^r f = \lambda_f + \sum_{g \in G} \lambda_g \cdot g,
    \end{equation}
    where $\lambda_f \in \R[\cA]$ is $\X$-SAGE and the $\lambda_g \in \R[\cA]$ are posynomials.
\end{theorem}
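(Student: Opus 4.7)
The plan is to reduce the theorem to the Dickinson--Povh Positivstellensatz for homogeneous polynomials on $\R^{\cA}_+$ subject to possibly infinitely many homogeneous polynomial inequalities \cite{DP2014}. The reduction is engineered by the exponential embedding $\phi : \vct{x} \mapsto (\e^{\evec}(\vct{x}))_{\evec \in \cA}$ into $\R^{\cA}_{++}$, which allows signomials in $\R[\cA]_{d}$ to be identified with homogeneous polynomials of degree $d$ in fresh indeterminates $\vct{t} = (t_{\evec})_{\evec \in \cA}$.

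First I would construct, for each $h \in \R[\cA]_{d}$, a homogeneous polynomial $\tilde h \in \R[\vct{t}]$ of degree $d$ by writing each basis function $\e^{\bevec}$ with $\bevec \in \supp(h) \subset \cA_{d}$ as $\vct{t}^{\vct{w}}$ for some $\vct{w} \in \N^{\cA}$ satisfying $\cA^\intercal \vct{w} = \bevec$ and $\langle\vct{1},\vct{w}\rangle = d$; such a $\vct{w}$ exists because $\bevec \in \cA_{d}$ and $\vct{0} \in \cA$, and its non-uniqueness will be rendered harmless by the constraints below. The key identity $\tilde h \circ \phi = h$ holds, and the homogenization factor $(\sum_{\evec}\e^{\evec})^{r}$ corresponds to $(\sum_{\evec} t_{\evec})^{r}$, matching exactly the multiplier that appears in the Dickinson--Povh representation.

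Next I would describe a lifted feasible set $\tilde{\set{K}} \subset \R^{\cA}_{+}$ by three families of homogeneous polynomial inequalities: the constraints $\tilde g(\vct{t}) \geq 0$ for each $g \in G$; the monomial identities $\vct{t}^{\vct{u}} = \vct{t}^{\vct{w}}$ (as two opposite inequalities) for all $\vct{u},\vct{w} \in \N^{\cA}$ with $\cA^\intercal \vct{u} = \cA^\intercal \vct{w}$ and $\langle\vct{1},\vct{u}\rangle = \langle\vct{1},\vct{w}\rangle$, which force $\vct{t}$ onto the toric variety generated by $\phi(\R^n)$; and a countable family of rational support-function bounds of the form $a\vct{t}^{\vct{u}} - b\vct{t}^{\vct{w}} \geq 0$ obtained by clearing denominators in inequalities $\langle\vct{v},\cA\vct{x}\rangle \leq s_{\X}(\vct{v})$ for $\vct{v} \in \Q^{\cA}$, where $s_{\X}(\vct{v}) \coloneqq \sup_{\vct{x}\in\X}\langle\vct{v},\cA\vct{x}\rangle$. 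Compactness of $\cA\X$ makes this family dense enough that $\tilde{\set{K}}$ agrees with the closure of $\phi(\set{K})$ on $\R^{\cA}_{++}$, and $\tilde f$ is positive on $\tilde{\set{K}}\setminus\{\vct{0}\}$. Applying Dickinson--Povh then yields an $r \in \N$ and a finite representation
\[
  \Bigl(\sum_{\evec\in\cA} t_{\evec}\Bigr)^{r} \tilde f(\vct{t}) = \sigma_0(\vct{t}) + \sum_{g \in G}\sigma_g(\vct{t})\,\tilde g(\vct{t}) + \sum_{j} \sigma_j(\vct{t})\,p_j(\vct{t}),
\]
in which $\sigma_0$, $\sigma_g$, and $\sigma_j$ are coefficient-wise nonnegative homogeneous polynomials and the $p_j$ are drawn from the remaining two families of constraints.

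The final step is to pull this identity back through $\phi$. Coefficient-wise nonnegative polynomials in $\vct{t}$ become posynomials in $\R[\cA]$, so each $\sigma_g \circ \phi$ is a posynomial $\lambda_g$, and the monomial-identity contributions collapse to zero under $\phi$ (absorbing any ambiguity in the homogeneous lift). The support-function contributions $\sigma_j \circ \phi$ do carry negative terms, but the support-function data used to build the $p_j$ is exactly the data that appears in the characterization of $\X$-AGE functions in Proposition \ref{prop:represent_age_cones}; grouping each such contribution with an appropriate share of $\sigma_0 \circ \phi$ yields a finite sum of $\X$-AGE functions, i.e., an $\X$-SAGE signomial $\lambda_f$. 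The main obstacle is this last matching between Dickinson--Povh multipliers and the relative-entropy / support-function witnesses that certify $\X$-AGE membership, and it is precisely because $s_{\X}$ can take irrational values (and $\cA$ can have irrational entries) that the infinite-constraint version of Dickinson--Povh is needed rather than its classical finite-constraint counterpart.
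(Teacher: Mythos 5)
Your route is essentially the paper's: polynomialize, cut out $\exp\cA\X$ inside $\R^{\cA}_+$ by a countable family of homogeneous binomial inequalities coming from integral-normal halfspaces containing the compact convex set $\cA\X$, invoke the Dickinson--Povh \positiv{} (implicitly via its countable-to-finite reduction), and signomialize back. Two points deserve attention. First, the ``main obstacle'' you flag at the end is not an obstacle at all, and leaving it unresolved is the one real gap in your write-up: each support-function constraint $p_j$ signomializes to an $\X$-nonnegative signomial with at most two terms, which is $\X$-AGE \emph{by definition} (at most one negative term); a posynomial times an $\X$-AGE function is $\X$-SAGE, and sums of $\X$-SAGE functions are $\X$-SAGE. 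Hence $\lambda_f = \sigma_0\circ\phi + \sum_j(\sigma_j\circ\phi)(p_j\circ\phi)$ is directly $\X$-SAGE --- no matching against relative-entropy witnesses or redistribution of $\sigma_0$ is required. Second, Dickinson--Povh needs $\tilde f$ positive on the full conic set $\tilde{\set{K}}\setminus\{\vct{0}\}$, not merely on the slice $t_{\vct{0}}=1$; you assert this without proof. The paper establishes it by including, for every $\evec$, \emph{both} the upper bound $t_{\evec}\leq t_{\vct{0}}\max_{\X}\e^{\evec}$ (so $t_{\vct{0}}=0$ forces $\vct{t}=\vct{0}$) and the lower bound $t_{\evec}\geq t_{\vct{0}}\min_{\X}\e^{\evec}$ (so $t_{\vct{0}}>0$ forces $\vct{t}>\vct{0}$), and then dehomogenizing via $\vct{t}\mapsto\vct{t}/t_{\vct{0}}$. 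These bounds are special cases of your support-function family (take $\vct{v}=\pm\vct{\delta}_{\evec}$), but the verification must be made explicit.

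A smaller remark: your second family of constraints (the binomial identities $\vct{t}^{\vct{u}}=\vct{t}^{\vct{w}}$) is harmless, since those identities hold on $\phi(\R^n)$ and their contributions vanish on pullback, but it does not do the work you attribute to it. For irrational $\cA$ (e.g.\ $\cA=\{0,1,\sqrt{2}\}$) the only integer relations $\cA^\intercal\vct{u}=\cA^\intercal\vct{w}$ are the trivial ones, so these constraints cut out all of $\R^{\cA}$; $\exp(\cA\R^n)$ is not the positive part of a toric variety in general. This is exactly why the paper relies on compactness of $\cA\X$ and the halfspace description rather than on toric relations, and why it can dispense with the rationality assumptions of the earlier \positive{}.
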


Note how the theorem requires $f$ to be \textit{positive} on $\set{K}$ but only guarantees an identity that implies \textit{nonnegativity} on $\set{K}$.
The gap between $\set{K}$-positive signomials and $\set{K}$-nonnegative signomials is important in optimization, as it makes the difference between finite versus asymptotic convergence of our lower bounds.
To improve one's chances of finding an identity like \eqref{eq:positivDecompClaim} when $f_{\set{K}}^\star = 0$, the multipliers $(\lambda_g)_{g \in G}$ can be taken as $\X$-SAGE signomials rather than merely posynomials (we make this change in Section \ref{sec:lower_bounds}).
There is also an interesting case when $G = \emptyset$, where the representation from Theorem~\ref{thm:newPositiv} uses no multipliers whatsoever.

\begin{corollary}\label{cor:newPositiv}
    If $f$ is positive on $\X$, then there exists a natural number $r$ where the signomial $(\sum_{\evec \in \cA} \e^{\evec})^r f$ is $\X$-SAGE.
\end{corollary}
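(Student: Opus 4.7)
The plan is to derive this corollary as an immediate specialization of Theorem~\ref{thm:newPositiv}. The corollary is nothing more than the content of the theorem applied with $G = \emptyset$, so the proof reduces to a bookkeeping exercise about what happens to the theorem's hypotheses and conclusion when no inequality constraints are present.

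First, I would verify that with $G = \emptyset$ the set $\set{K}$ defined in the theorem becomes exactly $\X$. Indeed, by the definition $\set{K} = \{\vct{x} \in \X \,:\, g(\vct{x}) \geq 0 \text{ for all } g \in G\}$, the constraint ``for all $g \in G$'' is vacuously satisfied when $G$ is empty, so every $\vct{x} \in \X$ lies in $\set{K}$. Thus the hypothesis ``$f$ is positive on $\X$'' in the corollary is precisely the hypothesis ``$f$ is positive on $\set{K}$'' in the theorem for this choice of $G$.

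Next, I would apply Theorem~\ref{thm:newPositiv} directly. The theorem supplies an integer $r \in \N$ and a decomposition
\[
    \left(\sum_{\evec \in \cA} \e^{\evec}\right)^{\! r} f = \lambda_f + \sum_{g \in G} \lambda_g \cdot g,
\]
where $\lambda_f \in \R[\cA]$ is $\X$-SAGE. Since $G = \emptyset$, the sum $\sum_{g \in G} \lambda_g \cdot g$ is the empty sum, which equals $0$. Consequently the identity collapses to $(\sum_{\evec \in \cA} \e^{\evec})^r f = \lambda_f$, and this right-hand side is $\X$-SAGE by construction. This is exactly the conclusion of the corollary.

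There is no genuine obstacle to navigate here, since the entire content is packaged inside the already-stated Theorem~\ref{thm:newPositiv}. The only thing worth noting is that the hypothesis in Theorem~\ref{thm:newPositiv} that $\X$ be a compact convex subset of $\R^n$ is inherited automatically for the corollary, so its conclusion holds under the standing assumptions of Section~\ref{sec:sig_positiv} without any extra work.
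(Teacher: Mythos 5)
Your proposal is correct and is exactly how the paper treats this corollary: it is stated as the immediate specialization of Theorem~\ref{thm:newPositiv} to $G = \emptyset$, where $\set{K} = \X$ and the empty sum of multipliers leaves $(\sum_{\evec\in\cA}\e^{\evec})^r f = \lambda_f$ with $\lambda_f$ being $\X$-SAGE. No further comment is needed.
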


Our proof of Theorem~\ref{thm:newPositiv} is presented in Section~\ref{subsec:proof_positiv}; it relies on two black-box lemmas, which are proven in Sections~\ref{subsec:proof_lem_dp2sage} and \ref{subsec:proof_lem_polynomialize_X}.
The second of these lemmas contains our main technical innovation outside the use of signomial rings, and we provide some extra commentary on the lemma following its proof (see Remark~\ref{rem:discuss_QX_ideal}).

\subsection{Comparison to existing \positive{}}\label{subsec:compare_positiv}

We begin by paraphrasing two existing SAGE \positive{} in the language of signomial rings.
The first such \positiv{} was proven in \cite{CS2016} when the concept of SAGE certificates was introduced.
To state the result we use
\begin{equation}\label{eq:def_RqG}
    R_q(G) \coloneq \left\{ \textstyle\prod_{i=1}^q g_i \,:\, g_1,\ldots,g_q \in \{1\}\cup G\right\} 
\end{equation}
 to denote the set of all products of $q$ (possibly nondistinct) signomials from $G \cup \{1\}$.
 
\begin{theorem}[\cite{CS2016}]\label{prop:ord_sage_constr_positiv}  
    Suppose $\{f\}\cup G \subset \R[\cA]$ for exponents $\emat \subset \Q^n$.
    Further, assume that $G$ explicitly includes signomials $\{U - \e^{\evec},\e^{\evec} - L\}_{\evec \in \cA}$ for some positive constants $U, L$, so that $\X = \{ \vct{x} \,:\, U \geq \e^{\evec}(\vct{x}) \geq L \, \forall \, \evec \in \cA \}$ is compact.
    If $f$ is positive on $\set{K}$, then there exists a natural number $q$ and a family of $\R^n$-SAGE signomials $( \lambda_h )_{h \in R_q(G)} \subset \R[\cA]$ that satisfy 
        $f =  \textstyle\sum_{h \in R_q(G)} \lambda_h \cdot h$.
\end{theorem}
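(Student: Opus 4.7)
My plan is to reduce the problem to a classical polynomial \positiv{} via the rationality of $\cA$, and then convert the resulting sum-of-squares certificates into $\R^n$-SAGE certificates using the box-constraint multipliers that $G$ supplies by hypothesis. Concretely, fix a common denominator $N \in \N$ with $N\cA \subset \Z^n$ and consider the bijection $\R^n \leftrightarrow \R^n_{++}$ given by $\vct{t} = \exp(\vct{x}/N)$. Under this substitution every basis function $\e^{\evec}(\vct{x})$ becomes the Laurent monomial $\vct{t}^{N\evec}$, so signomials in $\R[\cA]$ pull back to Laurent polynomials in $\vct{t}$. Compactness of $\X$ together with the fact that $G$ contains upper and lower bounds on every monomial $\e^{\evec}$ for $\evec \in \cA$ forces $\spann(\cA) = \R^n$, hence the image $\tilde{\set{K}}$ of $\set{K}$ lies inside a closed box $[L_0,U_0]^n \subset \R^n_{++}$. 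After multiplying through by a suitable monomial to clear negative exponents, one obtains a polynomial version of $\tilde f$ strictly positive on the compact basic semi-algebraic set $\tilde{\set{K}}$.

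I would then apply Schm\"udgen's \positiv{} on $\tilde{\set{K}}$ to obtain a representation of a monomial multiple of $\tilde f$ as a sum indexed by subsets $I$ of the constraint indices of cardinality at most some fixed $q$, in which each summand has the form $\sigma_I \prod_{i \in I} \tilde g_i'$ with $\sigma_I$ a sum of squares and $\tilde g_i'$ a monomial-cleared pullback of $g_i \in G$. Pulling the identity back through $\vct{t} \mapsto \exp(\vct{x}/N)$ gives a signomial identity $f = \sum_{h \in R_q(G)} \lambda_h \cdot h$ in which each $\lambda_h$ is a nonnegative signomial in $\R[\cA]$; any auxiliary monomial factors generated by the clearing step are posynomials that can be absorbed either into the $\lambda_h$'s or into the products $h$ at the cost of further enlarging $q$.

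The main obstacle is then showing that each nonnegative signomial $\lambda_h$ produced this way can actually be written as an $\R^n$-SAGE signomial, since nonnegative signomials are not $\R^n$-SAGE in general. My plan here is to invoke a Handelman--P\'olya-type multiplier argument on the box: because each $\sigma_I$ corresponds to a polynomial strictly positive on $[L_0,U_0]^n$, multiplying it by enough box-constraint factors $U - \e^{\evec}$ and $\e^{\evec} - L$ (all present in $G$) forces the result to be a posynomial, which is trivially $\R^n$-SAGE. These extra multipliers can be absorbed into the product $h \in R_q(G)$ by enlarging $q$ once more. This reduction from SOS to SAGE via products of box constraints is the combinatorial crux of the proof; the remaining work is bookkeeping to verify that every signomial produced stays in $\R[\cA]$, which follows from the fact that all exponents involved live in the integer cone $\N \cA$ generated by $\cA$.
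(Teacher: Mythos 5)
The paper does not actually prove Theorem~\ref{prop:ord_sage_constr_positiv} — it is quoted from \cite{CS2016} — so I am assessing your argument on its own terms. The decisive gap is the final step, where you convert the Schm\"udgen multipliers into SAGE signomials. Two things go wrong there. First, the $\sigma_I$ produced by Schm\"udgen's theorem are sums of squares and hence only \emph{nonnegative}: they may vanish at points of $[L_0,U_0]^n$, so your premise that each $\sigma_I$ is strictly positive on the box is false, and no amount of multiplication by constraint factors will then yield nonnegative coefficients (Handelman-type representations certify strict positivity; perturbing $f$ to $f-\epsilon$ only repairs the $I=\emptyset$ term). Second, even for a strictly positive $\sigma_I$, the Handelman--P\'olya mechanism you invoke requires \emph{affine} constraints cutting out a polytope in the ambient variables; the available constraints $L\le \vct{t}^{N\evec}\le U$ are monomial, not affine, in $\vct{t}$, and the coordinate box constraints $L_0\le t_i\le U_0$ are not pullbacks of elements of $G$ (the $\vct{\delta}_i$ need not lie in $\cA$), so using them as Schm\"udgen or Handelman factors produces products $h\notin R_q(G)$. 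Since sums of squares of signomials are in general not SAGE (the SOS and SONC/SAGE cones are incomparable), this conversion is the real mathematical content of the theorem and cannot be treated as bookkeeping. Two smaller unaddressed points: after clearing denominators, the set cut out by the cleared constraints can acquire boundary points of the orthant (e.g.\ the origin, when exponent vectors have mixed signs) where the cleared objective vanishes, so the strict-positivity hypothesis of Schm\"udgen's theorem on a compact basic closed set is not verified; and Schm\"udgen gives no control over the supports of the $\sigma_I$, so your claim that the resulting $\lambda_h$ lie in $\R[\cA]$ does not follow.

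The repair — and the route taken in \cite{CS2016} as well as in this paper's own Theorem~\ref{thm:newPositiv} — is to avoid SOS multipliers entirely and reduce instead to a polynomial \positiv{} over the nonnegative orthant whose multipliers are already polynomials with \emph{nonnegative coefficients} (P\'olya/Dickinson--Povh type, cf.\ Theorem~\ref{thm:complete:dpFinite}); rationality of $\cA$ is what makes $\exp(\cA\R^n)$ describable by binomial relations so that this reduction is exact. Nonnegative-coefficient multipliers signomialize to posynomials, which are trivially $\R^n$-SAGE, and two-term constraint polynomials signomialize to AGE functions — exactly the mechanism of Lemma~\ref{lem:dp2sage}. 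Your reduction via rational exponents and compactness is the right opening move, but the certificate you reduce to must be one whose multipliers are already posynomial-shaped, not sums of squares.
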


Theorem~\ref{prop:ord_sage_constr_positiv} does not involve conditional SAGE.
We have simply phrased it to emphasize that if $(f,G)$ satisfy its hypothesis, then $(f,G,\X)$ satisfy the hypothesis of Theorem~\ref{thm:newPositiv} for the indicated choice of $\X$.
Our Theorem~\ref{thm:newPositiv} is qualitatively different from the earlier Theorem~\ref{prop:ord_sage_constr_positiv} in that the former does not require taking products of constraint functions.
This distinction is of practical importance.

The next \positiv{} was proven by Wang et al.~\cite{WJYP2020} shortly after the introduction of conditional SAGE certificates.
Its scope is limited to problems where $G = \emptyset$ (i.e., $\set{K}=\X$), but is nevertheless distinguished in how its conclusion is independent of the representation of $\X$.

\begin{theorem}[\cite{WJYP2020}]\label{prop:first_cond_sage_positiv}  
    Suppose the exponents $\evec \in \cA$ are rational and that $f$ has $\cA$-degree one.
    If $f$ is positive on $\X$, then there exists a natural number $r$ for which $(\sum_{\evec\in\cA}\e^{\evec})^r f$ is $\X$-SAGE.
\end{theorem}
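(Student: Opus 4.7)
The plan is to reduce to a classical polynomial Positivstellensatz by exploiting rationality through ``polynomialization,'' and then translate the resulting decomposition back to signomials while preserving the AGE structure. First, I would clear denominators: pick $N \in \N$ with $N\cA \subset \Z^n$; the affine substitution $\vct{x} \mapsto \vct{x}/N$ preserves compactness and convexity of $\X$, preserves the positivity of $f$, and transports the desired identity verbatim into the ring generated by $N\cA$. We may therefore assume $\cA \subset \Z^n$. Next, substitute $\vct{t} = \exp(\vct{x})$, so that each $\e^{\evec}$ becomes the Laurent monomial $\vct{t}^{\evec}$, the signomial $f$ becomes a Laurent polynomial $F(\vct{t})$ positive on the compact set $\set{T} = \exp(\X) \subset \R_{++}^n$, and the multiplier $\sum_{\evec\in\cA}\e^{\evec}$ becomes the positive Laurent polynomial $Q(\vct{t}) = \sum_{\evec\in\cA}\vct{t}^{\evec}$. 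Under this polynomialization, the cone of $\X$-SAGE signomials corresponds (via the identification of a Laurent monomial with its exponent vector) to the cone of Laurent polynomials that decompose into summands with at most one negative coefficient and nonnegative on $\set{T}$.

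Next, I would produce the desired decomposition on the polynomial side: for some $r \in \N$, $Q^r F$ is a sum of ``$\set{T}$-AGE'' Laurent polynomials supported on $\cA_{r+1}$. The central observation, which is Proposition~\ref{prop:represent_age_cones} transported through polynomialization, is that a negative monomial $c_\bevec \vct{t}^\bevec$ combined with a posynomial $\sum_\evec \mu_\evec \vct{t}^\evec$ is $\set{T}$-nonnegative whenever $\bevec$ is a convex combination of the accompanying $\evec$'s with coefficients satisfying a relative-entropy inequality that invokes the support function of $-\cA\X$. The expansion of $Q^r F$ distributes an abundant supply of such posynomial ``witnesses'' over every exponent in $\cA_{r+1}$. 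Choosing $r$ large enough ensures that for each negative monomial in $Q^r F$, enough witnesses with convexly enclosing exponents are available to absorb it; compactness of $\set{T}$ (equivalently, boundedness of the support function of $-\cA\X$) together with the uniform positive lower bound of $F$ on $\set{T}$ allow a single $r$ to work simultaneously for all negative monomials. This is a Pólya-style density result tailored to the AGE cone, morally of the type found in Reznick's agiforms and the SAGE-on-convex-sets framework of~\cite{MCW2019}.

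Finally, I would translate the decomposition back through the substitutions $\vct{t} = \exp(\vct{x})$ and $\vct{x} \mapsto N\vct{x}$. Each $\set{T}$-AGE Laurent atom pulls back to an $\X$-AGE signomial supported in $\cA_{r+1}$, the polynomial $Q^r F$ pulls back to $(\sum_{\evec\in\cA}\e^{\evec})^r f$, and the decomposition certifies $\X$-SAGE membership.

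The principal obstacle is the second step: producing an AGE-structured decomposition of $Q^r F$ rather than a generic nonnegativity certificate. Off-the-shelf polynomial Positivstellens\"atze (Schm\"udgen, Putinar, Handelman) yield decompositions in SOS or product-of-constraint atoms that do not respect the ``at most one negative monomial'' structure required by AGE. The work lies in combining Pólya-style multiplier padding with a pointwise AM-GM argument on $\set{T}$, in showing that a uniform $r$ suffices across all negative monomials, and in carrying out the combinatorial bookkeeping that matches expansion terms of $Q^r$ to valid convex-combination witnesses for each negative exponent. Rationality is essential here, since the reduction to Laurent polynomials breaks down for irrational exponents --- which is precisely the limitation that Theorem~\ref{thm:newPositiv} later removes.
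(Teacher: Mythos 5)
Your first and third steps are fine, modulo a direction slip: to make the exponents integral you want $g(\vct{x}) = f(N\vct{x})$, whose exponents are $N\evec \in \Z^n$ and whose domain is $\X/N$; the substitution $\vct{x}\mapsto\vct{x}/N$ produces exponents $\evec/N$, which moves the wrong way. The genuine gap is your second step. You correctly diagnose that off-the-shelf \positive{} do not respect the AGE structure, but you then assert, rather than prove, the one claim that constitutes the entire theorem: that for $r$ large enough, $Q^r F$ splits into $\set{T}$-AGE atoms. ``Choosing $r$ large enough ensures that \dots enough witnesses \dots are available to absorb it'' is precisely what must be established, and no classical P{\'{o}}lya-type argument delivers it here. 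P{\'{o}}lya's theorem concerns homogeneous forms positive on the entire simplex (equivalently, by homogeneity, the whole nonnegative orthant minus the origin), whereas your $F$ is positive only on the compact body $\set{T}=\exp(\X)$, and your Laurent polynomial in $\vct{t}\in\R^n$ is not even homogeneous, so there is no simplex to work over. Moreover, the $\X$-AGE condition of Proposition~\ref{prop:represent_age_cones} couples the coefficients to the support function of $-\cA\X$ through a relative-entropy inequality, and the combinatorics of expanding $Q^r$ gives no control over that inequality without substantial additional argument.

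The way this gap is actually closed --- in \cite{WJYP2020}, and in this paper's more general Theorem~\ref{thm:newPositiv} --- is structurally different from your sketch. One lifts to $\R^{\cA}$, introducing a coordinate $y_{\evec}$ for each $\evec\in\cA$; because $\vct{0}\in\cA$, the \polation{} $p$ of $f$ is a \emph{homogeneous} polynomial. One then describes $\exp\cA\X$ by homogeneous polynomial inequalities each having at most two terms (for rational $\cA$ these include the toric binomial relations; in general, Lemma~\ref{lem:polynomialize_X}), and invokes the Dickinson--Povh \positiv{} over the resulting subset of $\R^{\cA}_+$. The certificate it returns --- nonnegative-coefficient multipliers times two-term constraints --- signomializes to posynomials times $\X$-AGE functions, hence to an $\X$-SAGE decomposition (Lemma~\ref{lem:dp2sage}). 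The AGE structure is thus \emph{inherited} from the two-term form of the constraint descriptions, not produced by a density or absorption argument. To salvage your route you would need either to supply the missing P{\'{o}}lya-style density theorem for AGE cones over general compact convex bodies (itself a significant open contribution), or to reroute through the homogeneous lift to $\R^{\cA}$ as above.
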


Our Theorem~\ref{thm:newPositiv} naturally generalizes Wang at al.'s Theorem~\ref{prop:first_cond_sage_positiv} to the constrained setting, and in fact our proof of Theorem \ref{thm:newPositiv} draws much inspiration from \cite{WJYP2020}.
The comparison between Wang et al.'s Theorem~\ref{prop:first_cond_sage_positiv} and our Corollary~\ref{cor:newPositiv} is best illustrated with an example.

\begin{example}\label{ex:our_cor_vs_wjyp}
    Return to the signomials from Example~\ref{ex:consider_different_rings}.
    Let $M_{ij}$ denote the entries of the matrix $\mtx{M}$ so the signomial
    $f = \e^{\vct{1}} + \textstyle\sum_{i,j=1}^n M_{ij}\e^{[\vct{\delta}_i + \vct{\delta}_j]}$
    is positive on $\X$.
    If we want a certificate that $f$ is nonnegative over $\X$, then Theorem~\ref{prop:first_cond_sage_positiv} says it suffices to look for $\X$-SAGE decompositions of functions
    $\mathcal{L}_r = \left(\e^{\vct{0}} + \e^{\vct{1}} + {\textstyle\sum_{i\leq j}^n \e^{[\vct{\delta}_i + \vct{\delta}_j]}}\right)^r f$.
    Since the number of terms in $\mathcal{L}_r$ grows as $O(n^{2r})$, the sizes of the REPs used when searching for the $\X$-SAGE decompositions can scale as rapidly as $O(n^{4r})$.
    By contrast, Corollary \ref{cor:newPositiv} says it suffices to look for $\X$-SAGE decompositions of functions
    $\mathcal{L}_r' = \left(\sum_{\evec\in\cA}\e^{\evec}\right)^r f$ where $\cA$ is \textit{any} set for which $f$ belongs to $\R[\cA]$.
    In particular, we can use $\cA = \{\vct{0}\}\cup\{\vct{\delta}_i\}_{i=1}^n$, so the number of terms in $\mathcal{L}_r'$ would grow as only $O(n^r) \ll O(n^{2r})$.
    Corollary \ref{cor:newPositiv} therefore justifies a whole family of convergent REP relaxation hierarchies with different efficiency profiles as the hierarchy parameter increases. \hfill $\blacklozenge$
\end{example}

    Besides the comparisons we have drawn so far, we make no requirement that the exponents $\cA$ are rational.
    The distinction between rational and irrational exponents has some significance.
    In 2008, Delzell studied the extent to which P{\'{o}}lya's theorem (for homogeneous polynomials positive on the simplex) generalizes to signomials in $\R[\R^n]$ \cite{Delzell2008}.
    Using the convention of signomials as functions $\vct{t} \mapsto \sum_{\evec} c_{\evec}\vct{t}^{\evec}$, \cite{Delzell2008} showed that the bivariate signomial $f(\vct{t}) = t_1^{2} + t_2^2 - t_1^{1+\epsilon}t_2^{1-\epsilon}$ is positive on $\R^2_{++}$ when $\epsilon \in (-1,1)$, and yet when $\epsilon$ is irrational, there exists no ``homogeneous'' signomial $g \in \R[\R^n]$ for which $g f$ has nonnegative coefficients.
    That is, it is impossible to obtain a P{\'{o}}lya-like \positiv{} to certify global nonnegativity of a signomial with general irrational exponents.
    Our results show that under a different model of signomial rings and a compactness assumption, $\X$-SAGE certificates characterize signomials positive on $\X$ even when the exponents are irrational.

\subsection{Proof of Theorem \ref{thm:newPositiv}}\label{subsec:proof_positiv}

Our proof works by mapping a signomial problem to a polynomial problem, applying a polynomial \positiv{}, and then mapping back to signomials.
As a first step long this path, we shall call a polynomial $p$ a \textit{\struc{\polation{}}} of $f$ if $f(\vct{x}) = p(\Exp \cA \vct{x})$ for all $\vct{x}$ in $\R^n$.
Note that every signomial $f$ has a homogeneous \polation{} of degree $\deg_{\cA}(f)$ (since $\vct{0} \in \cA$).
Henceforth, we assume all \polation{}s are homogeneous.

\begin{example}\label{ex:polynomialize}
    Consider the univariate case $\cA = [1/4;\, 1/2;\, 1/3;\, 0] \in \R^{4 \times 1}$.
    The signomial $f(x) = \exp(x)$ admits several \polation{}s, among them $p_1(\vct{y}) = y_1^2 y_2$ and $p_2(\vct{y}) = y_3^3$.
    Note that $p_1 \cong p_2$ on the variety $\{ \vct{y} \in \R^4 \,:\, y_1^2 = y_2, y_2^2 = y_3^3\}$.  
    \hfill $\blacklozenge$
\end{example}

The above example suggests a signomial ring $\R[\cA]$ is equivalent to the ring of polynomials on $\R^{\cA}$, modulo a suitable binomial ideal to capture the relationships between $\evec,\bevec\in \cA$.
In order to interpret a signomial ring in this way, we need $\Exp(\cA\R^n)$ to be the intersection of a toric variety with a positive orthant.
By considering $\cA = \{ 1, \sqrt{2}, 0\}$ we see that this cannot be the case in general.
This provides one example of how fully general signomial rings are resistant to techniques from traditional algebraic geometry.
However, the differences between signomial and polynomial rings are less pronounced when considering these functions only over compact sets.
Specifically, by restricting our attention to signomial nonnegativity on compact sets, we are able to prove Theorem~\ref{thm:newPositiv} by appeal to the following results of Dickinson and Povh.

\begin{theorem}[\cite{DP2014}]\label{thm:complete:dpFinite}  
    Let $p$ be a homogeneous polynomial on $\R^{\cA}$, and let $Q$ be a finite set of homogeneous polynomials on $\R^{\cA}$ that includes the constant polynomial $\vct{y} \mapsto 1$.
    If $p$ is positive on $\{ \vct{y} \in \R^{\cA}_+: q(\vct{y}) \geq 0 \text{ for all } q \in Q\}\setminus\{\vct{0}\} $,
    then for some $r \in \N$ there exist homogeneous polynomials $\{ \mu_q\}_{q \in Q}$ with nonnegative coefficients such that $(\sum_{\evec \in \cA} y_{\evec})^r p(\vct{y}) = \sum_{q\in Q} \mu_q(\vct{y}) q(\vct{y})$. 
\end{theorem}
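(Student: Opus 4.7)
The plan is to reduce the statement to the Dickinson--Povh Positivstellensatz for homogeneous polynomials on $\R^{\cA}_+$ (Theorem \ref{thm:complete:dpFinite}) via the substitution $y_{\evec} = \e^{\evec}(\vct{x})$, and then to pull the resulting polynomial identity back to a signomial identity. Since $\vct{0} \in \cA$, every signomial $h \in \R[\cA]$ of $\cA$-degree $d_h$ admits a homogeneous polynomialization $P_h$ of degree $d_h$ on $\R^{\cA}$ with $h(\vct{x}) = P_h(\exp(\cA\vct{x}))$, so $f$ and all $g \in G$ can be simultaneously polynomialized; the extra coordinate $y_{\vct{0}}$ serves as a natural homogenizing variable.

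The first step is a \emph{polynomialization-of-$\X$} lemma producing a family $Q_{\X}$ of homogeneous polynomials on $\R^{\cA}$ such that positivity of $f$ on $\X$ forces $P_f$ to be positive on $\{y \in \R^{\cA}_+ : q(y) \geq 0 \text{ for all } q \in Q_{\X}\cup\{P_g\}_{g \in G}\}\setminus\{\vct{0}\}$. Convex compactness of $\X$ must be encoded by homogeneous polynomial inequalities on $\R^{\cA}_+$ without appealing to binomial equalities $y_\evec y_\bevec = y_{\evec+\bevec}$, which cease to be polynomial once the exponents are irrational. A natural route is to exploit convexity of $\X$ via weighted arithmetic--geometric mean inequalities applied to the support function of $\cA\X$, producing inequalities whose pullbacks along $y = \exp(\cA\vct{x})$ belong to the $\X$-AGE cone (cf. Proposition \ref{prop:represent_age_cones}).

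With $Q_\X$ in hand, the next step is to apply Theorem \ref{thm:complete:dpFinite} to $P_f$ on the relevant semi-algebraic set. If $Q_\X$ is finite this is direct; otherwise a compactness argument -- homogeneity of everything in sight bounds the degree of the sought representation, reducing to a finite subfamily -- suffices. The theorem then yields an integer $r$ together with homogeneous polynomials $(\mu_q)_{q \in Q_\X}$, $(\mu_g)_{g \in G}$, and $\mu_0$ with nonnegative coefficients satisfying
\begin{equation*}
    \textstyle\left(\sum_{\evec\in\cA} y_{\evec}\right)^{r} P_f(y) \;=\; \mu_0(y) + \sum_{g \in G} \mu_g(y)\, P_g(y) + \sum_{q \in Q_{\X}} \mu_q(y)\, q(y).
\end{equation*}
Pulling back via $y = \exp(\cA\vct{x})$, each $\mu_g(\exp(\cA\vct{x}))$ is automatically a posynomial $\lambda_g$, since a homogeneous polynomial with nonnegative coefficients evaluated at $y_\evec = \e^\evec(\vct{x})$ is a finite nonnegative combination of basis functions $\e^{\bevec}$.

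Combining the $\mu_0$ and $\mu_q \cdot q$ terms into a single $\X$-SAGE signomial $\lambda_f$ requires a complementary \emph{Dickinson--Povh to $\X$-SAGE conversion} lemma showing that those pullbacks lie in $\csage_{\X}(\cdot)$ -- which is why the $Q_{\X}$ in the first step has to be designed in tandem with the AGE structure. The principal obstacle is exactly this polynomialization-of-$\X$ lemma: $Q_\X$ must simultaneously be tight enough to capture the geometry of the transcendentally-defined set $\exp(\cA\X) \subset \R^{\cA}_+$ (so that $P_f > 0$ on the semi-algebraic set) and structured enough that the AGE cone absorbs its pullbacks after multiplication by arbitrary nonnegative-coefficient homogeneous polynomials. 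Everything else is bookkeeping: routine verification of $\cA$-degrees using Proposition \ref{prop:degree_behavior}, and reassembly of the pulled-back identity into the form \eqref{eq:positivDecompClaim}.
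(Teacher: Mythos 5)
You have proved the wrong statement, and the argument is circular with respect to the one you were asked to prove. The target is Theorem~\ref{thm:complete:dpFinite} itself: a purely polynomial \positiv{} of Dickinson--Povh type, asserting that a homogeneous $p$ positive on $\{\vct{y}\in\R^{\cA}_+ : q(\vct{y})\geq 0 \text{ for all } q\in Q\}\setminus\{\vct{0}\}$ admits a representation $(\sum_{\evec\in\cA} y_{\evec})^r p=\sum_{q\in Q}\mu_q\, q$ with nonnegative-coefficient homogeneous multipliers. Its hypotheses and conclusion involve no signomials, no convex set $\X$, no SAGE cones, and no exponential change of variables. What you have written is instead a proof sketch of Theorem~\ref{thm:newPositiv} --- essentially the paper's own strategy for that result (polynomialize $\X$ and $G$, reduce to a finite constraint family, apply Dickinson--Povh, signomialize back as in Lemma~\ref{lem:dp2sage}) --- and your very first step \emph{invokes} Theorem~\ref{thm:complete:dpFinite} as a black box. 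Read against the target statement, you have assumed the conclusion as your principal tool.

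A proof of the actual statement must engage with the polynomial claim directly. By homogeneity one restricts to the compact intersection of the feasible cone with the standard simplex in $\R^{\cA}$, where positivity of $p$ yields a uniform positive lower bound; the inclusion of the constant polynomial $\vct{y}\mapsto 1$ in $Q$ is what permits a P\'{o}lya-type free term $\mu_1\cdot 1$ to absorb the part of $p$ not carried by the constraints, and the heart of the matter is a quantitative P\'{o}lya/compactness argument producing the exponent $r$. None of that machinery appears in your proposal. (Note the paper itself does not prove this theorem; it is imported from \cite{DP2014}. But whatever the intended proof, a reduction \emph{to} the signomial setting runs in exactly the wrong direction.)
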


For general choices of $(\cA, \X)$ we also require a reduction from a semi-infinite nonnegativity problem to a finite nonnegativity problem, as follows.

\begin{theorem}[\cite{DP2014}]\label{thm:dpReduction}  
    Consider a countable set $\{ p \} \cup Q$ of homogeneous polynomials on $\R^\cA$.
    If $p$ is positive on $\{ \vct{y} \in \R^{\cA}_+ \,:\, q(\vct{y}) \geq 0 \, \text{ for all } \, q \in Q\}\setminus\{\vct{0}\}$, then there exists a finite $Q' \subset Q$ for which $p$ is positive on $\{ \vct{y} \in \R^{\cA}_+ \,:\, q(\vct{y}) \geq 0 \, \forall \, q \in Q' \}\setminus\{\vct{0}\} $.
\end{theorem}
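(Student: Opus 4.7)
The plan is a one-step open-cover compactness argument on a compact slice of the positive orthant, made possible by the homogeneity of every polynomial involved.

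First, I would reduce to a compact set. Let $S \coloneqq \{\vct{y} \in \R^{\cA}_+ : \sum_{\evec \in \cA} y_\evec = 1\}$, the standard simplex in $\R^{\cA}$, which is compact. Every $\vct{y} \in \R^{\cA}_+ \setminus \{\vct{0}\}$ scales to a unique point of $S$ via $\vct{y} \mapsto \vct{y}/\sum_{\evec \in \cA} y_\evec$, and each homogeneous polynomial $r$ (whether $p$ or some $q \in Q$) satisfies $\sgn(r(\lambda \vct{y})) = \sgn(r(\vct{y}))$ for every $\lambda > 0$. Consequently, $p$ is positive on $\{\vct{y} \in \R^{\cA}_+ : q(\vct{y}) \geq 0 \text{ for all } q \in Q\} \setminus \{\vct{0}\}$ if and only if $p$ is positive on the compact set $K \coloneqq S \cap \bigcap_{q \in Q}\{q \geq 0\}$, and the analogous equivalence holds for any finite $Q' \subset Q$. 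It therefore suffices to prove the finite-reduction statement on $S$.

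Next, I would build an open cover of $S$. Continuity of $p$ and of each $q \in Q$ renders
\[
V \coloneqq \{\vct{y} \in S : p(\vct{y}) > 0\} \quad\text{and}\quad U_q \coloneqq \{\vct{y} \in S : q(\vct{y}) < 0\}, \; q \in Q,
\]
relatively open in $S$. The hypothesis asserts that these sets cover $S$: if $\vct{y} \in S$ has $p(\vct{y}) \leq 0$, then $\vct{y} \notin K$, so some $q \in Q$ violates $q(\vct{y}) \geq 0$ and thus $\vct{y} \in U_q$.

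Finally, I would extract a finite subcover. By compactness of $S$, there is a finite subcover, which (after adjoining $V$ if necessary) has the form $\{V\} \cup \{U_q : q \in Q'\}$ for some finite $Q' \subset Q$. Any $\vct{y} \in S$ with $q(\vct{y}) \geq 0$ for every $q \in Q'$ lies outside every $U_q$ with $q \in Q'$, and hence in $V$, i.e., $p(\vct{y}) > 0$. Rescaling recovers positivity of $p$ on $\{\vct{y} \in \R^{\cA}_+ : q(\vct{y}) \geq 0 \text{ for all } q \in Q'\} \setminus \{\vct{0}\}$. There is no real obstacle beyond the opening homogenization; the only subtlety is that $\sum_{\evec \in \cA} y_\evec > 0$ on $\R^{\cA}_+ \setminus \{\vct{0}\}$ so normalization loses no rays, and countability of $Q$ is not even essential, since compactness delivers a finite subcover for any index set.
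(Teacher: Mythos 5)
Your argument is correct: homogeneity reduces everything to the compact simplex $S$, the sets $\{p>0\}$ and $\{q<0\}$ form a relatively open cover of $S$ by the hypothesis, and a finite subcover yields $Q'$ (with the degenerate cases---empty feasible set, subcover omitting $V$---handled by your remarks). The paper itself gives no proof of this statement, importing it as a black box from Dickinson--Povh \cite{DP2014}; your compactness-on-the-simplex argument is the standard one underlying that cited result, so nothing further is needed.
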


Next, given a polynomial $p$ on $\R^{\cA}$, we have the \textit{\struc{\sigation{}}} $\vct{x} \mapsto p(\Exp\cA\vct{x})$.
\Sigation{} transparently preserves important algebraic properties.
For example, if we signomialize a polynomial that has nonnegative coefficients in the monomial basis, then we obtain a posynomial.
In addition, if $g$ is the \sigation{} of a polynomial $p$ and $p$ is a \polation{} of some signomial $f$, then $g = f$.
The following lemma roughly shows how these concepts help map Dickinson-Povh certificates to conditional SAGE certificates.

\begin{lemma}\label{lem:dp2sage}
    Let $Q = Q_1 \cup Q_2$ be a finite set of polynomials on $\R^{\cA}$ where the \sigation{} of each $q \in Q_2$ is $\X$-AGE and suppose $p$ is a \polation{} of $f$.
    If 
        $\textstyle{(\sum_{\evec \in \cA} y_{\evec})}^r p(\vct{y}) = \sum_{q \in Q} \mu_q(\vct{y}) q(\vct{y})$
    for polynomials $\mu_q$ with nonnegative coefficients and a natural number $r$,
    then there exists an $\X$-SAGE function $\lambda_f \in \R[\cA]$ for which
        $\textstyle{(\sum_{\evec\in\cA}\e^{\evec})^r}f = \lambda_f + \sum_{q\in Q_1} \lambda_q g_q$,
    where $\lambda_q$ is the \sigation{} of $\mu_q$ and $g_q$ is the \sigation{} of $q$.
\end{lemma}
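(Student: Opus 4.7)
\textbf{Proof proposal for Lemma \ref{lem:dp2sage}.} The plan is to apply \sigation{} to both sides of the given polynomial identity and verify that \sigation{} behaves well with respect to the algebraic and nonnegativity structures at play. First, observe that \sigation{} is a ring homomorphism from $\R[\vct{y}]$ (polynomials on $\R^{\cA}$) to $\R[\cA]$: it is literally the substitution $y_{\evec} \mapsto \e^{\evec}$, since for any monomial $\prod_{\evec} y_{\evec}^{w_{\evec}}$ we have $\prod_{\evec}\exp(w_{\evec}\langle\evec,\vct{x}\rangle) = \exp\langle\sum_{\evec} w_{\evec}\evec, \vct{x}\rangle \in \R[\cA]$. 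In particular, the \sigation{} of $\sum_{\evec\in\cA} y_{\evec}$ is $\sum_{\evec\in\cA}\e^{\evec}$, and by hypothesis the \sigation{} of $p$ is $f$.

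Applying \sigation{} to the identity $(\sum_{\evec} y_{\evec})^r p(\vct{y}) = \sum_{q\in Q}\mu_q(\vct{y})q(\vct{y})$ therefore produces
\begin{equation*}
    \textstyle\left(\sum_{\evec\in\cA}\e^{\evec}\right)^r f \;=\; \sum_{q\in Q_1}\lambda_q g_q \;+\; \sum_{q\in Q_2}\lambda_q g_q,
\end{equation*}
where $\lambda_q$ and $g_q$ are the \sigation{}s of $\mu_q$ and $q$ respectively. Since each $\mu_q$ has nonnegative coefficients, each $\lambda_q$ is a posynomial; and since $\mu_q, q \in \R[\vct{y}]$, their \sigation{}s lie in $\R[\cA]$. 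Define $\lambda_f = \sum_{q\in Q_2}\lambda_q g_q$; it remains to show that $\lambda_f$ is $\X$-SAGE.

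For this, the key step is to argue that a posynomial times an $\X$-AGE signomial is $\X$-SAGE. Expand each $\lambda_q = \sum_i a_{q,i}\e^{\vct{b}_{q,i}}$ with $a_{q,i} \geq 0$. Multiplying $g_q$ (which has at most one negative coefficient and is $\X$-nonnegative) by the strictly positive function $a_{q,i}\e^{\vct{b}_{q,i}}$ rescales and translates the support of $g_q$, producing a signomial with at most one negative coefficient that remains $\X$-nonnegative, i.e., an $\X$-AGE function. Summing these $\X$-AGE summands over $i$ and $q \in Q_2$ exhibits $\lambda_f$ as $\X$-SAGE, completing the argument.

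The only genuinely substantive step is the posynomial-times-$\X$-AGE closure, which is immediate once one recalls that scaling a signomial by a positive monomial $\e^{\vct{b}}$ is merely a translation of its support and preserves both its sign pattern and its values up to a positive multiplicative factor. The rest of the proof is essentially bookkeeping about the functoriality of \sigation{}; no obstacle of substance is expected.
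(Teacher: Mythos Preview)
Your proposal is correct and follows essentially the same route as the paper's proof: apply \sigation{} (as a ring homomorphism) to the polynomial identity, define $\lambda_f = \sum_{q\in Q_2}\lambda_q g_q$, and use the fact that a posynomial times an $\X$-AGE function is $\X$-SAGE. The paper's version is slightly terser, simply asserting the posynomial-times-$\X$-AGE closure rather than expanding it monomial-by-monomial as you do.
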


The work in our proof of Theorem \ref{thm:newPositiv} is to derive polynomial data from signomial data so that the hypotheses of Lemma \ref{lem:dp2sage} are satisfied.
Much of this work is accomplished in our next lemma.

\begin{lemma}\label{lem:polynomialize_X}
    There exists a countable set of homogeneous polynomials $Q(\X)$ on $\R^\cA$ satisfying the following properties:
    \begin{itemize}
        \item[(i)] each $q \in Q(\X)$ has at most two terms,
        \item[(ii)]  $\exp\cA\X = \{\vct{y} \in \R^{\cA} \,:\, q(\vct{y}) \geq 0 \, \, \text{ for all } \,\,q \in Q(\X),~ y_{\vct{0}} = 1\}$,
        \item[(iii)] if $\vct{y}$ is a nonzero vector where $q(\vct{y}) \geq 0$ for all $q \in Q(\X)$, then $\vct{y} > \vct{0}$.
    \end{itemize}
\end{lemma}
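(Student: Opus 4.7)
The strategy is to build $Q(\X)$ explicitly from two ingredients: the one-term polynomials $y_\evec$ (to enforce nonnegativity of coordinates) and two-term homogeneous binomials that encode the support-function description of $\X$. Since $\vct{y}=\exp(\cA\vct{x})$ with $\vct{x}\in\X$ requires $\langle\cA^\top\vct{u},\vct{x}\rangle\leq\sigma_{\X}(\cA^\top\vct{u})$ for every $\vct{u}\in\R^\cA$, exponentiating yields $\vct{y}^{\vct{u}^+}\leq e^{\sigma_{\X}(\cA^\top\vct{u})}\vct{y}^{\vct{u}^-}$, where $\vct{u}=\vct{u}^+-\vct{u}^-$ is the decomposition into positive and negative parts. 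Restricting to $\vct{u}\in\Z^\cA$ with $\langle\vct{1},\vct{u}\rangle=0$ forces $|\vct{u}^+|=|\vct{u}^-|$, making the resulting binomial homogeneous of degree $|\vct{u}^+|$. I would therefore set
\[
    Q(\X)\ =\ \{\, y_\evec : \evec\in\cA\,\}\ \cup\ \{\, e^{\sigma_{\X}(\cA^\top\vct{u})}\vct{y}^{\vct{u}^-} - \vct{y}^{\vct{u}^+} : \vct{u}\in\Z^\cA,\ \langle\vct{1},\vct{u}\rangle=0\,\}.
\]
This set is countable and consists of homogeneous polynomials with at most two terms, which establishes (i). The forward direction of (ii) is a direct substitution: for $\vct{y}=\exp(\cA\vct{x})$ with $\vct{x}\in\X$, the binomial's sign is governed by $\sigma_{\X}(\cA^\top\vct{u})-\langle\cA^\top\vct{u},\vct{x}\rangle\geq 0$.

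For the reverse direction of (ii), suppose $\vct{y}$ satisfies every $q(\vct{y})\geq 0$ with $y_{\vct{0}}=1$. I would proceed in four steps. (a) \emph{Strict positivity}: the binomial for $\vct{u}=\vct{\delta}_{\vct{0}}-\vct{\delta}_\evec$ reads $e^{\sigma_{\X}(-\evec)}y_\evec\geq y_{\vct{0}}=1$, forcing $y_\evec>0$. (b) \emph{Linearization}: taking logarithms converts each binomial constraint into $\langle\vct{u},\log\vct{y}\rangle\leq\sigma_{\X}(\cA^\top\vct{u})$ for $\vct{u}\in\Z^\cA$ with $\langle\vct{1},\vct{u}\rangle=0$. (c) \emph{Extension to $\R^\cA$}: rescaling by positive integers and using positive homogeneity of $\sigma_{\X}$ yields the inequality for all $\vct{u}\in\Q^\cA$ with $\langle\vct{1},\vct{u}\rangle=0$; continuity of $\sigma_{\X}\circ\cA^\top$ (from compactness of $\X$) then extends it to all real $\vct{u}$ with $\langle\vct{1},\vct{u}\rangle=0$. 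Using $\log y_{\vct{0}}=0$ and $\cA^\top\vct{\delta}_{\vct{0}}=\vct{0}$, the decomposition $\vct{u}=(\vct{u}-\langle\vct{1},\vct{u}\rangle\vct{\delta}_{\vct{0}})+\langle\vct{1},\vct{u}\rangle\vct{\delta}_{\vct{0}}$ drops the zero-sum restriction, giving $\langle\vct{u},\log\vct{y}\rangle\leq\sigma_{\X}(\cA^\top\vct{u})$ for every $\vct{u}\in\R^\cA$. (d) \emph{Recovery of $\vct{x}\in\X$}: specializing to $\pm\vct{u}\in\ker\cA^\top$ places $\log\vct{y}$ in $(\ker\cA^\top)^\perp=\operatorname{Im}\cA$, so $\log\vct{y}=\cA\vct{x}_0$ for some $\vct{x}_0\in\R^n$; substituting back, $\langle\cA^\top\vct{u},\vct{x}_0\rangle\leq\sigma_{\X}(\cA^\top\vct{u})$ for all $\vct{u}$, and a bipolar-theorem argument restricted to the subspace $\operatorname{span}\cA$ locates $\vct{x}_0$ in $\X+\ker\cA$, whose $\X$-component provides the required preimage.

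Property (iii) follows with a preliminary rescaling. If $\vct{y}\neq\vct{0}$ satisfies every $q\geq 0$, one first shows $y_{\vct{0}}>0$: the binomial for $\vct{u}=\vct{\delta}_\evec-\vct{\delta}_{\vct{0}}$ gives $e^{\sigma_{\X}(\evec)}y_{\vct{0}}\geq y_\evec$, so $y_{\vct{0}}=0$ would force every $y_\evec=0$, contradicting $\vct{y}\neq\vct{0}$. Homogeneity of each $q\in Q(\X)$ then lets us rescale $\vct{y}\mapsto\vct{y}/y_{\vct{0}}$ and apply (ii), which returns $\vct{y}>\vct{0}$. The main obstacle I anticipate is the rational-to-real extension of step (c) together with the bipolar step (d) when $\operatorname{span}\cA\subsetneq\R^n$: the former hinges on continuity of $\sigma_{\X}\circ\cA^\top$ (and hence on compactness of $\X$), while the latter requires observing that $\vct{y}=\exp(\cA\vct{x})$ depends on $\vct{x}$ only modulo $\ker\cA$, so a suitable representative of $\vct{x}_0+\ker\cA$ inside $\X$ always exists.
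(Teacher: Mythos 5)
Your construction is correct and is essentially the paper's: the same one-term coordinate polynomials $y_{\evec}$ plus homogeneous binomials encoding integer-normal supporting halfspaces of $\cA\X$ (your zero-sum restriction on $\vct{u}\in\Z^{\cA}$ plays the role of the paper's homogenization by powers of $y_{\vct{0}}$, which is equivalent since $\vct{0}\in\cA$), and the same use of the $\vct{u}=\pm(\vct{\delta}_{\evec}-\vct{\delta}_{\vct{0}})$ binomials to get property (iii). The only difference is that you prove inline — via log-linearization, rational density, continuity of $\sigma_{\X}\circ\cA^{\intercal}$, and the bipolar argument modulo $\ker\cA$ — the fact that integral-normal halfspaces suffice to cut out the compact convex set $\cA\X$, which the paper dispatches by citing an external result; your version of that step is sound.
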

As a consequence of conditions (i) and (ii) in the lemma, the \sigation{} of any $q \in Q(\X)$ has at most two terms and is $\X$-nonnegative.

\begin{proof}[Proof of Theorem \ref{thm:newPositiv}]
    Fix $f > 0$ on $\set{K} := \{ \vct{x} \in \X\,:\, g(\vct{x}) \geq 0 \, \forall \, g \in G \}$.
    Let $p$ be a \polation{} of $f$, $Q(G)$ be a set of \polation{}s of $G$ (one polynomial for each signomial in $G$), and $Q(\X)$ be as in Lemma \ref{lem:polynomialize_X}.
    Define the region $\set{K}_p = \exp\cA\set{K}$ within $\R^{\cA}$ and the set of polynomials $Q = Q(\X) \cup Q(G)$.
    
    We begin by noting how $\set{K}_p = \left(\exp\cA\X\right) \cap \{ \exp\cA\vct{x} \,:\, \vct{x}\in \R^n,~ g(\vct{x}) \geq 0\,\forall\,g \in G\}$.
    Next, we apply Lemma \ref{lem:polynomialize_X} and we use the fact that $g(\vct{x}) = q(\exp\cA\vct{x})$ when $q$ is a \polation{} of $g$.
    This allows us to write $\set{K}_p$ purely in terms of homogeneous polynomials: $\set{K}_p = \{ \vct{y} \,:\, y_{\vct{0}} = 1,~ q(\vct{y}) \geq 0 \, \forall \, q \in Q \}$.
    From here we drop the constraint $y_{\vct{0}} = 1$ to obtain $\set{T} = \{ \vct{y}\,:\, q(\vct{y}) \geq 0 \, \forall \, q \in Q \}$.
    Apply the third property of $Q(\X)$ from Lemma~\ref{lem:polynomialize_X} to see that $\set{T}\setminus\{\vct{0}\}$ is contained within $\R^{\cA}_{++}$.
    
    Let $d = \deg_{\cA}(f)$ and consider an arbitrary vector $\vct{y} \in \set{T}\setminus\{\vct{0}\}$.
    Since $p$ is homogeneous, we have $p(\vct{y}) = y_{\vct{0}}^d p(\vct{y}/y_{\vct{0}})$.
    Similarly, because all polynomials defining $\set{T}$ are homogeneous, we have that
    $\tilde{\vct{y}} := \vct{y} / y_{\vct{0}}$ is in $\set{K}_p$.
    By the definition of $\set{K}_p$ we know that every vector $\tilde{\vct{y}}\in \set{K}_p$ can be represented as $\tilde{\vct{y}} = \Exp\cA\vct{x}$ for suitable $\vct{x} \in \set{K}$.
    Since $p$ is a \polation{} of $f$, we find $p(\vct{y}) = y_{\vct{0}}^d f(\vct{x})$, which is positive by assumption on $f,\set{K}$.
    We therefore have that $p$ is positive on $\set{T}\setminus\{\vct{0}\}$.
    
    By Theorem \ref{thm:dpReduction} there exists a $Q' \subset Q$ that is finite and where $p$ is positive on $\set{T}' \setminus \{\vct{0}\}$ for $\set{T}' := \{ \vct{y} \,:\, q(\vct{y}) \geq 0 \, \, \forall \, \, q \in Q' \}$.
    We are free to assume $Q' = Q(\X)' \cup Q(G)$ where $Q(\X)' \subset Q(\X)$ includes the constant polynomial $\vct{y} \mapsto 1$.
    By Theorem \ref{thm:complete:dpFinite}, there exists an $r \in \N$ and homogeneous polynomials $\{h_q\}_{q\in Q'}$ on $\R^{\cA}$ with nonnegative coefficients where
    \[
        (\textstyle{\sum_{\evec \in \cA} y_{\evec})^r} p(\vct{y}) = \sum_{q \in Q'} h_q(\vct{y})q(\vct{y}).
    \]
    From property (i) of $Q(\X)$ we know that each constraint polynomial $q \in Q(\X)'$ has at most two terms.
    In addition, property (ii) of $Q(\X)$ tells us that the \sigation{} of any $q \in Q(\X)'$ is $\X$-nonnegative.
    It is easily verified that all $\X$-nonnegative signomials with at most two terms are $\X$-AGE.
    We may therefore apply Lemma \ref{lem:dp2sage} to obtain
    \begin{equation*}
        \textstyle{(\sum_{\evec\in\cA}\e^{\evec})^r}f = \lambda_f + \sum_{q\in Q(G)} \lambda_q g_q
    \end{equation*}
    for signomials $g_q(\vct{x}) = q(\Exp \cA \vct{x})$, posynomials $\lambda_q(\vct{x}) = \mu_q(\Exp \cA \vct{x})$, and an $\X$-SAGE $\lambda_f \in \R[\cA]$.
    We complete the proof by noting that $\{ g_q \}_{q\in Q(G)} = G$.
\end{proof}

We emphasize that the decomposition promised in Theorem \ref{thm:newPositiv} makes no reference to the set $Q(\X)$ used in our proof of the theorem.
This reflects how such a decomposition exists for given $r \in \N$ if (but not only if) there are \textit{any} polynomials $Q(\X)$ satisfying Lemma \ref{lem:dp2sage} where the \polation{} of $f$ admits a Dickinson-Povh certificate over $\{ \vct{y} \,:\, q(\vct{y}) \geq 0 \text{ for all } q \in Q(G) \cup Q(\X) \}$ with exponent $r$.
So by virtue of using SAGE certificates we do not need to construct $Q(\X)$ explicitly, and in fact we automatically do at least as well as choosing the \textit{best possible} $Q(\X)$ consistent with Lemmas \ref{lem:polynomialize_X} and Theorems \ref{thm:complete:dpFinite} and \ref{thm:dpReduction}.

\subsection{Proof of Lemma \ref{lem:dp2sage}}\label{subsec:proof_lem_dp2sage}

Let $r \in \N$ be such that the stated polynomials $\mu_q$ exist, and let $\lambda_q, g_q$ be the \sigation{}s given in the lemma statement.
Since $\mu_q$ are polynomials with nonnegative coefficients, the \sigation{}s $\lambda_q$ are posynomials.
Set $\lambda_f = \sum_{q \in Q_2} \lambda_q g_q$.
We are given that the \sigation{} $g_q$ of any $q \in Q_2$ is $\X$-AGE.
Since the product of an $\X$-AGE function with a posynomial is $\X$-SAGE, and sums of such products are likewise $\X$-SAGE, we find that the stated $\lambda_f$ is $\X$-SAGE.
Completing the proof is a matter of purely algebraic identifications.
Namely,
\begin{align}
    \textstyle{(\sum_{\evec\in\cA} \e^{\evec}(\vct{x}))^r}f(\vct{x}) 
        &= \textstyle{(\sum_{\evec\in\cA} \e^{\evec}(\vct{x}))^r} p(\Exp \cA \vct{x})  \nonumber \\
        &= \sum_{q\in Q} \mu_q(\Exp \cA \vct{x}) q(\Exp \cA \vct{x}) = \lambda_f(\vct{x}) + \sum_{q\in Q_1} \lambda_g(\vct{x}) g_q(\vct{x}), \nonumber
\end{align}
where the last equality decomposed the sum over $Q = Q_1 \cup Q_2$ and applied the definitions of $\lambda_f$, $\lambda_q$, and $g_q$.

\subsection{Proof of Lemma \ref{lem:polynomialize_X}}\label{subsec:proof_lem_polynomialize_X}

We build up this set of polynomials incrementally.
To avoid clutter we use the symbol $Q$ rather than $Q(\X)$ for the proof, and we set $\set{T} := \{ \vct{y} \,:\, q(\vct{y}) \geq 0 \, \forall \, q\in Q\}$ for the current value of $Q$.
Begin by initializing $Q = \{ \vct{y} \mapsto y_{\evec} \,:\, \evec \in \cA \}$, so that $\set{T} = \R^{\cA}_+$.
Then update $Q$ to include the $2(|\cA|-1)$ linear functions
\[
q_{\evec}^{+}(\vct{y}) = y_{\vct{0}}\left(\max_{\vct{x}\in \X}\e^{\evec}(\vct{x})\right) - y_{\evec} \quad \text{and } \quad  q_{\evec}^{-}(\vct{y}) = y_{\evec} - y_{\vct{0}}\left(\min_{\vct{x}\in \X} \e^{\evec}(\vct{x})\right) 
\]
for $\evec \in \cA \setminus \{\vct{0}\}$.
The functions $q_{\evec}^{+}$ ensure that any vector $\vct{y} \in \set{T}$ with $y_{\vct{0}} = 0$ necessarily satisfies $\vct{y} = \vct{0}$.
Conversely, the functions $q_{\evec}^{-}$ ensure that when $y_{\vct{0}} > 0$ we have $\vct{y} > \vct{0}$.
When considered together, we have that if $\vct{y}$ is a nonzero vector in $\set{T}$, then $\vct{y} > \vct{0}$, and so the set of polynomials $Q$ already satisfies property (iii) in the lemma statement.

We turn to property (ii).
Recall the notation where $\vct{\delta}_{\evec}$ is the standard basis vector in $\R^{\cA}$ corresponding to $\evec \in \cA$.
Then $\vct{y}$ belongs to $\Exp\cA\X$ if an only if there exists a $\vct{z} \in \cA\X$ where $y_{\evec} = \e^{\vct{\delta}_{\evec}}(\vct{z})$.
The set $\cA\X$ is compact and convex, therefore by a continuity argument (or a direct application of \cite[Theorem 3.1]{dcST2020}) there exists a set $S \subset \Z^{\cA} \times \R$ where $\vct{z} \in \cA\X$ holds if and only if $\langle\vct{a},\vct{z}\rangle \leq \log b$ for all $(\vct{a}, \log b) \in S$.
We can take $S$ to be countable by always choosing $\log b = \max\{ \langle \vct{a}, \cA\vct{x}\rangle \,:\, \vct{x}\in \X\}$.
For given $(\vct{a},\log b) \in S$, take componentwise maximums $\vct{\gamma} = \vct{0} \wedge \vct{a}$ and $\vct{\kappa} = \vct{0} \wedge (-\vct{a})$, so the inequality $\langle\vct{a},\vct{z}\rangle \leq \log b$ is equivalent to $\e^{\vct{\gamma}}(\vct{z}) \leq b \e^{\vct{\kappa}}(\vct{z})$.
For each such signomial inequality there is a polynomial inequality $\vct{y}^{\vct{\gamma}} \leq b \vct{y}^{\vct{\kappa}}$ that is equivalent in the relevant regime $\vct{y} \in \set{T}, y_{\vct{0}} = 1$.
Setting  $v = \|\vct{a}\|_1 - \|\vct{\kappa}\|_1$ and $u = \|\vct{a}\|_{1} - \|\vct{\gamma}\|_1$, we homogenize the polynomial inequality defined above to an equivalent form $q(\vct{y}) =  b y_{\vct{0}}^{v} \vct{y}^{\vct{\kappa}} -  y_{\vct{0}}^{u} \vct{y}^{\vct{\gamma}} \geq 0$.
We finalize $Q$ by updating it to contain all homogeneous polynomials obtained in this way.
Since all reformulations employed here were reversible over $\R^{\cA}_{++}$, we have (ii): $\Exp \cA\X = \{ \vct{y} : q(\vct{y}) \geq 0 \, \forall \, q \in Q, y_{\vct{0}} = 1\}$.

As property (i) holds by construction, the proof is complete.

\begin{remark}\label{rem:discuss_QX_ideal}
    The exponents of the polynomials in $Q$ were derived from halfspaces that contain the compact convex set $\cA\X$.
    Since $\cA\X$ is low-dimensional in general, some of these halfspaces can come together to form \textit{hyperplanes} containing $\cA\X$.
    Let $H \subset \N^{\cA}$ denote the set of all integral normal vectors of hyperplanes that contain $\cA\X$.
    For $\vct{a} \in H$ we can use the construction described above to obtain a polynomial $\vct{y} \mapsto q(\vct{y}) = b y_{\vct{0}}^{v}\vct{y}^{\vct{\kappa}} - y_{\vct{0}}^{u}\vct{y}^{\vct{\gamma}}$ where $q(\exp \cA \vct{x}) = 0$ for all $\vct{x}$ in $\X$.
    If $\X$ is full-dimensional then the real locus of these polynomials is more or less the smallest toric variety that contains $\set{Y} := \exp\cA \R^n$.
    It is possible that $\set{Y}$ is poorly approximated by a variety; in this case we are leveraging compactness to provide a \textit{local} description for $\exp\cA\X$ in terms of infinitely many polynomial inequalities.
    Prior assumptions from \cite{CS2016,WJYP2020} that $\cA \subset \Q^n$ were used to construct polynomial equations for describing $\exp\cA\R^n$ as the intersection of a variety with the positive orthant.
\end{remark}

\section{A complete hierarchy of lower bounds}\label{sec:lower_bounds}

This section demonstrates how the concept of signomial rings leads to improved methods for lower-bounding and solving nonconvex optimization problems.
Formally, given a finite set of signomials $\{f \} \cup G$ and a closed convex set $\X$, we would like to solve
\begin{equation}\label{prob:sig_optim_prob}
    f_{\set{K}}^\star = \inf_{\vct{x} \in \set{K}} f(\vct{x}) \quad \text{where} \quad \set{K} = \{ \vct{x}\in \X \,:\, g(\vct{x}) \geq 0 \text{ for all } g \text{ in } G \}.
\end{equation}
Our high-level approach here is quite standard.
We want certificates that shifted signomials $f - \gamma$ are nonnegative on $\set{K}$, and such certificates are available to us through Theorem \ref{thm:newPositiv}.
In order to implement this idea we just need to grade the certificates according to largest $\cA$-degree of the constituent signomials.

We recall two essential definitions from Section \ref{sec:sig_rings}.
First, the $\cA$-degree of a signomial $h$ is the smallest integer $d$ for which $\supp(h) \subset \cA_{d}$.
Second, for a given signomial $h$ and positive integer $d$, the set $A := \invsupp_{d}(h)$ is the largest $A\subset\cA_d$ for which $\deg_{\cA}(h \e^{\bevec} ) \leq d$ for every $\bevec \in A$.

\begin{definition}\label{def:lowerHierarchy}
    Given an integer $d$ where $r := d - \deg_{\cA}(f) \geq 0$, the $\cA$-degree $d$ SAGE bound for Problem~\eqref{prob:sig_optim_prob} is
    \begin{align}
        \struc{f_{\set{K}}^{(d)}} := \sup ~\gamma~\text{s.t.}
            ~&~ ({\textstyle\sum_{\evec\in\cA} \e^{\evec}})^{r}(f - \gamma) - {\textstyle\sum_{g \in G} \lambda_g g} \text{ is } \X\text{-SAGE} \label{eq:hierarchy},\\
            ~&~ \gamma \in \R, \text{ and } \lambda_g \in {\csage_{\X}}\left(\invsupp_{d}(g)\right) \text{ for each } g \in G.  \nonumber
    \end{align}
    When $d < \deg_{\cA}(f)$, we set $f_{\set{K}}^{(d)} = -\infty$.
\end{definition}

When the hierarchy is applied to problems with an equality constraint $g(\vct{x}) = 0$, one simply uses an unconstrained multiplier $\lambda_g \in \spann\{\e^{\evec}\,:\, \evec \in \invsupp_{d}(g)\}$.
The canonical ``lowest level'' of the hierarchy is to take $d = \deg_{\cA}(f)$.
Note that we make no assumptions about the $\cA$-degree of constraint signomials, because it is possible that $\deg_{\cA}(g) > d$ and yet $\invsupp_{d}(g)$ is nonempty (see Example \ref{ex:multiplier_subspace}).
Here is an important point we reference later in this section.

\begin{remark}\label{rem:lowest_level}
    When $G = \emptyset$, the bound provided by the lowest level of the hierarchy \eqref{eq:hierarchy} is independent of the underlying signomial ring.
    That is, if we set $d = \deg_{\cA}(f)$ and compute $f_{\set{K}}^{(d)}$ when working in $\R[\cA]$, then the numeric value of $f_{\set{K}}^{(d)}$ remains unchanged if we replace $\cA \leftarrow \cA'$ (provided $f$ belongs to both $\R[\cA]$ and $\R[\cA']$).
\end{remark}

And here is our main theoretical result for the hierarchy.

\begin{corollary}\label{cor:lowerComplete}
    The sequence $f_{\set{K}}^{(1)},f_{\set{K}}^{(2)},\ldots $ is nondecreasing and bounded above by $f_{\set{K}}^\star$.
    If the signomials $\{f\} \cup G$ belong to $\R[\cA]$ and $\X$ is compact, then
    \[
        \lim_{d \to\infty } f_{\set{K}}^{(d)} = f_{\set{K}}^\star.
    \]
\end{corollary}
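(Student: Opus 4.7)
First, the monotonicity $f_{\set{K}}^{(d)}\le f_{\set{K}}^{(d')}$ for $d\le d'$ I would handle by lifting a feasible triple $(\gamma,\{\lambda_g\})$ from level $d$ to level $d'$ via $\tilde{\lambda}_g := (\sum_{\evec\in\cA}\e^{\evec})^{d'-d}\lambda_g$. These lifted multipliers remain $\X$-SAGE (the product of a posynomial with an $\X$-SAGE signomial is $\X$-SAGE), and their supports lie in $\invsupp_{d}(g)+\cA_{d'-d}\subset\invsupp_{d'}(g)$, where the inclusion follows from the elementary fact $\cA_k+\cA_\ell\subset\cA_{k+\ell}$ that drops out of the definition of $\cA_d$. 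Multiplying the level-$d$ certificate by $(\sum_{\evec\in\cA}\e^{\evec})^{d'-d}$ then yields a level-$d'$ certificate for the same $\gamma$. The upper bound $f_{\set{K}}^{(d)}\le f_{\set{K}}^\star$ I would obtain by direct evaluation: for any $\vct{x}\in\set{K}$, the certificate identity $(\sum_{\evec\in\cA}\e^{\evec})^{d-\deg_{\cA}(f)}(f-\gamma) = \lambda_f + \sum_{g\in G}\lambda_g g$ has nonnegative right-hand side (each $\X$-SAGE multiplier is nonnegative on $\X\supset\set{K}$, and each $g(\vct{x})\ge 0$), while the posynomial $\sum_{\evec\in\cA}\e^{\evec}$ is strictly positive, so $f(\vct{x})\ge\gamma$.

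For the completeness assertion $\lim_d f_{\set{K}}^{(d)}=f_{\set{K}}^\star$, fix $\epsilon>0$. Under compactness of $\X$ the set $\set{K}$ is compact too (signomials are continuous), so $f-(f_{\set{K}}^\star-\epsilon)$ is strictly positive on $\set{K}$. My plan is to apply Theorem~\ref{thm:newPositiv} to obtain $r\in\N$, an $\X$-SAGE $\lambda_f\in\R[\cA]$, and posynomials $\lambda_g\in\R[\cA]$ with
\[
    \textstyle(\sum_{\evec\in\cA}\e^{\evec})^r\bigl(f-(f_{\set{K}}^\star-\epsilon)\bigr) = \lambda_f+\sum_{g\in G}\lambda_g\, g,
\]
then set $d^\star := r+\deg_{\cA}(f)$ and observe that this identity is exactly the form demanded by the definition of $f_{\set{K}}^{(d^\star)}$. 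Provided the support condition $\lambda_g\in\csage_{\X}(\invsupp_{d^\star}(g))$ is verified, $(\gamma,\{\lambda_g\})$ with $\gamma=f_{\set{K}}^\star-\epsilon$ is feasible at level $d^\star$, yielding $f_{\set{K}}^{(d^\star)}\ge f_{\set{K}}^\star-\epsilon$. Combined with the monotonicity and upper bound already established, sending $\epsilon\to 0^+$ closes the argument.

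The main obstacle in this plan is precisely that support condition on $\lambda_g$: Theorem~\ref{thm:newPositiv} only asserts $\lambda_g\in\R[\cA]$ with no explicit degree bound, and one cannot salvage an overlarge multiplier by passing to a higher level of the hierarchy, since the extra factors of $(\sum_{\evec\in\cA}\e^{\evec})$ needed for the lift inflate both sides of the target degree inequality in lockstep. To close the gap I would open the proof of Theorem~\ref{thm:newPositiv}: the Dickinson-Povh certificate in Theorem~\ref{thm:complete:dpFinite} produces homogeneous multipliers $\mu_q$, which (by projecting onto the homogeneous component matching the left-hand side) may be taken of degree exactly $r+\deg(p)-\deg(q)$. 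Since the sigation of a homogeneous polynomial of degree $m$ lies in $\spann\{\e^{\evec}:\evec\in\cA_m\}$, and since one may take $p,q$ as homogeneous polynomializations of $f,g$ of degrees $\deg_{\cA}(f),\deg_{\cA}(g)$ respectively, the multiplier $\lambda_g$ inherits $\deg_{\cA}(\lambda_g)\le r+\deg_{\cA}(f)-\deg_{\cA}(g) = d^\star-\deg_{\cA}(g)$. Therefore $\supp(\lambda_g)\subset\cA_{d^\star-\deg_{\cA}(g)}\subset\invsupp_{d^\star}(g)$, the last containment once again being $\cA_k+\cA_\ell\subset\cA_{k+\ell}$ applied to $\supp(g)\subset\cA_{\deg_{\cA}(g)}$, and the proof is complete.
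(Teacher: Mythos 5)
Your proof is correct and follows the same route as the paper: monotonicity by lifting level-$d$ certificates with extra factors of $\sum_{\evec\in\cA}\e^{\evec}$, the upper bound by direct evaluation on $\set{K}$, and convergence via Theorem \ref{thm:newPositiv} applied to $f-(f_{\set{K}}^\star-\epsilon)$. The one place you go beyond the paper's own one-line convergence argument is the support condition $\lambda_g\in\csage_{\X}(\invsupp_{d^\star}(g))$: you are right that Theorem \ref{thm:newPositiv} as stated gives no bound on $\deg_{\cA}(\lambda_g)$ and that the deficit cannot be repaired by moving to a higher level, since the required inequality $\deg_{\cA}(\lambda_g)+\deg_{\cA}(g)\le r+\deg_{\cA}(f)$ is unchanged after multiplying the identity through by further powers of $\sum_{\evec\in\cA}\e^{\evec}$. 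Your fix — extracting the bound $\deg_{\cA}(\lambda_g)\le r+\deg_{\cA}(f)-\deg_{\cA}(g)$ from the homogeneity of the Dickinson--Povh multipliers together with degree-$\deg_{\cA}(g)$ homogeneous polynomializations of the constraints, then using $\cA_k+\cA_\ell\subset\cA_{k+\ell}$ — is exactly what is needed; it is implicit in the paper's proof of the Positivstellensatz but elided in its proof of the corollary, so making it explicit is a genuine improvement rather than a detour.
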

\begin{proof}
    The sequence is nondecreasing because $\invsupp_{d}(g) \subset \invsupp_{d+1}(g)$ and $\Csage_{\X}(A) \subset \Csage_{\X}(A')$ whenever $A \subset A' \subset \R^n$.
     That is, the feasible sets grow with $d$.
     The sequence is bounded above by $f_{\set{K}}^\star$ because every feasible solution certifies $f(\vct{x}) \geq \gamma$ for all $\vct{x} \in \set{K}$.
     Under the assumptions on $\X$ and $\R[\cA]$, convergence to $f_{\set{K}}^\star$ follows from Theorem \ref{thm:newPositiv} and the fact that posynomials are trivially $\X$-SAGE.
\end{proof}

Corollary \ref{cor:lowerComplete} is the first completeness result for minimizing an arbitrary signomial subject to constraints given by a compact convex set and a conjunction of arbitrary (but finitely many) signomial inequalities.
It is also the first completeness result for a hierarchy that uses conditional SAGE in the presence of nonconvex constraints.
As a practical matter, we recall a question posed in \cite[\S 5.4]{MCW2019}: when designing SAGE-based hierarchies for signomial optimization, how should we decide the support of a generalized Lagrange multiplier $\lambda_g$ by considering properties of the constraint signomial $g$?
Our Corollary \ref{cor:lowerComplete} says this can be done in any way that respects the structure of a signomial ring.

The rest of this section explores the practicality of our hierarchy through three examples, primarily stated in terms of variables $\vct{t} = \exp \vct{x}$.
The first problem illustrates various considerations in choosing a signomial ring.
The second problem is adapted from stability analysis of a chemical reaction network described in \cite{Pantea2012} and \cite{MurrayPhD}; applying our methods to this problem result in more than a 500x speedup over the Lasserre hierarchy.
Our last problem is a benchmark signomial program which encodes the design of a chemical reactor \cite{BW1969-sig-ChemE}.
We use Section \ref{subsec:probsca} to prove an invariance property of our hierarchy before providing the reactor design problem in Section \ref{subsec:CSTR}.
Our methods outperform SCIP, BARON, and ANTIGONE on this problem. 

We implemented the hierarchy \eqref{eq:hierarchy} using the \texttt{sageopt} python package \cite{sageopt} and we used GloptiPoly3 to access the Lasserre hierarchy \cite{gloptipoly}.
MOSEK 9.2 was the underlying numerical solver for all such convex relaxations; see \cite{DA2021} for the mathematical statement of MOSEK's algorithm.
All computation was performed on a machine with a Core i7-1065G7 CPU (4 cores at 1.30GHz) and 16 GB DDR4 RAM (3733 MT/s).

\subsection{Low degree polynomial optimization}

If all signomials in \eqref{prob:sig_optim_prob} have integer exponents and finite lower bounds on the decision variable $\vct{x}$, then the problem can be written with polynomials in $\vct{t}$ by clearing denominators.
The following nonconvex quadratic program was obtained by applying this procedure to \cite[Problem 23]{RM1978}.

\begin{align}
    \min_{\vct{t} \in \R^5_{++}} 
                & ~~~ 5.3578\, t_3^2 + 0.8357\, t_1^{} t_5^{} + 37.2393 t_1^{} 
                \label{prob:ex1} \\
    \text{s.t }
                & \begin{rcases}
                    &  0.06663\, t_2^{} t_5^{} -0.02584\, t_3^{} t_5^{} + 0.0734\, t_1^{} t_4^{} + 1000 \geq 0 \\
                    &  0.33085\, t_3^{} t_5^{} - 0.853007\, t_2^{} t_5^{} - 0.09395\, t_1^{} t_4^{} + 1000 \geq 0 \\
                    &  0.4200\, t_1^{} t_2^{} + 0.30586\, t_3^2 + t_2^{} t_5^{} - 1330.3294 \geq 0 \\
                    &   0.2668\, t_1^{} t_3^{} + 0.40584\, t_3^{} t_4^{} + t_3^{} t_5^{} - 2275.1326 \geq 0
                \end{rcases} G_{\text{nonconvex}}
                \nonumber \\
                & ~~~\, 1000 - 0.24186\, t_2^{} t_5^{} - 0.10159\, t_1^{} t_2^{} - 0.07379\, t_3^2 \geq 0
                \nonumber \\
                & ~~~\, 1000 - 0.29955\, t_3^{} t_5^{} - 0.07992\, t_1^{} t_3^{} - 0.12157\, t_3^{} t_4^{}  \geq 0 
                \nonumber \\
                & \begin{rcases}
                    & (102, 45, 45, 45, 45) - \vct{t} \geq \vct{0}\\
                    & \vct{t} - (78, 33, 27, 27, 27) \geq \vct{0}
                \end{rcases} G_{\text{box}} \nonumber
\end{align}

We have labeled the set of inequality constraints that are nonconvex in $\vct{x}$ as $G_{\text{nonconvex}}$ and use $G_{\text{box}}$ for the signomials that imply box constraints on $\vct{x}$.
We use $G_{\text{all}}$ to refer to all constraints appearing in  \eqref{prob:ex1}.
Applying solution recovery to the SAGE relaxations discussed below shows that the optimal solution to this problem is
    $\vct{t}^\star \approx (78,~ 33,~ 29.99574,~ 45,~ 36.77533)$
with optimal objective $f_{\set{K}}^\star \approx 10122.4932$

Problem \eqref{prob:ex1} lets us illustrate the effect of considering different signomial rings and different sets of ``algebraic'' constraints $G$.
While exploring these effects, we fix
\[
    \X = \{\vct{x}\,:\, g(\vct{x}) \geq 0\, \text{ for all }\, g \in G_{\text{all}} \setminus G_{\text{nonconvex}} \}.
\]
We examine three cases where we set $G$ to $G_{\text{all}}$, to $G_{\text{all}} \setminus G_{\text{box}}$, and to $G_{\text{nonconvex}}$.
For each choice of $G$ we consider two types of signomial rings.
For the \textit{naive rings} we take $\cA$ as the smallest set so every signomial in $\{ f \} \cup G$ has $\cA$-degree one.
The naive rings have generating sets of size $19$, $15$, and $12$ (as $G$ gets smaller).
We also consider a \textit{natural ring} $\cA = \{\vct{0},\vct{\delta}_1,\ldots,\vct{\delta}_n\}$ that reflects how \eqref{prob:ex1} is polynomial in $\vct{t}$.

Performance data for the SAGE relaxations is given in Tables  \ref{tab:ex1:natural} and \ref{tab:ex1:naive}. 
We see finite convergence for the hierarchy in four out of the six choices of $(G,\cA)$.
The best bound at each hierarchy level used $G_{\text{all}}$.
This reflects a known phenomenon where incorporating a constraint in an explicit algebraic way can improve bounds even when the constraint is nominally accounted for in the set $\X$.
Another key point from the data is that the solver runtimes scale more gracefully when using the natural ring compared to the naive ring.
This is to be expected, since the natural ring is smaller than the naive rings.

\begin{table}[ht!]
    \caption{Natural-ring SAGE bounds and solver runtimes for problem \eqref{prob:ex1}.}
    \small\centering
    \begin{tabular}{c|ccc|ccc|} 
    & \multicolumn{3}{c}{$\cA$-degree $d$ SAGE bounds} & \multicolumn{3}{|c|}{solver runtimes (s)} \\
    $d$ & $G_{\text{all}}$ & $G_{\text{all}} \setminus G_{\text{box}}$ & $G_{\text{nonconvex}}$ & $G_{\text{all}}$ & $G_{\text{all}} \setminus G_{\text{box}}$ & $G_{\text{nonconvex}}$ \\ \hline
    2 & 10022.940 & \textcolor{white}{1}9322.848 & \textcolor{white}{1}9322.849 & 0.070 & 0.015 & 0.015 \\
    3 & {10122.493} & \textcolor{white}{1}9964.326 & \textcolor{white}{1}9954.832 & 0.588 & 0.184 & 0.132 \\
    4 & - & {10122.493} & 10074.250 & - & 1.314 & 1.368 \\ 
    \end{tabular}
    \label{tab:ex1:natural}
\end{table}

\begin{table}[ht!]
    \caption{Naive-ring SAGE bounds and solver runtimes for problem \eqref{prob:ex1}.}
    \small\centering
    \begin{tabular}{c|ccc|ccc|} 
    & \multicolumn{3}{c}{$\cA$-degree $d$ SAGE bounds} & \multicolumn{3}{|c|}{solver runtimes (s)} \\
    $d$ & $G_{\text{all}}$ & $G_{\text{all}}\setminus G_{\text{box}}$ & $G_{\text{nonconvex}}$ & $G_{\text{all}}$ & $G_{\text{all}}\setminus G_{\text{box}}$ & $G_{\text{nonconvex}}$ \\ \hline
    1 & 10022.929 &  \textcolor{white}{1}9322.848 &  \textcolor{white}{1}9322.849 & 0.045 & 0.015 & 0.015 \\
    2 & { 10122.493} & 10069.946 & 10059.838 & 1.600 & 0.289 & 0.338 \\
    3 & - & {10122.493} & 10112.300 & - & 7.939 & 7.918 \\ 
    \end{tabular}
    \label{tab:ex1:naive}
\end{table}

We consider one more SAGE relaxation before finishing this example.
Using $\X=\R^n$, $G = G_{\text{all}}$, the natural ring, and $d=3$, we solve \eqref{prob:ex1} exactly with a relative entropy program that is solved in only 0.27 seconds.
This shows how hierarchy \eqref{eq:hierarchy} has practical value even for noncompact sets $\X$.
Moreover, the 0.27 second solve time with this SAGE approach is comparable to the 0.23 second solve time required by using the first level of the Lasserre hierarchy (which also solves \eqref{prob:ex1} exactly).
The next section considers a high-degree polynomial optimization problem for which our methods are orders of magnitude more efficient than the Lasserre hierarchy.

\subsection{Minimizing a polynomial from chemical reaction network theory}

A \textit{chemical reaction network} (CRN) is a continuous-time dynamical system where the state variables represent the concentrations of various \textit{chemical species} in a shared environment \cite{Yu2018}.
CRNs are typically defined by assuming the law of mass-action kinetics.
Here, the rate at which a reaction occurs is proportional to the concentrations of the reactants.
Under mass-action kinetics, a CRN is specified by a polynomial map $p : \R^n_{++} \times \R^{m}_{++} \to \R^n$
\begin{equation}\label{eq:def_crn}
    \frac{\diff}{\diff t}\vct{s}(t) = p(\vct{s}(t);\vct{r})
\end{equation}
-- where $\vct{r}$ specifies the proportionality constants for the rate of each reaction.

Significant effort has been devoted to understanding when a given CRN exhibits \textit{multistationarity} (i.e., has multiple fixed points) over a set $\set{S} \subset \R^n_{++}$.
We refer the reader to \cite[\S 1]{Conradi2019} for references on this line of work.
Variations of this problem ask if a system as \textit{capacity for multistationarity}: can we add a constant vector-valued offset to $p$ so \eqref{eq:def_crn} has multiple fixed points in $\set{S}$?
Important work by Pantea, Koeppl, and Craciun \cite{Pantea2012} shows that for most systems encountered in applications, one can decide if a system has capacity for multistationarity for some $\vct{r} \in \set{R}$ by checking if $q(\vct{r},\vct{s}) \coloneqq \det\operatorname{Jac} p(\vct{s};\vct{r})$ never vanishes on $\set{R}\times\set{S}$. (Where the Jacobian is only taken with respect to $\vct{s}$.)

Here we consider the stylized problem of minimizing a signomial obtained from such a test for capacity for multistationarity.
The underlying CRN is from \cite{Pantea2012}.
It consists of six species and twenty reaction rates, although only a handful of these twenty six variables affect capacity for multistationarity over $\set{S} = \R^6_{++}$.
Following the reductions described in \cite{Pantea2012}, we are left with a polynomial $\hat{q}$ in only nine variables.
The data for this polynomial's monomial exponents and coefficients is given in Table~\ref{tab:crn_poly}.
These exponents and coefficients remain the same when we pass to the signomial parameterization $f(\vct{x}) \coloneqq \hat{q}(\exp\vct{x})$.

\begin{table}
\footnotesize\centering
\caption{Monomial exponents and coefficients for the polynomial $\hat{q}$ obtained from our example in chemical reaction network theory. The first seven variables correspond to certain reaction rate parameters and the last two variables correspond to species concentrations.}
\begin{tabular}{|ccccccccc|c||ccccccccc|c|}
$\cdot$ & 1 & $\cdot$ & 1 & 1 & $\cdot$ & 1 & $\cdot$ & $\cdot$ &   1 & 1 & $\cdot$ & $\cdot$ & 1 & 1 & $\cdot$ & 1 & 1 & $\cdot$ &   1 \\
$\cdot$ & $\cdot$ & $\cdot$ & 1 & 1 & $\cdot$ & 1 & $\cdot$ & $\cdot$ &   1 & 1 & $\cdot$ & $\cdot$ & 1 & 1 & $\cdot$ & $\cdot$ & 1 & $\cdot$ &   1 \\
$\cdot$ & 1 & $\cdot$ & $\cdot$ & 1 & $\cdot$ & 1 & $\cdot$ & $\cdot$ &   1 & $\cdot$ & 1 & 1 & $\cdot$ & $\cdot$ & $\cdot$ & $\cdot$ & $\cdot$ & 1 &   1 \\
$\cdot$ & $\cdot$ & $\cdot$ & $\cdot$ & 1 & $\cdot$ & 1 & $\cdot$ & $\cdot$ &   1 & $\cdot$ & $\cdot$ & 1 & $\cdot$ & $\cdot$ & $\cdot$ & $\cdot$ & $\cdot$ & 1 &   1 \\
$\cdot$ & 1 & $\cdot$ & 1 & $\cdot$ & $\cdot$ & 1 & $\cdot$ & $\cdot$ &   1 & $\cdot$ & 1 & $\cdot$ & $\cdot$ & $\cdot$ & 1 & $\cdot$ & $\cdot$ & 1 &   4 \\
$\cdot$ & $\cdot$ & $\cdot$ & 1 & $\cdot$ & $\cdot$ & 1 & $\cdot$ & $\cdot$ &   1 & $\cdot$ & $\cdot$ & $\cdot$ & $\cdot$ & $\cdot$ & 1 & $\cdot$ & $\cdot$ & 1 &   4 \\
$\cdot$ & 1 & $\cdot$ & $\cdot$ & $\cdot$ & $\cdot$ & 1 & $\cdot$ & $\cdot$ &   1 & $\cdot$ & 1 & 1 & $\cdot$ & 1 & $\cdot$ & $\cdot$ & $\cdot$ & 1 &   1 \\
$\cdot$ & $\cdot$ & $\cdot$ & $\cdot$ & $\cdot$ & $\cdot$ & 1 & $\cdot$ & $\cdot$ &   1 & $\cdot$ & $\cdot$ & 1 & $\cdot$ & 1 & $\cdot$ & $\cdot$ & $\cdot$ & 1 &   1 \\
$\cdot$ & 1 & $\cdot$ & 1 & 1 & $\cdot$ & $\cdot$ & $\cdot$ & $\cdot$ &   1 & $\cdot$ & 1 & 1 & $\cdot$ & $\cdot$ & $\cdot$ & 1 & $\cdot$ & 1 &   1 \\
$\cdot$ & $\cdot$ & $\cdot$ & 1 & 1 & $\cdot$ & $\cdot$ & $\cdot$ & $\cdot$ &   1 & $\cdot$ & $\cdot$ & 1 & $\cdot$ & $\cdot$ & $\cdot$ & 1 & $\cdot$ & 1 &   1 \\
$\cdot$ & 1 & $\cdot$ & $\cdot$ & 1 & $\cdot$ & $\cdot$ & $\cdot$ & $\cdot$ &   1 & $\cdot$ & 1 & $\cdot$ & 1 & $\cdot$ & 1 & $\cdot$ & $\cdot$ & 1 &   4 \\
$\cdot$ & $\cdot$ & $\cdot$ & $\cdot$ & 1 & $\cdot$ & $\cdot$ & $\cdot$ & $\cdot$ &   1 & $\cdot$ & $\cdot$ & $\cdot$ & 1 & $\cdot$ & 1 & $\cdot$ & $\cdot$ & 1 &   4 \\
$\cdot$ & 1 & $\cdot$ & 1 & $\cdot$ & $\cdot$ & $\cdot$ & $\cdot$ & $\cdot$ &   1 & $\cdot$ & 1 & 1 & $\cdot$ & 1 & $\cdot$ & 1 & $\cdot$ & 1 &   1 \\
$\cdot$ & $\cdot$ & $\cdot$ & 1 & $\cdot$ & $\cdot$ & $\cdot$ & $\cdot$ & $\cdot$ &   1 & $\cdot$ & $\cdot$ & 1 & $\cdot$ & 1 & $\cdot$ & 1 & $\cdot$ & 1 &   1 \\
$\cdot$ & 1 & $\cdot$ & $\cdot$ & $\cdot$ & $\cdot$ & $\cdot$ & $\cdot$ & $\cdot$ &   1 & 1 & $\cdot$ & 1 & $\cdot$ & $\cdot$ & $\cdot$ & 1 & 2 & $\cdot$ &   1 \\
$\cdot$ & $\cdot$ & $\cdot$ & $\cdot$ & $\cdot$ & $\cdot$ & $\cdot$ & $\cdot$ & $\cdot$ &   1 & 1 & $\cdot$ & 1 & $\cdot$ & $\cdot$ & $\cdot$ & $\cdot$ & 2 & $\cdot$ &   1 \\
1 & $\cdot$ & $\cdot$ & $\cdot$ & $\cdot$ & $\cdot$ & 1 & 1 & $\cdot$ &   1 & 1 & $\cdot$ & 1 & $\cdot$ & $\cdot$ & $\cdot$ & $\cdot$ & 1 & 1 &   1 \\
1 & $\cdot$ & $\cdot$ & $\cdot$ & $\cdot$ & $\cdot$ & $\cdot$ & 1 & $\cdot$ &   1 & 1 & $\cdot$ & $\cdot$ & $\cdot$ & $\cdot$ & 1 & $\cdot$ & 1 & 1 &   4 \\
$\cdot$ & $\cdot$ & 1 & $\cdot$ & $\cdot$ & $\cdot$ & 1 & 1 & $\cdot$ &   1 & 1 & $\cdot$ & 1 & $\cdot$ & $\cdot$ & $\cdot$ & 1 & 1 & 1 &   1 \\
$\cdot$ & $\cdot$ & 1 & $\cdot$ & $\cdot$ & $\cdot$ & $\cdot$ & 1 & $\cdot$ &   1 & 1 & $\cdot$ & $\cdot$ & 1 & $\cdot$ & 1 & $\cdot$ & 1 & 1 &   4 \\
1 & $\cdot$ & $\cdot$ & 1 & $\cdot$ & $\cdot$ & 1 & 1 & $\cdot$ &   1 & 1 & $\cdot$ & 1 & $\cdot$ & 1 & $\cdot$ & $\cdot$ & 1 & 1 &   -1 \\
1 & $\cdot$ & $\cdot$ & 1 & $\cdot$ & $\cdot$ & $\cdot$ & 1 & $\cdot$ &   1 & 1 & $\cdot$ & 1 & $\cdot$ & 1 & $\cdot$ & 1 & 1 & 1 &   -1 \\
1 & $\cdot$ & $\cdot$ & $\cdot$ & 1 & $\cdot$ & 1 & 1 & $\cdot$ &   1 & $\cdot$ & 1 & 1 & $\cdot$ & $\cdot$ & 1 & $\cdot$ & $\cdot$ & 2 &   4 \\
1 & $\cdot$ & $\cdot$ & $\cdot$ & 1 & $\cdot$ & $\cdot$ & 1 & $\cdot$ &   1 & $\cdot$ & $\cdot$ & 1 & $\cdot$ & $\cdot$ & 1 & $\cdot$ & $\cdot$ & 2 &   4 \\
$\cdot$ & 1 & 1 & $\cdot$ & $\cdot$ & $\cdot$ & 1 & 1 & $\cdot$ &   1 & 1 & $\cdot$ & 1 & $\cdot$ & $\cdot$ & 1 & $\cdot$ & 1 & 2 & 4 \\
$\cdot$ & 1 & 1 & $\cdot$ & $\cdot$ & $\cdot$ & $\cdot$ & 1 & $\cdot$ & 1 & & & & & & & & & & \\
\end{tabular}
\label{tab:crn_poly}
\end{table}

Our choice of domain in this minimization example is motivated by \cite[\S 4.3]{MurrayPhD}, which details the use of conditional SAGE to decide positivity of $f$ over sets
\begin{equation}\label{eq:crn_opt_feas_set}
    \X = \{ \vct{x} \in \R^9 \,:\, x_5 = \log a,~ -\log b \leq x_i \leq \log b ~\forall~ i \in [7]\setminus\{5\} \}  
\end{equation}
for $(a,b)$ in a 50-by-50 linearly spaced grid of a box $[1, 10] \times [10^{1/4}, 10^{3/4}]$.
In particular, we apply hierarchy \eqref{eq:hierarchy} with $G = \emptyset$ to minimize $f$ over $\X$ when $(a,b) = (7.06, 2.41)$.
Using this domain maximizes the gap between the lowest-level SAGE bound and $f_{\X}^{\star}$ among all $\X$ of the form \eqref{eq:crn_opt_feas_set}, for the indicated parameter space.
Recall from Remark~\ref{rem:lowest_level} that the lowest-level SAGE bound is independent of $\R[\cA]$ for problems with $G = \emptyset$.

We approach this problem with the natural ring generated by $\cA_{\mathrm{nat}} = \{\vct{0}\}\cup \{\vct{\delta}_i\}_{i=1}^n$, the naive ring generated by $\cA_{\text{naive}} = \supp(f)$, and an intermediate ring generated by ``$\cA_{\mathrm{int}}$,'' which we form by updating $\cA_{\mathrm{nat}}$ to include the two $\evec\in\supp(f)$ with negative coefficients (see Table~\ref{tab:crn_poly}).
The results are reported in Table \ref{tab:exChem}.
The standard solution recovery procedure \cite[Algorithm 1]{MCW2019} certifies finite convergence to $f_{\X}^\star \approx 22.8321$ for the intermediate and naive rings.
It is of note that while the naive-ring relaxations with $G = \emptyset$ were already defined in \cite{MCW2019} and analyzed in \cite{WJYP2020}, using the intermediate ring let us compute $f_{\X}^\star$ in one third of the time than the naive ring (1.11 and 3.46 seconds respectively).
It would be a material advance in signomial and polynomial optimization if one could develop a theory on how to best choose the ring behind a SAGE relaxation.

\begin{table}[ht!]
    \caption{SAGE bounds and solver runtimes for minimization of the polynomial obtained from a chemical reaction network. We indicate the level of the hierarchy \eqref{eq:hierarchy} by specifying $r = d - \deg_{\cA}(f)$, rather than specifying $d$. }
    \small\centering
    \begin{tabular}{c|ccc|ccc|} 
    & \multicolumn{3}{c}{SAGE bounds} & \multicolumn{3}{|c|}{solver runtimes (s)} \\
    $r$ & $\cA_{\mathrm{nat}}$ & $\cA_{\mathrm{int}}$ & $\cA_{\text{naive}}$ & $\cA_{\mathrm{nat}}$ & $\cA_{\mathrm{int}}$ & $\cA_{\text{naive}}$ \\ \hline
    0 & 18.1596  & 18.1596  & 18.1596  & 0.0344 & 0.0301 & 0.0321 \\
    1 & 18.7188  & 22.8321 &  22.8321 & 1.0541 & 1.1123 & 3.4648 \\
    2 & 19.7375  &  - & -  & 49.2000 & - & - \\ 
    \end{tabular}
    \label{tab:exChem}
\end{table}

When we specified this problem in polynomial data to GloptiPoly3, the lowest-level Lasserre relaxation was degree six and returned a bound of $-\infty$ after 14.14 seconds.
The degree eight Lasserre relaxation ran for 572 seconds, reported a marginally infeasible bound of 22.8350, and failed to extract a solution.
This behavior is not unique to our choice of $(a,b)$ in \eqref{eq:crn_opt_feas_set}; the multilinear structure in $\hat{q}$ means that a degree six Lasserre relaxation will return a bound of $-\infty$ for minimizing $\hat{q}$ over any set specified by linear inequalities.
Moving to a degree eight Lasserre relaxation for this nine-variable problem results in a very large semidefinite program.
It is of interest to determine if the recently developed TSSOS (or Chordal TSSOS) approach to sums of squares can take advantage of the sparsity pattern seen in polynomials such as this \cite{WML2021-TSSOS,WML2021-chordal-TSSOS}.

\subsection{Problem scaling}\label{subsec:probsca}

When signomial programs are considered in variables $\vct{t} = \exp \vct{x}$, individual decision often differ by several orders of magnitude at optimality.
One reason for this is that signomial models typically involve physical quantities with particular choices for units.
Although it is \textit{possible} to chose units where decision variables are similarly scaled at optimality, this may not be a natural thing to do from a modeling standpoint.
This creates a need for algorithmic tools for signomial optimization that are insensitive to scaling of the variable $\vct{t}$.
The following proposition shows that our hierarchy has such scale invariance.

\begin{proposition}\label{prop:scale}
    Consider a signomial objective function $f$ and a set of constraint signomials $G \cup G'$ where $\X = \{\vct{x} \,:\, g(\vct{x}) \geq 0 \, \text{ for all }\, g \in G'\}$ is convex.
    Given a vector $\vct{b} \in \R^n$, construct translated problem data
    \begin{itemize}
        \item $f_{\vct{b}}$ defined by $f_{\vct{b}}(\vct{x}) = f(\vct{x} - \vct{b})$,
        \item $G_{\vct{b}} = \{ \vct{x} \mapsto g(\vct{x} - \vct{b}) \,:\, g \in G \}$, and
        \item $\X_{\vct{b}} = \{ \vct{x} \,:\, g(\vct{x} - \vct{b}) \geq 0 \, \text{ for all }\, g \in G'\}$.
    \end{itemize}
    Then the for every $d$ and every signomial ring $\R[\cA]$, $\struc{f_{\set{K}}^{(d)}}$ for Problem~\ref{prob:sig_optim_prob} is the same for problem data $(f,G,\X)$ and $(f_{\vct{b}},G_{\vct{b}},\X_{\vct{b}})$.    
\end{proposition}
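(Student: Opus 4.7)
The plan is to exhibit a correspondence between feasible solutions of the two hierarchies via a translation operator on $\R[\cA]$, thereby reducing the equality of suprema to an invariance of the hierarchy under rescaling of its leading posynomial.

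First I would introduce the translation operator $T_\vct{b}\colon\R[\cA]\to\R[\cA]$ by $(T_\vct{b}h)(\vct{x})=h(\vct{x}-\vct{b})$ and verify that it is a ring automorphism; that it preserves supports, and hence $\cA$-degrees and the sets $\invsupp_d(g)$; that it sends posynomials to posynomials because coefficients rescale by the strictly positive factors $\exp(-\langle\evec,\vct{b}\rangle)$; and that it restricts to a bijection $\csage_\X(A)\leftrightarrow\csage_{\X_\vct{b}}(A)$ for every $A\subset\R^n$. The last property follows by direct inspection of the defining conditions in Proposition~\ref{prop:represent_age_cones}: the relative-entropy summands depend only on coefficients, while the support function of $-\cA\X$ transforms into that of $-\cA\X_\vct{b}$ in a way that exactly compensates for the translation.

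Next I would apply $T_\vct{b}$ to a feasibility identity for the original problem. Starting from $\phi^r(f-\gamma)-\sum_g \lambda_g g = E$, with $\phi=\sum_{\evec\in\cA}\e^\evec$, $E\in\csage_\X(\cA_d)$, and $\lambda_g\in\csage_\X(\invsupp_d(g))$, the multiplicativity of $T_\vct{b}$ and the cone-preservation properties above yield $(T_\vct{b}\phi)^r(f_\vct{b}-\gamma)-\sum_g (T_\vct{b}\lambda_g)g_\vct{b} = T_\vct{b}E$ with $T_\vct{b}E\in\csage_{\X_\vct{b}}(\cA_d)$ and $T_\vct{b}\lambda_g\in\csage_{\X_\vct{b}}(\invsupp_d(g_\vct{b}))$. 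This almost certifies feasibility of $\gamma$ in the translated hierarchy, except that the leading posynomial is $T_\vct{b}\phi=\sum_{\evec\in\cA}\exp(-\langle\evec,\vct{b}\rangle)\e^\evec$ instead of $\phi$. Both posynomials share support $\cA$ and have strictly positive coefficients, but they are not scalar multiples of one another, so one cannot absorb the difference into the $\lambda_g$ by rescaling alone.

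The hardest step is therefore to establish the following invariance: the value $f_{\set{K}}^{(d)}$ is unchanged if $\phi^r$ is replaced by $\psi^r$ for any posynomial $\psi\in\R[\cA]_1$ with $\supp\psi=\cA$ and strictly positive coefficients. I would approach this using the pointwise sandwich $c_{\min}\phi\leq\psi\leq c_{\max}\phi$ combined with the closure of $\X$-SAGE cones under multiplication by posynomials, which holds because a posynomial monomial times an $\X$-AGE signomial is itself $\X$-AGE after a translation of support. Writing $\psi^r=c_{\min}^r\phi^r+P$ for a nonnegative-coefficient posynomial $P\in\R[\cA]_r$, and the analogous identity in the reverse direction, one can convert any certificate based on $\phi^r$ into one based on $\psi^r$ and back by rescaling the $\lambda_g$ and absorbing the cross terms $P(f-\gamma)$ into the $\csage_\X(\cA_d)$ component of the Minkowski sum $\csage_\X(\cA_d)+\sum_g g\cdot\csage_\X(\invsupp_d(g))$. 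With this invariance in hand, applying it to $\psi=T_\vct{b}\phi$ and arguing symmetrically via $T_{-\vct{b}}=T_\vct{b}^{-1}$ gives the stated equality $f_{\set{K}}^{(d)}(f,G,\X)=f_{\set{K}}^{(d)}(f_\vct{b},G_\vct{b},\X_\vct{b})$ for every $d$ and every signomial ring $\R[\cA]$.
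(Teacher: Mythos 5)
Your opening moves are correct and coincide with the paper's: the translation operator $T_{\vct{b}}h = h(\cdot-\vct{b})$ is a ring automorphism preserving supports, $\cA$-degrees, and the sets $\invsupp_d(\cdot)$; it sends posynomials to posynomials and restricts to a bijection between $\csage_{\X}(A)$ and $\csage_{\X_{\vct{b}}}(A)$. You are also right that applying $T_{\vct{b}}$ to a feasible identity yields a certificate whose modulator is $T_{\vct{b}}\phi=\sum_{\evec\in\cA}\exp(-\langle\evec,\vct{b}\rangle)\e^{\evec}$ rather than $\phi=\sum_{\evec\in\cA}\e^{\evec}$. The paper's own proof simply translates every object in sight and asserts feasibility is preserved, without commenting on this change of modulator, so you have made the real content of the statement more explicit than the published argument does.

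The difficulty is that your resolution of the step you call hardest does not go through. Writing $\psi^r=c_{\min}^r\phi^r+P$ with $P$ a posynomial is fine, but the cross term you must then absorb is $P\cdot(f-\gamma)$, a posynomial times the \emph{objective residual}. There is no reason for $P\cdot(f-\gamma)$ to lie in $\csage_{\X}(\cA_d)+\sum_{g\in G}g\cdot\csage_{\X}(\invsupp_d(g))$: the signomial $f-\gamma$ is not itself $\X$-SAGE, and if arbitrary posynomial multiples of $f-\gamma$ were certifiable at level $d$ without the full modulator, the modulator $\phi^r$ would be superfluous and the hierarchy would collapse to its lowest level. The closure property you invoke --- a posynomial times an $\X$-AGE signomial is $\X$-SAGE --- applies to the certificate $E$, not to $f-\gamma$. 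The pointwise sandwich $c_{\min}\phi\leq\psi\leq c_{\max}\phi$ also buys nothing, since membership in a SAGE cone is not monotone under pointwise domination. Consequently the claimed invariance of $f_{\set{K}}^{(d)}$ under replacing $\phi^r$ by $\psi^r$ is not established, and the argument is incomplete exactly where you flagged it. A repair must either prove that the two modulators yield the same feasible set of $\gamma$ by some genuinely different mechanism, or exhibit a correspondence between feasible solutions that fixes the modulator outright --- which is what the paper's one-sentence ``map any feasible solution'' step tacitly presupposes.
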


We should emphasize that scale invariance of a SAGE bound does not mean that the behavior of algorithms for relative entropy programming are fully scale invariant.
As with any type of numerical convex optimization, changes to problem scaling in finite precision arithmetic can affect both the speed at which an REP solver converges and even whether the solver converges at all.
This proposition really shows that we are free to choose a coordinate system that works well for an REP solver without fear of changing the SAGE bound.

\begin{proof}[Proof of Proposition \ref{prop:scale}]
     Let $h$ be a signomial on $\R^n$ and consider $h_{\vct{b}}$ defined by $h_{\vct{b}}(\vct{x}) = h(\vct{x} - \vct{b})$.
     It is easy to verify that $h$ is $\X$-SAGE if and only if $h_{\vct{b}}$ is $[\X+\vct{b}]$-SAGE.
     Additionally, it is clear that $\supp(h) = \supp(h_{\vct{b}})$, and this implies both $\deg_{\cA}(h) = \deg_{\cA}(h_{\vct{b}})$ and $\invsupp_{d}(h) = \invsupp_{d}(h_{\vct{b}})$.
     Finally, observe that $\X_{\vct{b}} = \X + \vct{b}$.
     
     Using these facts we can map any feasible solution to problem \ref{eq:hierarchy} for data $(f,G,\X)$ to a feasible solution to the analogous problem for data $(f_{\vct{b}},G_{\vct{b}},\X_{\vct{b}})$ without changing $\gamma$.
     By symmetry (essentially replacing $\vct{b}$ by $-\vct{b}$) any solution to problem \ref{eq:hierarchy} for data $(f_{\vct{b}},G_{\vct{b}},\X_{\vct{b}})$ can likewise be mapped to a feasible solution for problem data $(f,G,\X)$ without changing $\gamma$.
     As the set of feasible choices for $\gamma$ is the same under these two formulations, we have that the $\cA$-degree $d$ SAGE bounds coincide.
\end{proof}

\subsection{Design of a chemical reactor system}\label{subsec:CSTR}

Here we consider the design of a chemical reactor system as described by Blau and Wilde in \cite{BW1969-sig-ChemE} and \cite{BW1971}.
This problem is a proper signomial program and we approach it through the naive ring.
None of the constraints in this problem are convex in $\vct{x}$, however we can infer convex constraints by considering the case $g(\vct{x}) \geq 0$ in each of the constraints $g(\vct{x}) = 0$.
We apply the hierarchy \eqref{eq:hierarchy} to this problem by taking $\X$ as the convex set cut out by these five inequality constraints.

\begin{align}
    \min_{\vct{t} \in \R^8_{++}} ~~& 2.0425\, t_{1}^{0.782} + 52.25\, t_{2} + 192.85\, t_{2}^{0.9} + 5.25\, t_{2}^3 + 61.465\, t_{6}^{0.467} \label{prob:ex2} \\[-0.75em]
    & \qquad + 0.01748\,  t_{3}^{1.33} / t_{4}^{0.8} + 100.7\, t_{4}^{0.546} + 3.66 {\cdot 10}^{-10} \, t_{3}^{2.85}/ t_{4}^{1.7} \nonumber \\
    & \qquad + 0.00945\, t_{5} + 1.06 {\cdot 10}^{-10}\, t_{5}^{2.8}/t_{4}^{1.8} + 116\, t_{6} - 205\, t_{6} t_{7} - 278\, t_{2}^3 t_{7} \nonumber \\[0.25em]
    \text{s.t.}~~ & 1 - 129.4/t_{2}^3 - 105/t_{6} = 0 \nonumber \\
    & 1 - 1.03\cdot 10^5 \, t_{2}^3 t_{7}/(t_{3} t_{8}) - 1.2\cdot 10^6 /(t_{3} t_{8}) = 0 \nonumber \\
    & 1 - 4.68\, t_{2}^3/t_{1} - 61.3\, t_{2}^2/t_{1} - 160.5\, t_{2}/t_{1} = 0 \nonumber \\
    & 1 - 1.79\, t_{7} - 3.02\, t_{2}^3 t_{7}/t_{6} - 35.7\, /t_{6} - 1 = 0 \nonumber \\
    & 1 - 1.22\cdot 10^{-3}\, t_{3} t_{8}/(t_{4}^{0.2} t_{5}^{0.8}) - 1.67\cdot 10^{-3}\, t_{8} t_{3}^{0.4}/ t_{4}^{0.43} \nonumber \\[-0.0em] & \qquad - 3.6 {\cdot 10}^{-5}\, t_{3} t_{8}/t_{4} - 2\cdot 10^{-3}\, t_{3} t_{8}/t_{5} - 4\cdot 10^{-3}\, t_{8} = 0 \nonumber
\end{align}

This initial problem statement is terribly scaled -- the coefficients in the objective alone span twelve orders of magnitude.
Trying to solve even lowest-level SAGE relaxation with MOSEK returns ``unknown'' status codes here.
We therefore scale the variables about the initial estimates provided in \cite{BW1971}
\[
    \vct{\tilde{t}} = (10^3,~ 10, ~ 10^5, ~ 10^2, ~10^5,~ 10^3,~ 10^{-1}, 10)
\]
and we call solvers with a scaled objective $\hat{f} \coloneqq f / 10^4$.

The coefficients in the scaled problem span only four orders of magnitude and the SAGE relaxations can be solved reliably.
We compute
\begin{align*}
    & f_{\set{K}}^{(1)} = 16377.32 \quad\text{ in }\quad ~0.13\, \text{ seconds, and} \\
    & f_{\set{K}}^{(2)} = 17462.73 \quad\text{ in }\quad 24.37 \text{ seconds.}
\end{align*}
We run solution recovery on the dual formulation for $f_{\set{K}}^{(2)}$ to obtain a point $\vct{x'}$, and refine this with the (zeroth-order) COBYLA local solver to get $\vct{x''}$ \cite{Powell1994}.
These solutions satisfy 
\begin{align*}
    f(\vct{x'}) &= 17486.52 \quad\text{ and }\quad \|G(\vct{x'})\|_{\infty} = 2.05 \cdot 10^{-5}, \text{ as well as } \\
    f(\vct{x''}) &= 17485.99 \quad\text{ and }\quad \|G(\vct{x''})\|_{\infty} = 5.85 \cdot 10^{-15}
\end{align*}
where we have abused notation by writing $G(\vct{x}) \coloneqq (g(\vct{x}))_{g \in G}$.
The point $\vct{x''}$ is feasible to nearly machine precision and so we can reasonably conclude $f(\vct{x''}) \geq f_{\set{K}}^\star$.
We combine this with the SAGE bound to obtain $(f_{\set{K}}^\star - f_{\set{K}}^{(2)}) / f_{\set{K}}^\star \leq 0.0013$.
That is, the $\cA$-degree 2 SAGE relaxation solves \eqref{prob:ex2} within one percent relative error.


One can alternatively approach this problem through a global solver from the traditional nonlinear programming community.
We tested BARON, ANTIGONE, LINDO, and SCIP -- which together are four out of the five global nonlinear solvers in the Mittelmann benchmarks.\footnote{The fifth solver (COUENNE) was not available in our version of GAMS (33.2.0).}
We ran each of these solvers by passing it \eqref{prob:ex2} once in variables $\vct{t}$ and once in variables $\vct{x}$.
When passing the problem in variables $\vct{t}$ we had to disable warnings from GAMS about unbounded monomials with negative exponents.
For all configurations we used a time limit of 7200 seconds, allocated 8 threads, and left the machine otherwise unused.

In both parameterizations, SCIP terminated after 7200 seconds with no feasible solution and no lower bound.
Precise results for the remaining solvers are reported  in Table \ref{tab:globalSolvers}.
The overall takeaway is that SAGE produced the same solution as these solvers, but with an REP that could be solved in half the time as the fastest of these methods.
Only LINDO was able to certify its solution as globally optimal.
By contrast with LINDO, the performance of SAGE is independent of whether signomials are considered as generalized polynomials in $\vct{t}$ or as functions of $\vct{x}$.

\begin{table}[ht!]\label{tab:globalSolvers}
    \small\centering\caption{Results of applying global solvers from GAMS to a reactor design problem in chemical engineering \eqref{prob:ex2}. All solvers returned a solution with objective value approximately equal to $17485.99$. }
    \begin{tabular}{l|rr|rr|}
             & \multicolumn{2}{l}{Using $\vct{t}$ as optimization variable} & \multicolumn{2}{|l|}{Using $\vct{x}$ as optimization variable}  \\
             & \multicolumn{1}{l}{solver time (s)} & lower bound                    & \multicolumn{1}{l}{solver time (s)} & lower bound  \\ \hline
    BARON    & 163                                 & $-\infty$                      & 7200                                & $-\infty$      \\
    ANTIGONE & 145                                 & {$-16880.380$} & 7200                                & $-\infty$                     \\
    LINDO    & 1468                                & {$17484.314$}  & 50                                  & {$17485.988$} \\ 
    \end{tabular}
\end{table}

\section{Nonnegativity and signomial moments}\label{sec:sig_moments}
\label{sec:moment}
%
%

In this section we apply basic functional analysis to signomial nonnegativity problems.
We begin with definitions that are analogous to those in the moment-SOS literature for polynomial optimization.
We then state the section's main theorem -- a method to develop successively stronger outer-approximations for the cone of signomials in $\R[\cA]$ that are nonnegative on a compact set $\set{K}$.
Unlike in previous sections, the set $\set{K}$ is not defined by a convex set $\X$ and signomial constraint functions $G$.
In order to prove our approximation result we establish two basic facts regarding existence and uniqueness of representing measures for signomial moment sequences.

\subsection{Definitions}\label{subsec:moment_defs}

Throughout this section and the next, we use $\struc{\cA_{\infty}}$ to denote the smallest subset of $\R^n$ that contains all the $\cA_d$.
It is clear that signomials $f \in \R[\cA]$ are in one-to-one correspondence with finitely supported sequences $\struc{\vct{f}} = (f_{\evec})_{\evec\in\cA_{\infty}}$.
Most of our arguments in this section focus on the dual space to $\R[\cA]$, which we identify with $\R^{\cA_{\infty}}$.
Sequences $\vct{y} \in \R^{\cA_{\infty}}$ are associated to linear functions \struc{$L_{\vct{y}} \colon \R[\cA] \to \R $} defined by $L_{\vct{y}}(\e^{\evec}) = y_{\evec}$.
We call $L_{\vct{y}}$ the \textit{Riesz functional} of $\vct{y}$.

We are most interested in Riesz functionals that are induced by \textit{moment sequences}.
That is, when there is a Borel measure for which
\[
    y_{\evec} = \int \e^{\evec}(\vct{x}) \diff\mu(\vct{x}) \quad \foralll \quad \evec \in \cA_{\infty}
\]
-- in which case we have $L_{\vct{y}}(f) = \int f(\vct{x})\diff\mu(\vct{x})$.
To describe these measures, we use $\mathscr{B}$ to denote the smallest family of subsets of $\R^n$ that contains all compact subsets of $\R^n$ and that is closed under finite union, set-theoretic difference, and countable intersection.
A (finite) Borel measure $\mu$ is a nonnegative set function on $\mathscr{B}$ such that $\mu(\emptyset) = 0$, $\mu(\R^n) < +\infty$, and $\mu(\cup_{i=1}^{\infty} \set{B}_i) = \sum_{i=1}^{\infty} \mu(\set{B}_i)$ for any disjoint collection of sets in $\mathscr{B}$.
The support of such a measure (denoted $\supp\mu$) is the unique smallest closed $\set{B} \in \mathscr{B}$ for which $\mu(\R^n \setminus \set{B}) = 0$.

Next, we introduce a concept directly analogous to the ``localizing matrix'' in the moment-SOS literature.
In our case, the \struc{\textit{localizer}} induced by a sequence $\vct{y} \in \R^{\cA_{\infty}}$ and a dimensional parameter $d \in {\N}\cup\{+\infty\}$ is the linear operator
\[
   \struc{V_d(\ \cdot \ \vct{y})}\colon \R[\cA] \to \R^{\cA_{d}} \qquad \text{ defined by } \qquad f \mapsto  (L_{\vct{y}}(f \e^{\evec}) \,:\, \evec \in \cA_d).
\]
We abbreviate the case $V_d(\e^{\vct{0}}\vct{y})$ by $\struc{V_d(\vct{y})}$.

Localizers help us make abstract arguments concrete.
For example, a localizer can truncate infinite sequences $\vct{y} \in \R^{\cA_{\infty}}$ to $V_d(\vct{y}) = (y_{\evec} \,:\, \evec \in \cA_d)$.
One can also see that if the $\cA$-degree of a signomial $f$ is at most $d$, then we can identify $f$ by a vector of coefficients $\vct{f} \in \R^{\cA_d}$ and evaluate a Riesz functional by $L_{\vct{y}}(f) = \langle \vct{f}, V_{d}(\vct{y}) \rangle$.
That second point is important:
given a convex cone $C \subset \R[\cA]_{d}$, the condition that $V_{d}(\vct{y}) \in C^\dagger$ is equivalent to $L_{\vct{y}}(f) \geq 0$ for all $f \in C$.

\subsection{Approximating nonnegativity cones from the outside}

One of this article's main contributions is to show how essentially any \positiv{} can be turned around to obtain arbitrarily strong \textit{outer approximations} of nonnegativity cones.
The following definition helps us state our results on this topic as well as further results in Section \ref{sec:upper_bounds}.

\begin{definition}\label{def:AK_complete}
    A sequence of closed convex cones $(C_d)_{d \geq 1}$ is called \emph{$(\cA,\set{K})$-complete} if \emph{(i)} $C_d \subset \R[\cA]_{d}$, \emph{(ii)} every $f \in C_d$ is $\set{K}$-nonnegative, and \emph{(iii)} for every $\set{K}$-positive $f \in \R[\cA]$, there exists some $d$ for which $f \in C_d$.
\end{definition}

Here is this section's main theorem.

\begin{theorem}\label{thm:outer_rep}
    Let $\cA$ be injective, $\set{K}$ be compact, and $(C_d)_{d \geq 1}$ be $(\cA,\set{K})$-complete.
    If $\vct{y}$ is the moment sequence of a Borel measure $\mu$ with $\supp \mu = \set{K}$,
    then $f \in \R[\cA]$ is $\set{K}$-nonnegative if and only if $V_{d}(f \vct{y}) \in C_d^\dagger$ for all integers $d \geq 1$.
\end{theorem}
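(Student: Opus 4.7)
\medskip
\noindent\textbf{Proof plan for Theorem \ref{thm:outer_rep}.}

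The plan is to handle both implications by unpacking the definition of a dual cone and exploiting two facts: the support-of-measure hypothesis gives that $L_{\vct{y}}(h) = \int h \diff\mu$ annihilates $\set{K}$-null functions, and the injectivity of $\cA$ lets us invoke Stone--Weierstrass to approximate continuous functions on $\set{K}$ by signomials in $\R[\cA]$.

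\emph{Forward direction (easy).} Assume $f$ is $\set{K}$-nonnegative. For each $d \geq 1$ and each $h \in C_d$, property (ii) of $(\cA,\set{K})$-completeness gives that $h$ is $\set{K}$-nonnegative, so $f h \geq 0$ on $\set{K} = \supp\mu$. Unraveling the localizer, $\langle \vct{h}, V_d(f\vct{y})\rangle = L_{\vct{y}}(fh) = \int fh \diff\mu \geq 0$, which is exactly the statement $V_d(f\vct{y}) \in C_d^\dagger$.

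\emph{Reverse direction (the substantive step).} Assume $V_d(f\vct{y}) \in C_d^\dagger$ for every $d$. Property (iii) of completeness says that every $\set{K}$-positive $h \in \R[\cA]$ lies in some $C_d$. Combined with the forward unpacking, this yields
\[
    \int f h \diff \mu \geq 0 \qquad \text{for every } \set{K}\text{-positive } h \in \R[\cA].
\]
I will show this forces $f \geq 0$ on $\set{K}$ by contradiction. If $f(\vct{x}^\star) < 0$ for some $\vct{x}^\star \in \set{K}$, continuity gives a neighborhood $U \ni \vct{x}^\star$ and a $\delta > 0$ with $f < -\delta$ on $U \cap \set{K}$. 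Choose a continuous $\phi \colon \set{K} \to [0,1]$, supported in $U$, with $\phi(\vct{x}^\star) = 1$; then $\int f\phi \diff\mu < 0$ because $\supp\mu = \set{K}$ forces $\mu$-mass near $\vct{x}^\star$. Pick $\epsilon > 0$ small enough that $I := \int f(\phi+\epsilon)\diff\mu < 0$.

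The remaining step is to realize $\phi+\epsilon$ as a uniform limit of $\set{K}$-positive signomials. Injectivity of $\cA$ implies the basis functions $\{\e^{\evec}\}_{\evec \in \cA}$ separate points of $\R^n$; since $\R[\cA]$ is an $\R$-algebra containing the constants (note $\vct{0} \in \cA$), Stone--Weierstrass gives density of $\R[\cA]$ in $C(\set{K})$. Approximate $\phi + \epsilon$ uniformly on $\set{K}$ by a sequence $h_n \in \R[\cA]$; for $n$ large each $h_n > \epsilon/2$ on $\set{K}$, hence is $\set{K}$-positive, and $\int f h_n \diff\mu \to I < 0$, contradicting the displayed inequality.

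\emph{Where the difficulty lies.} The forward direction and the Stone--Weierstrass application are routine; the only point requiring care is confirming that Stone--Weierstrass is actually available under the stated hypotheses. This is precisely where the injectivity assumption on $\cA$ (together with $\vct{0} \in \cA$ and closure of $\R[\cA]$ under products) is consumed. With that in hand, the construction of the bump-like $\set{K}$-positive multiplier is a standard density argument, and the strict positivity needed so that the approximants land in $\cup_d C_d$ is secured by adding the constant $\epsilon$ before approximating.
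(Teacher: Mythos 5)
Your proof is correct, and it takes a genuinely different, more elementary route than the paper's. For the reverse direction the paper first converts the hypothesis into the statement that $L_{\vct{\hat y}}(g)\geq 0$ for every $\set{K}$-nonnegative $g$ (where $\hat y_{\evec}=\int \e^{\evec} f\diff\mu$), then invokes a signomial Riesz--Haviland theorem (Proposition \ref{prop:sigRHT}) to produce a representing Borel measure $\psi$ for $\vct{\hat y}$ on $\set{K}$, and finally uses a signed-measure decomposition together with moment determinacy (Proposition \ref{prop:unique_rep_meas}) and Lasserre's Lemma 3.1 to conclude that $f\diff\mu=\diff\psi$ is a positive measure, hence $f\geq 0$ on $\supp\mu=\set{K}$. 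You bypass all of that machinery: from $\int f h\diff\mu\geq 0$ for every $\set{K}$-positive $h\in\R[\cA]$ you argue by contradiction with a Urysohn bump $\phi$ localized where $f<-\delta$, shift by a small constant $\epsilon$ to stay strictly positive, and approximate $\phi+\epsilon$ uniformly by signomials via Stone--Weierstrass (which is exactly where injectivity of $\cA$ and $\vct{0}\in\cA$ are consumed, the same separation argument the paper uses inside Proposition \ref{prop:unique_rep_meas}). The hypothesis $\supp\mu=\set{K}$ enters in the same essential way in both arguments, to guarantee positive $\mu$-mass near the putative negativity point. What the paper's longer route buys is the moment-theoretic infrastructure (existence and determinacy of representing measures) that it develops anyway and reuses in Sections \ref{sec:moment} and \ref{sec:upper_bounds}; what your route buys is a self-contained two-step argument that needs neither Proposition \ref{prop:sigRHT} nor the signed-measure manipulation. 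Your forward direction is the same as the paper's, just stated as a direct unpacking of the dual-cone condition rather than via the observation that $f\diff\mu$ is itself a Borel measure.
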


We prove this theorem  in Section~\ref{subsec:proof_of_outer_rep}.
It requires two intermediate results that we establish in Section~\ref{subsec:elementary_signomial_moments}, and otherwise closely follows arguments in \cite{Lasserre2011}.
In order to appreciate Theorem~\ref{thm:outer_rep} it is helpful to consider specific examples of $(\cA,\set{K})$-complete sequences.
Below we provide two examples where the book-keeping in the $(\cA,\set{K})$-complete sequence is relatively simple.
We revisit these examples in Section \ref{sec:upper_bounds}, where we provide precise expressions for dual cones $C_d^\dagger$.

\begin{example}\label{ex:AK_complete_1}
    Let $(\cA, G)$ satisfy the requirements of Theorem~\ref{prop:ord_sage_constr_positiv} (e.g., we must have $\cA \subset \Q^n$ and $|G| \geq 2|\cA|$) and set $\set{K} = \{ \vct{x}\,:\, g(\vct{x}) \geq 0 \text{ for all } g \in G \}$. 
    Consider the parameterized sets of signomials $R_q(G)$ in equation \eqref{eq:def_RqG}.
    For each $d \geq 1$ and $h \in R_d(G)$, we abbreviate $d[h] = d - \deg_{\cA}(h)$ and define
    \begin{align*}
        C_d \coloneqq& \left\{ \textstyle\sum_{h \in R_d(G)} \lambda_h \cdot h \,:\, \lambda_h \in \csage_{\R^n}(\cA_{d[h]}) ~ \forall~ h \in R_d(G) \right\}.
    \end{align*}
    For the purposes of this example one should understand $\cA_{0} = \{\vct{0}\}$ and $\cA_{\ell} = \emptyset$ when $\ell < 0$.
    The sequence $\mathscr{C} = (C_d)_{d\geq 1}$ is $(\cA,\set{K})$-complete. \hfill $\blacklozenge$
\end{example}

\begin{example}\label{ex:AK_complete_2}
    Let $\set{K}$ be a compact convex set.
    To a parameter $d \geq 1$ and a signomial $g \in \R[\cA]_{d}$, associate the cone $C_d(g) = \{ f \in \R[\cA]_{d} \,:\, g f \text{ is } \set{K}\text{-SAGE}\,\}$.
    The sequence of cones defined by Minkowski sums
    \[
        C_d \coloneqq \sum_{\ell=1}^d C_d\left((\textstyle\sum_{\evec\in\cA}\e^{\evec})^{d-\ell}\right)
    \]
    is $(\cA,\set{K})$-complete by Corollary \ref{cor:newPositiv}. \hfill $\blacklozenge$
\end{example}

\begin{corollary}\label{cor:outer_rep:as_cones}
    Let $\cA$ be injective, $\set{K}$ be compact, and $\vct{y}$ be the moment sequence of a Borel measure with support $\set{K}$.
    Suppose $(C_{\ell})_{\ell \geq 1}$ is an $(\cA,\set{K})$-complete sequence that satisfies $C_{\ell} \subset C_{\ell+1}$ for all $\ell$ (such as in Examples \ref{ex:AK_complete_1} or \ref{ex:AK_complete_2}) and denote $C_{\infty} = \cup_{\ell\geq 1} C_{\ell}$.
    If $P_d$ denotes the cone of $\set{K}$-nonnegative signomials in $\R[\cA]_{d}$ and
    \[
        Q_{\ell} \coloneqq \left\{ f \in \R[\cA]_{d} \,:\, V_{\ell}(f\vct{y}) \in C_{\ell}^\dagger \right\}
    \]
    then $Q_{1} \supset Q_{2} \supset \cdots \supset Q_{\infty} = P_d$.
\end{corollary}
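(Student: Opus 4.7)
The plan is to read the corollary as a direct repackaging of Theorem~\ref{thm:outer_rep}. Specifically, I will first establish the nested inclusions $Q_1 \supset Q_2 \supset \cdots$ from monotonicity of $(C_\ell)_\ell$ alone, and then identify $Q_\infty$ (interpreted as $\bigcap_{\ell \geq 1} Q_\ell$) with $P_d$ by invoking Theorem~\ref{thm:outer_rep} in both directions.

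For the monotonicity, the key observation is that for any $f \in \R[\cA]_d$ and any $\ell$, the condition $V_{\ell}(f\vct{y}) \in C_\ell^\dagger$ is equivalent to $L_{\vct{y}}(fg) = \langle \vct{g}, V_\ell(f\vct{y})\rangle \geq 0$ for every $g \in C_\ell$, since each such $g$ is identified with its coefficient vector $\vct{g} \in \R^{\cA_\ell}$. Because $C_\ell \subset C_{\ell+1}$, any $f$ that passes the test against every $g \in C_{\ell+1}$ automatically passes it against every $g \in C_\ell$. This yields $Q_{\ell+1} \subset Q_\ell$ for all $\ell \geq 1$.

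For the identification $Q_\infty = P_d$, the inclusion $P_d \subset Q_\infty$ is immediate: each $g \in C_\infty$ is $\set{K}$-nonnegative by clause (ii) of Definition~\ref{def:AK_complete}, so if $f$ is $\set{K}$-nonnegative then $fg$ is $\set{K}$-nonnegative, and since $\mu$ is a nonnegative Borel measure supported on $\set{K}$ we get $L_{\vct{y}}(fg) = \int fg \diff\mu \geq 0$, i.e.\ $V_\ell(f\vct{y}) \in C_\ell^\dagger$ for every $\ell$. The reverse inclusion $Q_\infty \subset P_d$ is precisely where Theorem~\ref{thm:outer_rep} does the work: if $f \in Q_\infty$ then $V_\ell(f\vct{y}) \in C_\ell^\dagger$ for every integer $\ell \geq 1$, and the theorem (whose hypotheses on $\cA$, $\set{K}$, $\vct{y}$, and $(C_\ell)_\ell$ match ours verbatim) yields that $f$ is $\set{K}$-nonnegative, hence $f \in P_d$.

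I do not anticipate a genuine obstacle, since the hypotheses of the corollary are engineered to match those of Theorem~\ref{thm:outer_rep}. The only bookkeeping point worth being explicit about is that in comparing $V_\ell$ with $V_{\ell+1}$, one uses the natural embedding $\R^{\cA_\ell} \hookrightarrow \R^{\cA_{\ell+1}}$ coming from $\cA_\ell \subset \cA_{\ell+1}$; this lets us view $\vct{g} \in C_\ell$ as an element of $\R[\cA]_{\ell+1}$ without changing $\langle \vct{g}, V_\bullet(f\vct{y})\rangle$, and makes the monotonicity argument above unambiguous.
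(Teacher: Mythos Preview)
Your proposal is correct. The paper states this corollary without proof, leaving it as an immediate consequence of Theorem~\ref{thm:outer_rep}; your argument is precisely the intended unpacking, with the monotonicity $Q_{\ell+1}\subset Q_\ell$ coming from $C_\ell\subset C_{\ell+1}$ and both inclusions in $Q_\infty = P_d$ reducing to the two directions of Theorem~\ref{thm:outer_rep} (indeed, you could simply cite the theorem for $P_d\subset Q_\infty$ rather than reproving it, since that direction is established verbatim in the first paragraph of the proof in Section~\ref{subsec:proof_of_outer_rep}).
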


\subsection{Elementary results in signomial moment theory}\label{subsec:elementary_signomial_moments}

Here we present a simple result concerning when a sequence $\vct{y} \in \R^{\cA_{\infty}}$ admits a representing measure on some closed set $\set{K}$.
We also present a condition for when the representing measure is unique.

\begin{proposition}\label{prop:sigRHT}
    Suppose $\set{K} \subset \R^n$ is closed and that for every unbounded sequence $(\vct{x}_t)_{t \in \N} \subset \set{K}$, we have $\limsup_{t} \max_{\evec \in \cA} \langle\evec,\vct{x}_t\rangle = +\infty$.
    Given a sequence $\vct{y} \in \R^{\cA_{\infty}}$, there exists a Borel measure $\mu$ on $\set{K}$ such that
    \[
        \int \e^{\evec}(\vct{x}) \diff \mu(\vct{x}) = y_{\evec} \quad \foralll \quad \evec \in \cA_{\infty}
    \]
    if and only if $L_{\vct{y}}(f) \geq 0$ for all signomials $f \in \R[\cA]$ nonnegative on $\set{K}$.
\end{proposition}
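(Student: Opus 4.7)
The plan is to handle the two directions separately, with the non-trivial direction reducing to Marshall's generalized Riesz-Haviland theorem \cite{marshall2003}. The ``only if'' direction is immediate: if $\mu$ is a representing measure for $\vct{y}$ on $\set{K}$ and $f = \sum_{\evec} c_{\evec}\e^{\evec} \in \R[\cA]$ is $\set{K}$-nonnegative, then linearity of integration gives $L_{\vct{y}}(f) = \sum_{\evec} c_{\evec} y_{\evec} = \int f \diff \mu \geq 0$.

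For the converse, the strategy is to invoke Marshall's theorem. In the form relevant here, that result takes as input a unital subalgebra $A$ of continuous real-valued functions on a locally compact Hausdorff space, the existence of a distinguished element $p \in A$ with $p \geq 1$ everywhere and with every sublevel set $\{p \leq N\}$ compact, and a linear functional $L\colon A \to \R$ that is nonnegative on the cone of nonnegative elements of $A$. The conclusion is that $L$ is given by integration against a Borel measure on the underlying space. I would apply this with underlying space $\set{K}$ (closed in $\R^n$ and hence locally compact), with $A$ the algebra of restrictions of $\R[\cA]$ to $\set{K}$ (unital since $\vct{0} \in \cA$ forces $\e^{\vct{0}} = 1 \in A$), and with $L = L_{\vct{y}}$.

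The key verification is that $p \coloneqq \sum_{\evec \in \cA}\e^{\evec}$ serves as a valid distinguished function on $\set{K}$. Since $\vct{0} \in \cA$, pointwise $p \geq \e^{\vct{0}} = 1$. To show that each sublevel set $\set{L}_N \coloneqq \{\vct{x} \in \set{K} : p(\vct{x}) \leq N\}$ is compact, note it is closed as the intersection of the closed set $\set{K}$ with the continuous preimage $p^{-1}([0,N])$; for boundedness, suppose for contradiction that some $\set{L}_N$ contained an unbounded sequence $(\vct{x}_t)$. The proposition's hypothesis yields $\limsup_t \max_{\evec \in \cA}\langle\evec,\vct{x}_t\rangle = +\infty$, and since $p(\vct{x}_t) \geq \exp\bigl(\max_{\evec \in \cA}\langle\evec,\vct{x}_t\rangle\bigr)$, this contradicts $p(\vct{x}_t) \leq N$. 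So the moderating hypothesis holds.

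The main obstacle, to the extent there is one, is matching Marshall's abstract formulation to our concrete signomial setting; once $p$ is identified as the required moderating function and $A$ is confirmed to be a unital subalgebra of $C(\set{K}, \R)$, the theorem directly supplies a Borel measure $\mu$ on $\set{K}$ with $L_{\vct{y}}(f) = \int f\diff\mu$ for all $f \in \R[\cA]$, and specializing to $f = \e^{\evec}$ yields the desired moment identities for every $\evec \in \cA_{\infty}$.
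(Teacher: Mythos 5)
Your proposal is correct and follows essentially the same route as the paper: both reduce the nontrivial direction to Marshall's generalized Riesz--Haviland theorem applied to $A = \R[\cA]$ restricted to $\set{K}$, with the distinguished signomial $p = \sum_{\evec\in\cA}\e^{\evec}$, and both verify compactness of the sublevel sets $\{\vct{x}\in\set{K} : p(\vct{x})\leq N\}$ via the hypothesis on unbounded sequences. The only cosmetic difference is that you spell out the trivial ``only if'' direction and the bound $p(\vct{x}) \geq \exp\bigl(\max_{\evec\in\cA}\langle\evec,\vct{x}\rangle\bigr)$ explicitly, which the paper leaves implicit.
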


Clearly, Proposition~\ref{prop:sigRHT} applies when $\set{K}$ is compact.
What of the noncompact cases?
If $\set{K}$ is convex, then the hypothesis is satisfied if and only if the intersection of the recession cone of $-\cA \set{K}$ and $\R^{\cA}_{+}$ consists only of the origin.
If $(\cA,\set{K})$ satisfy the hypothesis of Proposition~\ref{prop:sigRHT}, then for any $\set{K}'\subset\set{K}$ we know that the proposition also holds for $(\cA,\set{K}')$.
We may conclude that if $\cA$ contains the origin in the interior of its convex hull, then $(\cA,\set{K})$ satisfies the hypothesis of Proposition~\ref{prop:sigRHT} for any $\set{K} \subset \R^n$.

We call a measure \textit{determinate} if it is the unique Borel measure that gives rise to its moment sequence.
It is well known that in the polynomial case, measures supported on compact sets are determinate.
The same is true for signomials.

\begin{proposition}\label{prop:unique_rep_meas}
    Suppose $\cA$ is injective and $\set{K}$ is compact.
    If $\vct{y}$ is a moment sequence of two Borel measures with $\supp \mu_1 \subset \set{K}$ and $\supp \mu_2 \subset \set{K}$, then $\mu_1 = \mu_2$.
\end{proposition}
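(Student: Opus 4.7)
The plan is to prove uniqueness via a Stone-Weierstrass density argument followed by the uniqueness clause of the Riesz--Markov representation theorem. First I would verify that $\R[\cA]$, viewed as a space of continuous functions restricted to the compact set $\set{K}$, is a subalgebra of $C(\set{K})$ that contains the constants and separates points. Inclusion of constants follows from $\vct{0}\in\cA$ together with $\e^{\vct{0}}\equiv 1$. Closure under multiplication holds because $\e^{\evec}\cdot\e^{\bevec}=\e^{\evec+\bevec}$, and $\evec+\bevec\in\cA_{\infty}$ whenever $\evec,\bevec\in\cA_{\infty}$, so products of generators remain in $\R[\cA]$. Point separation is exactly where the injectivity hypothesis on $\cA$ is used: if $\vct{x},\vct{x}'\in\set{K}$ satisfy $\e^{\evec}(\vct{x})=\e^{\evec}(\vct{x}')$ for every $\evec\in\cA$, then $\langle\evec,\vct{x}-\vct{x}'\rangle=0$ for all $\evec\in\cA$, and injectivity of the associated linear operator $\cA\colon\R^n\to\R^{\cA}$ forces $\vct{x}=\vct{x}'$.

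Next I would apply the Stone--Weierstrass theorem to conclude that $\R[\cA]$ is uniformly dense in $C(\set{K})$. Since $\mu_1$ and $\mu_2$ are finite (by the paper's definition of Borel measure) and $\set{K}$ is compact, the linear functionals $\Lambda_i(f)\coloneqq\int f\,\diff\mu_i$ are bounded on $C(\set{K})$ with the uniform norm, with $|\Lambda_i(f)|\leq\|f\|_{\infty}\mu_i(\set{K})$. The hypothesis that both measures share the moment sequence $\vct{y}$ gives $\Lambda_1(\e^{\evec})=y_{\evec}=\Lambda_2(\e^{\evec})$ for every $\evec\in\cA_{\infty}$, so $\Lambda_1$ and $\Lambda_2$ agree on the dense subspace $\R[\cA]$, and therefore on all of $C(\set{K})$ by continuity.

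Finally, I would invoke the uniqueness clause of the Riesz--Markov representation theorem for finite regular Borel measures on a compact Hausdorff space to conclude that $\mu_1=\mu_2$; finiteness of each $\mu_i$ together with compactness of $\set{K}\subset\R^n$ ensures the standard regularity needed to apply this result. The main thing to be careful about is the verification that separates this argument from the noncompact case: both the density step (via Stone--Weierstrass) and the passage from $C(\set{K})$-agreement to measure-equality (via Riesz--Markov) rely decisively on compactness of $\set{K}$ and on injectivity of $\cA$, and neither hypothesis can be dropped. Beyond identifying where those assumptions enter, no substantive technical obstacle is anticipated.
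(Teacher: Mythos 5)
Your proposal is correct and follows essentially the same route as the paper's proof: injectivity of $\cA$ gives point separation, Stone--Weierstrass gives uniform density of $\R[\cA]$ in $\Cont(\set{K},\R)$, and the uniqueness clause of Riesz--Markov (together with the fact that Borel measures on a compact metric space are Radon/regular) yields $\mu_1=\mu_2$. The only cosmetic difference is that the paper phrases the density step as uniqueness of the continuous extension of $L_{\vct{y}}$, whereas you compare the two integration functionals directly; the substance is identical.
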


As we work towards proving the propositions above we cite results from the literature that concern \textit{(locally compact) Hausdorff (topological) spaces}.
General background on these spaces can be found in \cite[\S 21.1, 21.2]{royden2010real} and \cite[\S 37]{simmons1963introduction}.
When we say ``$\set{\Omega}$ is a Hausdorff space'' we mean that $\set{\Omega}$ is a set equipped with some Hausdorff topology.
Our original definition of a Borel measure is adapted to this setting by replacing every appearance of ``$\R^n$'' with ``$\set{\Omega}$'' (including when defining $\mathscr{B}$).
As a matter of notation, we use $\struc{\Cont(\mathsf{\Omega}, \R)}$ denote the ring of all continuous functions $f\colon \mathsf{\Omega} \to \R$.

\begin{theorem}[Theorem 3.1 \cite{marshall2003}]\label{thm:Haviland-help}
    Let $A$ be an $\R$-algebra, $\mathsf{\Omega}$ a Hausdorff space, and
    $\hat \;\colon A \to \Cont(\mathsf{\Omega},\R)$ an $\R$-algebra homomorphism. 
    Assume there exists a $p\in A$ such that $\hat{p}\geq 0$ on $\mathsf{\Omega}$ and, for each integer $\ell\geq 1$, the sublevel set $\mathsf{\Omega}_{\ell}=\{\vct{x}\in \mathsf{\Omega}: \hat{p}(\vct{x})\leq \ell\}$ is compact. 
    Then, for any linear functional $L \colon A \to \R$ satisfying $L(\{a \in A: \hat{a}\geq 0 \text{ on } \mathsf{\Omega}\})\subset \R_+$, there exists a Borel measure $\mu$ on $\mathsf{\Omega}$ such that $L(a) =\int_\mathsf{\Omega} \hat{a} \diff \mu$ for all $a\in A$.
\end{theorem}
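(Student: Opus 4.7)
The plan is to construct $\mu$ by extending $L$ to a positive linear functional on $\Cont_c(\set{\Omega}, \R)$, invoking the classical Riesz representation theorem, and then promoting the resulting integral identity from compactly-supported continuous functions to all of $A$ via a truncation argument controlled by $p$.

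First, I would reduce to the case that $\hat{\cdot}$ is injective. If $\hat{a} = 0$, then both $\hat{a} \geq 0$ and $\widehat{-a} \geq 0$ hold, forcing $L(a) = 0$, so $L$ descends to the image $\hat{A} \subset \Cont(\set{\Omega}, \R)$; I identify $A$ with this image. The sublevel-set hypothesis makes $\set{\Omega}$ locally compact Hausdorff, since every point lies in the open set $\{\hat{p} < \ell\}$ whose closure sits inside the compact $\set{\Omega}_\ell$, so the Riesz theorem is available.

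Next, I would define a sublinear majorant $p_L \colon \Cont_c(\set{\Omega}, \R) \to \R$ by $p_L(f) = \inf\{\,L(a) : a \in A,\ \hat{a} \geq f\,\}$. This infimum is finite: any $f \in \Cont_c$ is bounded by some constant $c$, and $c$ times the unit of $A$ dominates $f$. Sublinearity is immediate from linearity and positivity of $L$. Hahn--Banach extends $L$ (restricted to those $a \in A$ with $\hat{a} \in \Cont_c$) to a linear functional $\tilde L$ on all of $\Cont_c(\set{\Omega}, \R)$ with $\tilde L \leq p_L$. Since $f \leq 0$ gives $p_L(f) \leq L(0) = 0$, the extension $\tilde L$ is positive, and the Riesz representation theorem produces a Borel measure $\mu$ on $\set{\Omega}$ with $\tilde L(f) = \int f \diff \mu$ for every $f \in \Cont_c(\set{\Omega}, \R)$.

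It remains to show $L(a) = \int \hat{a} \diff \mu$ for every $a \in A$. Using Urysohn's lemma on the locally compact Hausdorff space $\set{\Omega}$ I build cutoffs $\chi_\ell \in \Cont_c(\set{\Omega}, [0,1])$ with $\chi_\ell \equiv 1$ on $\set{\Omega}_\ell$ and $\chi_\ell \equiv 0$ off $\set{\Omega}_{\ell+1}$. For any $a \in A$, the function $\hat{a}\chi_\ell$ lies in $\Cont_c$ and tends pointwise to $\hat{a}$. Integrability of $\hat{a}$ under $\mu$ is obtained by testing against powers of $p$: since each $(1_A + p)^k$ belongs to $A$, monotone convergence applied to the compactly-supported $\chi_\ell (1 + \hat{p})^k$ bounds $\int (1 + \hat{p})^k \diff \mu$ by $L((1_A + p)^k) < \infty$, and $|\hat{a}|$ is controlled by a polynomial in $1 + \hat{p}$ on each sublevel set, yielding integrability and $\int \hat{a}\chi_\ell \diff \mu \to \int \hat{a} \diff \mu$ by dominated convergence.

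The main obstacle is connecting $\int \hat{a}\chi_\ell \diff \mu$ back to $L(a)$: the Hahn--Banach extension $\tilde L$ only agrees with $L$ on the intersection $\hat{A} \cap \Cont_c(\set{\Omega}, \R)$, which may be trivial, so the cutoffs $\hat{a}\chi_\ell$ need not lie in $\hat{A}$. The technical heart of the argument is to squeeze $a$ between elements $b_\ell^{\pm} \in A$ whose images approximate $\hat{a}\chi_\ell$ from above and below, with tail error controlled by factors of the form $(\hat{p}/\ell)^k(1 - \chi_\ell)$, so that positivity of $L$ yields $L(b_\ell^-) \leq \tilde L(\hat{a}\chi_\ell) \leq L(b_\ell^+)$ with $L(b_\ell^{\pm}) \to L(a)$. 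Producing these bounding elements inside the purely abstract $\R$-algebra $A$, using only the growth information encoded by $p$, is where the real work of the proof lies.
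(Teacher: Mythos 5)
First, note that the paper does not prove this statement at all: it is imported verbatim as Theorem~3.1 of Marshall's 2003 paper and used as a black box in the proof of Proposition~\ref{prop:sigRHT}. So the only question is whether your blind attempt stands on its own. Its architecture is in fact the standard one (extend $L$ positively, apply Riesz--Markov, then pass from $\Cont_c$ to all of $A$ by truncation against the sublevel sets of $\hat{p}$), and your local-compactness observation and the sublinear majorant $p_L$ are fine. But the proof is not complete: you explicitly defer the decisive step --- showing $L(a) = \int \hat{a}\diff\mu$ --- to unnamed ``bounding elements $b_\ell^{\pm}$'' and concede that producing them ``is where the real work of the proof lies.'' That step is the theorem; without it you have only constructed \emph{some} measure on $\set{\Omega}$ with no verified relation to $L$ on $A$. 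The missing ingredients are concrete. From $\tilde{L} \leq p_L$ applied to $\pm f$ you get the sandwich $\sup\{L(b) : \hat{b} \leq f\} \leq \int f \diff\mu \leq \inf\{L(a) : \hat{a} \geq f\}$ for $f \in \Cont_c(\set{\Omega},\R)$ (equivalently, extend $L$ to the ordered space $\hat{A} + \Cont_c(\set{\Omega},\R)$ via the M.~Riesz extension theorem, which keeps the extension equal to $L$ on all of $\hat{A}$ and dissolves your worry that $\hat{A} \cap \Cont_c$ may be trivial). Then reduce to $\hat{a} \geq 0$ via the algebraic identity $a = \tfrac{1}{2}(1_A + a)^2 - \tfrac{1}{2}(1_A + a^2)$, both summands having nonnegative image. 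For $\hat{a} \geq 0$: monotone convergence and $\hat{a}\chi_\ell \leq \hat{a}$ give $\int \hat{a}\diff\mu \leq L(a)$, while the explicit element $a - a p^k/\ell^k \in A$ satisfies $\widehat{a - ap^k/\ell^k} \leq \hat{a}\chi_\ell$ (it is $\leq \hat{a}$ on $\set{\Omega}_\ell$ and $\leq 0$ off it, since $\hat{p} > \ell$ there), whence $L(a) - L(ap^k)/\ell^k \leq \int \hat{a}\chi_\ell\diff\mu$ and letting $\ell \to \infty$ gives the reverse inequality. These two or three lines are exactly the content you labeled ``the real work,'' so as written the proposal is a proof sketch with its core left open.

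Separately, your integrability argument is incorrect as stated: you claim $|\hat{a}|$ ``is controlled by a polynomial in $1 + \hat{p}$ on each sublevel set'' and invoke dominated convergence. On each (compact) sublevel set every continuous function is bounded, so that clause proves nothing; and globally there is no reason an arbitrary $a \in A$ should satisfy $|\hat{a}| \leq C(1+\hat{p})^k$ --- compactness of $\{\hat{p} \leq \ell\}$ constrains $\hat{p}$, not the growth of other elements (take $\set{\Omega} = \R$, $p$ mapping to $x^2$, and an element mapping to $e^x$). The correct dominating function is $\widehat{(1_A + a^2)}/2 \geq |\hat{a}|$, whose $\mu$-integrability follows from the monotone-convergence half of the argument above applied to the element $(1_A + a^2)/2$, which has nonnegative image. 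With that replacement, and with the bounding elements supplied explicitly, your outline closes up into Marshall's proof.
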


\begin{proof}[Proof of Proposition \ref{prop:sigRHT}]
    The claim follows from Theorem \ref{thm:Haviland-help} by simple identifications.
    First, $\set{\Omega} := \set{K}$ is a Hausdorff space when $\set{K}$ is considered in the relative topology induced by the standard topology on $\R^n$. (I.e., where one defines the open subsets of $\set{K}$ to be all sets of the form $\set{K}\cap \set{U}$ for open $\set{U}\subset\R^n$.)
    The remaining identifications are
    $A=\R[\cA]$ and $\hat \;\colon \R[\cA] \to \Cont(\set{K},\R)$ defined by $\hat{f}(\vct{x})=f(\vct{x})$ for all $\vct{x}\in \set{K}$ (i.e., $\hat{f}$ is the restriction of the signomial $f$ to $\set{K}$).
    
    We make use of the distinguished signomial $p = \sum_{\evec \in \cA} \e^{\evec}$.
    It suffices to show that for any $\ell$ the sublevel set $\mathsf{\Omega}_{\ell} = \{ \vct{x} \in \set{K} \,:\, \hat{p}(\vct{x}) \leq \ell \}$ is compact.
    It is obvious that $\mathsf{\Omega}_{\ell}$ is closed.
    Moreover, in order for a point $\vct{x}$ to belong to $\mathsf{\Omega}_{\ell}$ it is necessary that $\langle\evec, \vct{x}\rangle \leq \log \ell$ for all $\evec \in \cA$.
    By the theorem's assumption, any unbounded sequence in $\set{K}$ cannot satisfy this property.
    Therefore all sequences $(\vct{x}_t)_{t \in \N} \subset \mathsf{\Omega}_{\ell}$ are bounded, which implies compactness of $\mathsf{\Omega}_{\ell}$.
\end{proof}

Our proof of Proposition \ref{prop:unique_rep_meas} requires intermediate use of \textit{Radon measures}.
These are the Borel measures which satisfy the property that for every $\set{B} \in \mathscr{B}$ and $\epsilon > 0$, there are sets $\set{A},\set{C} \in \mathscr{B}$ where (i) $\set{A}\subset\set{B}$ is compact, (ii) $\set{C} \supset \set{B}$ is open, and (iii) $\mu(\set{C}) + \epsilon \geq \mu(\set{B}) \geq \mu(\set{A}) - \epsilon$.
In our context, Radon measures are important because of their role in the following theorem (which uses $\Cont_c(\set{\Omega},\R)$ for the ring of compactly supported continuous functions from $\set{\Omega}$ to $\R$).

\begin{theorem}[Riesz-Markov, see \S 21.4 of \cite{royden2010real}]\label{thm:RMT}
    Let $\set{\Omega}$ be a locally compact Hausdorff space and $L$ be a linear functional on $\Cont_c(\set{\Omega},\R)$.
    If $L(f) \geq 0$ for every nonnegative $f \in \Cont_c(\set{\Omega},\R)$,
    then there is a unique Radon measure $\mu$ over $\set{\Omega}$ for which $L(f) = \int_{\set{\Omega}} f \diff\mu$ for every $f \in \Cont_c(\set{\Omega},\R)$.
\end{theorem}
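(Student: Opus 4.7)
The plan is to construct the measure $\mu$ directly from the linear functional $L$ via a Carath\'eodory-style outer measure, verify its Radon properties, and then separately establish the integral representation and uniqueness. First I would define a set function on open sets by
\[
    \mu^*(\set{U}) \coloneqq \sup\{\, L(f) \,:\, f \in \Cont_c(\set{\Omega},\R),\ 0 \leq f \leq \mathbf{1},\ \supp f \subset \set{U}\,\},
\]
and then extend to arbitrary subsets of $\set{\Omega}$ by $\mu^*(\set{E}) \coloneqq \inf\{\mu^*(\set{U}) : \set{U} \supset \set{E},\ \set{U}\text{ open}\}$. Positivity of $L$ on the nonnegative cone makes $\mu^*$ monotone and nonnegative; countable subadditivity for open covers follows from a partition-of-unity argument, which combined with the outer definition yields countable subadditivity in general. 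Thus $\mu^*$ is a genuine outer measure on $\set{\Omega}$.

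Next I would apply Carath\'eodory's theorem to obtain the $\sigma$-algebra $\mathscr{M}$ of $\mu^*$-measurable sets and restrict $\mu^*$ there to get a measure $\mu$. The key step is to verify that every open set (and hence every Borel set) lies in $\mathscr{M}$. This is the standard argument that leverages Urysohn's lemma on the locally compact Hausdorff space $\set{\Omega}$ to produce compactly supported continuous ``separators'' between a compact subset of an open set and the complement, splitting $L$-values additively up to $\epsilon$. Finite-on-compacts follows from local compactness (dominate the indicator of a compact set by a cutoff); outer regularity is built in by construction; and inner regularity on open sets is immediate from the definition of $\mu^*$, so $\mu$ is Radon.

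The heart of the proof and the step I expect to be the main obstacle is the integral identity $L(f) = \int f\diff\mu$ for every $f \in \Cont_c(\set{\Omega},\R)$. By linearity it suffices to treat $f \geq 0$ with $\set{K}\coloneqq\supp f$ compact. I would slice $f$ through its level sets: fix $N$, write $\set{U}_k = \{f > k/N\}$ for $k=0,\dots,N\|f\|_\infty$, and use Urysohn to build $g_k \in \Cont_c(\set{\Omega},\R)$ with $\mathbf{1}_{\overline{\set{U}}_{k+1}} \leq g_k \leq \mathbf{1}_{\set{U}_k}$. A telescoping sum $f \approx \frac{1}{N}\sum_k g_k$ plus the definition of $\mu^*$ on $\set{U}_k$ yields both $L(f)\leq \int f\diff\mu + O(1/N)$ and $L(f)\geq \int f\diff\mu - O(1/N)$; letting $N\to\infty$ closes the estimate. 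The bookkeeping that links sup-type values of $\mu^*$ on open sets to $L$ evaluated on these level-set cutoffs is delicate because one must simultaneously control the supports of the $g_k$ and the level-set inclusions, and this is where the majority of the technical effort goes.

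Finally, uniqueness among Radon measures: if $\mu_1$ and $\mu_2$ both represent $L$, then for any compact $\set{K}$ and open $\set{U}\supset \set{K}$, Urysohn provides $f\in\Cont_c(\set{\Omega},\R)$ with $\mathbf{1}_{\set{K}}\leq f\leq \mathbf{1}_{\set{U}}$, giving $\mu_i(\set{K})\leq L(f)\leq \mu_i(\set{U})$. Combining this with inner regularity on open sets and outer regularity on Borel sets (both guaranteed by the Radon hypothesis) forces $\mu_1(\set{B}) = \mu_2(\set{B})$ for every Borel set $\set{B}$, completing the proof.
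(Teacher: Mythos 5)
This statement is not proven in the paper: it is the classical Riesz--Markov--Kakutani representation theorem, cited as a black box from \S 21.4 of Royden--Fitzpatrick and used only inside the proof of Proposition \ref{prop:unique_rep_meas} (where $\set{\Omega}=\set{K}$ is a compact metric space). Your outline reconstructs the standard textbook proof of that cited result --- the Carath\'eodory outer measure $\mu^*$ generated by $L$ on open sets, Urysohn-based verification that Borel sets are measurable and that $\mu$ is Radon, the level-set/telescoping argument for $L(f)=\int f\diff\mu$, and uniqueness from inner and outer regularity --- and the sketch is sound; the one detail worth making explicit in the slicing step is that $\overline{\set{U}}_{k+1}\subset\{f\geq (k+1)/N\}\subset\set{U}_k$ with the middle set compact, which is exactly what licenses the Urysohn separators $g_k$. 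So there is no gap to report against the paper, only the observation that you have supplied a proof where the paper supplies a citation.
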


\begin{proof}[Proof of Proposition \ref{prop:unique_rep_meas}]
    We can take $\set{\Omega} = \set{K}$ by giving $\set{K}$ the relative topology induced by the standard topology on $\R^n$.
    Note that $\set{K}$ is actually a compact metric space upon adopting the Euclidean metric, and $\Cont_c(\set{K},\R) = \Cont(\set{K},\R)$.
    Therefore if we can show that $L_{\vct{y}}\colon \R[\cA] \to \R$ has a unique continuous extension $\overline{L}_{\vct{y}}\colon \Cont(\set{K},\R) \to \R$, then the claim will follow for Radon measures by applying Theorem \ref{thm:RMT}.
    
    The uniqueness of such an extension can be stated as follows: for every $\phi \in \Cont(\set{K},\R)$, we have
    \begin{equation}\label{eq:uniqe_rep_meas:need_RRT}
        \sup_{f \in \R[\cA]}\{ L_{\vct{y}}(f) \,:\, \phi - f \geq 0 \text{ on } \set{K} \} = \inf_{f \in \R[\cA]}\{ L_{\vct{y}}(f) \,:\, f - \phi \geq 0 \text{ on } \set{K} \}.
    \end{equation}
    It is easily shown that \eqref{eq:uniqe_rep_meas:need_RRT} holds if every function in $\Cont(\set{K},\R)$ can be approximated to arbitrary precision (in sup norm) by a signomial in $\R[\cA]$.
    The Stone-Weierstrass Theorem tells us that such an approximation exists if signomials in $\R[\cA]$ can separate points, i.e., if for every pair of distinct $\vct{x},\vct{x'}\in \set{K}$, there exists an $f \in \R[\cA]$ for which $f(\vct{x}) \neq f(\vct{x'})$.
    
    We now show that signomials in $\R[\cA]$ can separate points.
    Let $\vct{x}$ and $\vct{x'}$ be distinct points in $\R^n$.
    By the injectivity of $\cA$, the images $\vct{z} \coloneqq \cA \vct{x}$ and $\vct{z'} \coloneqq \cA \vct{x'}$ are likewise distinct in $\R^{\cA}$.
    Recall that these vectors have components $z_{\evec} = \langle \evec,\vct{x}\rangle$ and $z_{\evec}' = \langle \evec,\vct{x'}\rangle$, so the condition that $\vct{z} \neq \vct{z'}$ means there exists a $\bevec \in \cA$ for which $\langle\bevec,\vct{x}\rangle \neq \langle \bevec,\vct{x'}\rangle$.
    We exponentiate both sides of that non-equality to find
    \[
        \e^{\bevec}(\vct{x}) = \exp\langle\bevec,\vct{x}\rangle \neq \exp\langle\bevec,\vct{x'}\rangle = \e^{\bevec}(\vct{x'}),
    \]
    which that tells us that $\e^{\bevec} \in \R[\cA]$ separates $\vct{x},\vct{x'}$.
    
    We complete the proof by noting that since $\set{K}$ is a compact metric space, every Borel measure on $\set{K}$ is also a Radon measure \cite[\S 21.5, Theorem 14]{royden2010real}.
\end{proof}

\subsection{Proof of Theorem \ref{thm:outer_rep}}\label{subsec:proof_of_outer_rep}

    For this proof we denote the cone of $\set{K}$-nonnegative signomials of $\cA$-degree at most $d$ by $P_{d}$.
    Properties (i) and (ii) of the $(\cA,\set{K})$-complete sequence $\mathscr{C}= (C_d)_{d\geq 1}$ tell us that $C_d \subset P_{d}$.
    
    Suppose $f$ is $\set{K}$-nonnegative.
    We will show that $V_{d}(f \vct{y}) \in C_d^\dagger$ holds for all $d$.
    As a first step, define $\vct{\hat{y}} \in \R^{\cA_{\infty}}$ by $\hat{y}_{\evec} = \int \e^{\evec}(\vct{x})f(\vct{x})\diff\mu(\vct{x})$ for all $\evec\in\cA_{\infty}$.
    Because $f$ is $\set{K}$-nonnegative, the differential quantity $\diff\phi(\vct{x}) = f(\vct{x})\diff\mu(\vct{x})$ defines a Borel measure on $\set{K}$, so $\vct{\hat{y}}$ is a moment sequence.
    Meanwhile, the simple identity $L_{\vct{y}}(f \e^{\evec}) = \hat{y}_{\evec}$ tells us that $V_d(f \vct{y}) = V_d(\vct{\hat{y}})$ for all $d$.
    Combine these to see that $V_d(f\vct{y}) \in P_{d}^\dagger$ for all $d$.
    The result follows since $P_{d}^\dagger \subset C_d^\dagger$.
     
    Now we address the theorem's other claim:
    we show that if $V_d(f \vct{y}) \in C_d^\dagger$ for all $d$, then $f$ is nonnegative on $\set{K}$.
    
    Once again, we define $\vct{\hat{y}} \in \R^{\cA_{\infty}}$ by $\hat{y}_{\evec} = \int \e^{\evec}(\vct{x})f(\vct{x})\diff\mu(\vct{x})$ so that $V_d(\vct{\hat{y}}) = V_d(f\vct{y})$.
    Let $d$ be any fixed positive integer.
    We claim that $L_{\vct{\hat{y}}}(g) \geq 0$ for all $g \in P_{d}$;
    by a continuity argument this claim holds if $L_{\vct{\hat{y}}}(g) \geq 0$ for all $\set{K}$-positive $g$ with $\deg_{\cA}(g) \leq d$.
    Let us fix such a $g$.
    By property (iii) of $\mathscr{C}$, there exists an integer $d' \geq \deg_{\cA}(g)$ for which $g \in C_{d'}$.
    Now, our assumption on $\vct{\hat{y}}$ includes $V_{d'}(\vct{\hat{y}}) \in C_{d'}^\dagger$, which tells us $L_{\vct{\hat{y}}}(g) \geq 0$.
    Therefore $L_{\vct{\hat{y}}}(g) \geq 0$ for every $g \in P_{d}$ for our \textit{arbitrary} fixed $d$.
    We can now invoke Proposition \ref{prop:sigRHT} to see that there is some Borel measure $\psi$ with $\supp \psi \subset \set{K}$ and $\vct{\hat{y}}$ as its moment sequence.
    Since $\set{K}$ is compact and $\cA$ is injective, Proposition \ref{prop:unique_rep_meas} tells us that $\psi$ is unique.
    
    By now we have shown that there is a unique Borel measure $\psi$ for which
    \begin{equation}\label{eq:outer_rep:moment_match}
            \int \e^{\evec}(\vct{x})f(\vct{x})\diff\mu(\vct{x}) = \int\e^{\evec}(\vct{x})\diff\psi(\vct{x}) \quad \text{ for all }\quad \evec \quad\text{ in }\quad\cA_{\infty}.
    \end{equation}
    Getting from \eqref{eq:outer_rep:moment_match} to ``$f \geq 0$ on $\set{K}$'' requires two steps.
    The main step is to carefully use moment determinacy (Proposition \ref{prop:unique_rep_meas}) to show that $f \diff \mu$ induces a Borel measure on $\set{K}$.
    The second step is to invoke \cite[Lemma 3.1]{Lasserre2011}, which tells us that $f$ (as a continuous function on a separable metric space) is nonnegative on $\set{K} = \supp \mu$ if and only if the set function $\set{B} \mapsto \int_{\set{K}\cap\set{B}} f \diff\mu$ is a positive measure (e.g., a Borel measure).
    
    So we turn to showing that $f \diff \mu$ induces a Borel measure.
    Begin by introducing $\set{B}_1 = \{ \vct{x} \in \set{K}\,:\, f(\vct{x}) \geq 0\}$ and $\set{B}_2 = \{ \vct{x} \in \set{K} \,:\, f(\vct{x}) < 0 \}$.
    We want to show that $\set{B}_2$ is empty but we have no tools to do this directly.
    Instead, we use $\set{B}_1, \set{B}_2$ to define the functions
    \[
        \phi_1(\set{B}) = \int_{\set{B} \cap \set{B}_1} f(\vct{x})\diff\mu(\vct{x}) \quad\text{ and }\quad \phi_2(\set{B}) = \int_{\set{B} \cap \set{B}_2} (-f(\vct{x}))\diff\mu(\vct{x}). 
    \]
    These functions are Borel measures since $f$ is continuous and $\set{K}$ is compact.
    We can therefore define what is known as a \textit{signed measure} (see \cite[\S 17.2]{royden2010real}) $\phi = \phi_1 - \phi_2$ and note that $\int\e^{\evec}(\vct{x})\diff\phi(\vct{x}) = \int\e^{\evec}(\vct{x})\diff\psi(\vct{x})$ for all $\evec\in \cA_{\infty}$ -- equations that can be rewritten as 
    \[
        \int \e^{\evec}(\vct{x})\diff\phi_1(\vct{x}) = \int\e^{\evec}(\vct{x})\diff(\psi + \phi_2)(\vct{x}) \quad \text{ for all } \evec \quad\text{ in }\quad\cA_{\infty}.
    \]
    The key is that now, $\phi_1$ and $\psi + \phi_2$ are Borel measures, therefore the fact that their moments match lets us use Proposition \ref{prop:unique_rep_meas} to conclude that they are unique, i.e., $\phi_1 = \psi + \phi_2$.
    From here we simply rewrite $\phi = \phi_1 - \phi_2 = \psi$ to see that since $\psi$ is a Borel measure, so is $\phi$.
    The result follows from \cite[Lemma 3.1]{Lasserre2011} since $\diff\phi = f\diff\mu$.

\section{Complete hierarchies of upper bounds}\label{sec:upper_bounds}

The following result states that essentially any SAGE-based \positiv{} can be converted into a complete hierarchy of upper bounds for signomial minimization.

\begin{theorem}\label{thm:generalUB}
    Let $\cA$ be injective, $\set{K}$ be compact, and $\mu$ be a Borel measure with support $\set{K}$.
    Consider an $(\cA,\set{K})$-complete sequence $\mathscr{C} = (C_d)_{d \geq 1}$ where $C_d \subset C_{d+1}$ and each $C_d$ contains all posynomials in $\R[\cA]_{d}$.
    For $f \in \R[\cA]$ and integers $d \geq 1$, define
    \begin{equation}\label{eq:abstract_upper_def}
        \theta_{d} =  \inf_\psi\left\{ \int f \psi\diff\mu \,:\, \int \psi\diff \mu= 1,\, \psi \in C_d \right\}.
    \end{equation}
    The sequence $(\theta_{d})_{d \geq 1}$ monotonically converges to $f_{\set{K}}^\star \coloneqq \min\{ f(\vct{x})\,:\, \vct{x}\in\set{K}\}$ from above.
\end{theorem}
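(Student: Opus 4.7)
The plan is to split the claim into three pieces: feasibility and monotonicity of $(\theta_d)$, the lower bound $\theta_d \geq f_{\set{K}}^\star$, and the upper bound via convergence $\limsup_d \theta_d \leq f_{\set{K}}^\star$. The first two are quick. Feasibility holds because the constant posynomial $\psi \equiv 1/\mu(\set{K})$ lies in $\R[\cA]_d$ for every $d \geq 1$ (since $\vct{0} \in \cA$) and hence in $C_d$ by the assumption that each $C_d$ contains all posynomials in $\R[\cA]_d$; this $\psi$ also satisfies $\int \psi\diff\mu = 1$. Monotonicity $\theta_{d+1} \leq \theta_d$ is immediate from the nesting $C_d \subset C_{d+1}$. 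For the lower bound, property (ii) of an $(\cA,\set{K})$-complete sequence says every $\psi \in C_d$ is $\set{K}$-nonnegative, so any feasible $\psi$ makes $\psi\diff\mu$ a probability measure supported in $\set{K}$, yielding $\int f\psi\diff\mu \geq f_{\set{K}}^\star$.

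The main task is convergence. Fix $\epsilon > 0$ and a minimizer $\vct{x}^*$ of $f$ on $\set{K}$, and I will exhibit, for all sufficiently large $d$, a feasible $\psi \in C_d$ with $\int f\psi\diff\mu \leq f_{\set{K}}^\star + \epsilon$. The construction proceeds in two stages. First, a concentration step: I consider the strictly positive continuous kernel $\eta_k(\vct{x}) = \exp(-k\|\vct{x}-\vct{x}^*\|_2^2)$. Because $\supp\mu = \set{K}$ assigns positive mass to every neighborhood of $\vct{x}^*$, the normalized densities $\eta_k/\int\eta_k\diff\mu$ concentrate around $\vct{x}^*$, giving
\[
\lim_{k\to\infty}\frac{\int f\,\eta_k\diff\mu}{\int \eta_k\diff\mu} \,=\, f(\vct{x}^*) \,=\, f_{\set{K}}^\star,
\]
so I can fix $k$ with this ratio at most $f_{\set{K}}^\star + \epsilon/2$. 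Second, an approximation step: since $\cA$ is injective and contains $\vct{0}$, the signomial ring $\R[\cA]$ is a point-separating subalgebra of $\Cont(\set{K},\R)$ containing constants (the separation argument is exactly that of the proof of Proposition~\ref{prop:unique_rep_meas}), so Stone-Weierstrass furnishes $\tilde\eta_k \in \R[\cA]$ arbitrarily close to $\eta_k$ in sup-norm on $\set{K}$. Because $\eta_k$ is bounded below by a positive constant on the compact $\set{K}$, a small enough approximation error forces $\tilde\eta_k$ to be $\set{K}$-positive, whereupon property (iii) of $(\cA,\set{K})$-completeness yields some $d$ with $\tilde\eta_k \in C_d$. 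Rescaling to $\psi = \tilde\eta_k/\int\tilde\eta_k\diff\mu$ keeps $\psi$ in $C_d$ (convex cones are closed under positive scaling), makes $\psi$ feasible, and forces $\int f\psi\diff\mu$ to differ from the ratio $\int f\eta_k\diff\mu/\int\eta_k\diff\mu$ by at most $\epsilon/2$ via a routine continuity estimate using $\|f\|_\infty < \infty$ on the compact $\set{K}$. Summing the two halves gives $\theta_d \leq f_{\set{K}}^\star + \epsilon$, and sending $\epsilon \to 0$ finishes the proof.

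The main obstacle I anticipate is a simultaneous calibration in the final step: the sup-norm tolerance for $\tilde\eta_k - \eta_k$ must be small enough both to preserve strict $\set{K}$-positivity (so that the Positivstellensatz-flavored completeness applies) and to control the normalized ratio $\int f\psi\diff\mu$. These are compatible because both requirements reduce to making the tolerance smaller than an explicit constant depending on $k$, $f$, $\mu(\set{K})$, and $\min_{\set{K}}\eta_k$, but the bookkeeping needs attention. A secondary technical item is the weak-concentration limit for $\eta_k/\int\eta_k\diff\mu$; this is standard given $\supp\mu = \set{K}$ and follows from comparing Gaussian tails with the positive $\mu$-mass of any ball around $\vct{x}^*$.
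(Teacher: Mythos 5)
Your proof is correct, but it takes a genuinely different route from the paper's. The paper proves convergence by duality and outer approximation: it first establishes strong duality between \eqref{eq:abstract_upper_def} and the dual program $\sup\{\theta : V_d(f\vct{y}) - \theta V_d(\vct{y}) \in C_d^\dagger\}$ (Proposition \ref{prop:strongdual}), observes that the limit $\theta^\star$ of the decreasing sequence satisfies $V_d((f-\theta^\star)\vct{y}) \in C_d^\dagger$ for every $d$, and then invokes Theorem \ref{thm:outer_rep} to conclude $f - \theta^\star \geq 0$ on $\set{K}$, whence $\theta^\star \leq f_{\set{K}}^\star$. That argument leans on the full moment-theoretic machinery of Section \ref{sec:moment} (the Riesz--Haviland-type Proposition \ref{prop:sigRHT}, determinacy via Proposition \ref{prop:unique_rep_meas}, and Lasserre's lemma). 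You instead work entirely on the primal side: you build an approximate Dirac sequence $\eta_k$ at a minimizer, approximate it in sup-norm by a $\set{K}$-positive signomial via Stone--Weierstrass (using injectivity of $\cA$ exactly as in the proof of Proposition \ref{prop:unique_rep_meas}), and feed that signomial into property (iii) of $(\cA,\set{K})$-completeness to obtain a feasible $\psi \in C_d$ with objective value within $\epsilon$ of $f_{\set{K}}^\star$. Your argument is more elementary and self-contained --- it bypasses Theorem \ref{thm:outer_rep} and signomial moment determinacy entirely, using only the support property of $\mu$ and density of $\R[\cA]$ in $\Cont(\set{K},\R)$ --- and it is constructive in a way that could in principle be made quantitative (tracking $k$, the sup-norm tolerance, and the degree $d$ produced by the Positivstellensatz would give convergence rates, in the spirit of the de Klerk--Laurent line of work the paper cites). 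What the paper's route buys in exchange is that the convergence of the upper bounds is exhibited as a direct consequence of the outer-approximation theorem, tying Sections \ref{sec:moment} and \ref{sec:upper_bounds} together and passing through the dual formulation \eqref{eq:sage:rep1} that is actually used computationally. The only bookkeeping you flag --- calibrating the sup-norm tolerance simultaneously against $\min_{\set{K}}\eta_k$ and against the ratio estimate --- is indeed routine, since both constraints are of the form ``tolerance smaller than an explicit positive constant,'' so your proof goes through as written.
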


One calls $\mu$ in Theorem \ref{thm:generalUB} a \textit{reference measure}.
The validity of the hierarchy follows from the facts that (1) $\psi$ and $\mu$ induce a probability measure $\diff\phi = \psi\diff\mu$ on $\set{K}$, and (2) the objective function in \eqref{eq:abstract_upper_def} is simply the average of $f$ according to $\phi$.
The hierarchy's convergence property qualitatively states that given any reference measure $\mu$ with compact support $\set{K}$, Dirac distributions centered on a signomial's minimizer(s) over $\set{K}$ can be approximated to arbitrary accuracy by distributions of the form ``$\psi\diff\mu$'' for signomials $\psi$ that are nonnegative on $\set{K}$.

The original idea for this approach comes from Lasserre's \cite{Lasserre2011}.
We were made aware of this idea in a broader sense through a presentation by de Klerk at the 2019 ICCOPT meeting in Berlin, Germany (see \cite{dKLLS2017,dKLS2017-convergence}).
Lasserre's approach maps to Theorem \ref{thm:generalUB} in the sense that $\R[\cA]_d$ is replaced by the polynomial ring in $n$ variables and $C_d$ is the cone of polynomials in $n$ variables of degree $2d$ that admit SOS decompositions.
Note in particular that the cones $C_d$ in \cite{Lasserre2011} consist of only a very specific class of globally nonnegative polynomials.
The approach in \cite{dKLLS2017} is closer to our method.
Besides working in the polynomial ring, it takes for $C_d$ the cone of polynomials of degree $d$ that admit a certificate of nonnegativity over $[0,1]^n$ via the Handelman hierarchy; see \cite{cassier1984probleme,handelman1988representing}.

This article does not report on numerical experiments with a hierarchy of the form in Theorem \ref{thm:generalUB}.
Rather, we explain the basic considerations needed to implement such a hierarchy (Section~\ref{subsec:principles_upper_bounds}) and detail the construction of two such hierarchies (Section~\ref{subsec:example_upper_hierarchies}), before ultimately proving the theorem (Section~\ref{subsec:proof_lower_hier}).

\subsection{General principles in computing the upper bounds}\label{subsec:principles_upper_bounds}

Our goal here is to express problem \eqref{eq:abstract_upper_def} using suitable numeric problem data.
We begin by using $\vct{y} \in \R^{\cA_{\infty}}$ to denote the moment sequence of $\mu$.
Next, let $\vct{\psi} = (\psi_{\evec}\,:\,\evec\in\cA_{d})$ be the vector of coefficients for a signomial $\psi = \sum_{\evec\in\cA_{d}}\psi_{\evec}\e^{\evec} \in C_d$.
The objective function of \eqref{eq:abstract_upper_def} is
\[
     \int f \psi\diff\mu = \sum_{\evec\in\cA_{d}}\psi_{\evec }\int f \e^{\evec}\diff\mu = \sum_{\evec\in\cA_d} \psi_{\evec}L_{\vct{y}}(f\e^{\evec}) = \langle\vct{\psi},V_{d}(f \vct{y})\rangle.
\]
We expanded $\psi$ as a sum and used linearity of integration to obtain the first equality above.
The second equality used the fact that Riesz functionals of moment sequences simply perform integration (i.e., $L_{\vct{y}}(h) = \int h\diff\mu$).
Finally, the third equality applied the definition of signomial moment localizers.
One may similarly verify that the constraint $\int\psi\diff\mu = 1$ can be written as $\langle\vct{\psi},V_{d}(\vct{y}) \rangle = 1$.

We take this space to record the following representation for \eqref{eq:abstract_upper_def}
\begin{equation}\label{eq:upperBoundHierDef}
    \inf\left\{ \,\langle V_{d}(f\vct{y}), \vct{\psi} \rangle \,:\, \langle V_{d}(\vct{y}), \vct{\psi} \rangle = 1,~ \psi \in C_d \right\}
\end{equation}
and a representation of its conic dual
\begin{equation}\label{eq:sage:rep1}
    \sup\left\{ \theta \,:\, V_{d}(f\vct{y}) - \theta V_{d}(\vct{y}) \in C_d^\dagger,~ \theta \in \R \right\}.
\end{equation}
Strong duality holds for this pair of problems under the (essentially universal) condition that every posynomial in $\R[\cA]_{d}$ is also in $C_d$; see Proposition \ref{prop:strongdual}.

In order to have a hope of solving these problems, we must be able to compute the localizers $V_{d}(f\vct{y}), V_{d}(\vct{y}) \in \R^{\cA_d}$.
Doing this is equivalent to evaluating appropriate signomial moments $y_{\evec} =\int \e^{\evec}\diff\mu$.
In particular, $V_d(f\vct{y})$ is a linear function of moments with $\cA$-degree up to $d + \deg_{\cA}(f)$.

There are only a few interesting cases where these moments can be derived either in closed form or numerically.
An especially prominent case is when $\mu$ is the uniform measure on a box.
Other examples with closed-form expressions for signomial moments include uniform measures over ellipsoids  \cite[Theorem 3.2]{LZ2001} and solid simplices \cite[Theorem 2.6]{LZ2001}.
If $\mu$ is the uniform measure on a polytope then one can nominally compute moments by triangulating that polytope with simplices (see \cite{Barvinok:Volume}).

Once we have the necessary localizers, computing the upper bounds $\theta_{d}$ requires a method to optimize a linear function over an affine slice of $C_d$ or to check membership in $C_d^\dagger$.
The former approach has the advantage of recovering the distribution $\psi\diff\mu$ that may contain information on the minimizer of $f$ over $\set{K}$.
The latter approach has the benefit of being amenable to simpler optimization algorithms.
The possibility of using simpler algorithms is preferable for reasons other than efficiency.
Except in special cases on $(\cA,\set{K})$, the $\ell_{\infty}$ norms of the localizers $V_d(f\vct{y}),V_d(\vct{y})$ grow exponentially with $d$, and this can create numerical trouble even for reliable optimization solvers.

\subsection{Specific hierarchies for box-constrained problems}\label{subsec:example_upper_hierarchies}

The main obstacle in implementing the hierarchy from Theorem \ref{thm:generalUB} is computing the moment sequence of the reference measure $\mu$.
We consider here the special case where $\mu$ is the uniform measure over a box $\set{K} = [\ell_1,u_1] \times \cdots \times [\ell_n, u_n]$ and approach the problem through SAGE-based methods.

\begin{example}\label{ex:AK_complete_1_next}
    We continue from Example \ref{ex:AK_complete_1}.
    In order for that example to apply to our box-constrained problem, we assume the standard basis vectors $\vct{\delta}_{i} \in \R^n$ belong to $\emat$ and set
    \[
    G = \{ 1\} \cup \{ \e^{\evec}(\vct{u}) - \e^{\evec},~ \e^{\evec} - \e^{\evec}(\vct{\ell}) \,:\, \evec\in\cA\}.
    \]
    The sequence $\mathscr{C} = (C_d)_{d\geq 1}$ is $(\cA,\set{K})$-complete by Theorem~\ref{prop:ord_sage_constr_positiv}.
    It is also clearly nested and each constituent cone contains all posynomials up to a given degree (since $1 \in G$).
    Therefore the upper bounds from Theorem \ref{thm:generalUB} converge to $f_{\set{K}}^\star$ for any $f \in \R[\cA]$.
    
    Now we speak to how the bounds might be computed by solving \eqref{eq:sage:rep1} with bisection on $\theta$.
    Recalling the families of signomials $R_d(G)$ from equation \eqref{eq:def_RqG} and again abbreviating $d[h] = d - \deg_{\cA}(h)$, the dual cone to each $C_d$ is
    \begin{align*}
        C_d^\dagger = \left\{ \vct{v} \in \R^{\cA_d}_+ \,:\, V_{d[h]}(h\vct{v}) \in \csage_{\R^n}\left(\cA_{d[h]}\right)^\dagger ~ \forall~ h \in R_d(G)\right\}.
    \end{align*}
    When interpreting the above expression one should drop all constraints where $d[h] < 0$ and define $V_0(f\vct{v}) = L_{\vct{v}}(f)$.
    Notice how the problem of checking membership in $C_d^\dagger$ decouples over each $h \in R_d(G)$.
    Furthermore, checking membership in $\csage_{\R^n}(\cA_{d[h]})^\dagger$ decouples further to consider one dual AGE cone at a time.
    Finally, membership $\vct{v} \in \csage_{\R^n}(A,\bevec)^\dagger$ can be decided by solving a linear programming feasibility problem where a variable $\vct{z} \in \R^n$ satisfies $|A|$ inequalities (see equation \eqref{eq:represent_dual_x_age}). \hfill $\blacklozenge$
\end{example}

\begin{example}\label{ex:AK_complete_2_next}
    We revive Example \ref{ex:AK_complete_2} while assuming that $\cA$ is injective.
    It is easily verified that the provided sequence $(C_d)_{d \geq 1}$ satisfies the hypothesis of Theorem~\ref{thm:generalUB},
    therefore the associated upper bounds converge to $f_{\set{K}}^\star$.
    We turn to describing
    the necessary dual cones $C_d^\dagger$ for solving \eqref{eq:sage:rep1} by bisection.
    For $1 \leq j \leq d$, define $w_j = (\sum_{\evec \in \cA} \e^{\evec})^{d-j}$.
    We have
    \[
    C_d^\dagger = \left\{ \vct{v} \in \R^{\cA_{d}} \,:\, \forall j \leq d, \exists \vct{y}_j \in \Csage_{\set{K}}(\cA_{d})^\dagger \text{ where } V_j(\vct{v}) = V_j\left(w_j \vct{y}_j\right) \right\}.
    \]
    From this expression we see that the membership problem decomposes over $j \in [d]$.
    Each constituent feasibility problem ``$\vct{y}_j \in C_d^\dagger, V_j(\vct{v}) = V_j\left(w_j \vct{y}_j\right)$'' can be approached with any algorithm suitable for optimizing over $\csage_{\set{K}}(\cA_d)^\dagger$. \hfill $\blacklozenge$
\end{example}

\subsection{Proof of Theorem \ref{thm:generalUB}}\label{subsec:proof_lower_hier}

We need the following proposition.

\begin{proposition}\label{prop:strongdual}
    Let $\set{K}\subset\R^n$ be closed.
    Consider a closed convex cone $C_d \subset \R[\cA]_{d}$ of $\set{K}$-nonnegative signomials that contains all posynomials in $\R[\cA]_{d}$.
    For any Borel measure $\mu$ with support contained in $\set{K}$ and associated moment sequence $\vct{y} \in \R^{\cA_{\infty}}$, the pair \eqref{eq:sage:rep1}-\eqref{eq:upperBoundHierDef} exhibits strong duality.
    If $f_{\set{K}}^\star > -\infty$ and $\supp \mu$ has nonempty interior, then \eqref{eq:upperBoundHierDef} attains an optimal solution.
\end{proposition}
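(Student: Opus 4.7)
The plan is to recognize \eqref{eq:upperBoundHierDef} and \eqref{eq:sage:rep1} as a standard primal-dual pair of finite-dimensional conic programs and deduce both conclusions from Slater-type constraint qualifications. Throughout I will use that $y_{\evec} = \int\e^{\evec}\diff\mu > 0$ for every $\evec\in\cA_{\infty}$, which holds because $\mu$ is a nonzero Borel measure (the case $\mu \equiv 0$ is trivial) and each $\e^{\evec}$ is strictly positive on $\R^n$.

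For the strong-duality claim I will exhibit a Slater point for the primal \eqref{eq:upperBoundHierDef}. Using $y_\evec > 0$, I pick any $\vct{c} \in \R^{\cA_d}_{++}$ with $\sum_{\evec \in \cA_d} c_\evec y_\evec = 1$ and set $\psi_0 = \sum_{\evec \in \cA_d} c_\evec \e^\evec$. Since $\R^{\cA_d}_{+} \subset C_d$ by hypothesis and $\vct{c}$ lies in the interior of $\R^{\cA_d}_{+}$, the signomial $\psi_0$ lies in the interior of $C_d$ while satisfying $\langle V_d(\vct{y}), \vct{\psi_0}\rangle = 1$. Standard conic duality in finite dimensions then yields zero duality gap between \eqref{eq:upperBoundHierDef} and \eqref{eq:sage:rep1}.

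For the attainment claim, I assume $f_{\set{K}}^\star > -\infty$ and that $\supp\mu$ has nonempty interior, and I will exhibit a Slater point for the dual \eqref{eq:sage:rep1}. First, $C_d$ is pointed: if $\psi$ and $-\psi$ both lie in $C_d$ then $\psi$ vanishes on $\set{K} \supset \supp\mu$, and since signomials are real-analytic on the connected set $\R^n$ and $\supp\mu$ has nonempty interior, this forces $\vct{\psi} = 0$ (using also that the distinct exponentials $\{\e^{\evec}\}_{\evec \in \cA_d}$ are linearly independent). Combined with the fact that $C_d \supset \R^{\cA_d}_{+}$ linearly spans $\R[\cA]_{d}$, this makes $C_d^\dagger$ a pointed full-dimensional closed convex cone whose interior is characterized as $\{\vct{v} : \langle \vct{v}, \vct{\psi}\rangle > 0 \text{ for every nonzero } \psi \in C_d\}$. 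Fix any $\theta < f_{\set{K}}^\star$. For any nonzero $\psi \in C_d$, the same real-analyticity argument forces $\psi > 0$ on some open subset of $\supp\mu$, while $f - \theta > 0$ on $\set{K} \supset \supp\mu$, so
\[
    \langle V_d(f\vct{y}) - \theta V_d(\vct{y}), \vct{\psi}\rangle = \int \psi(f - \theta)\diff\mu > 0.
\]
Hence $V_d(f\vct{y}) - \theta V_d(\vct{y}) \in \mathrm{int}(C_d^\dagger)$, and this dual Slater point yields primal attainment.

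The main obstacle is the interior characterization used in the second part: establishing that $\int \psi(f - \theta)\diff\mu > 0$ for every nonzero $\psi \in C_d$ simultaneously requires the $\set{K}$-nonnegativity of elements of $C_d$, real-analyticity of signomials, the nonempty-interior hypothesis on $\supp\mu$, and the strict inequality $\theta < f_{\set{K}}^\star$. The remainder of the argument is routine bookkeeping for a primal-dual pair of finite-dimensional conic programs.
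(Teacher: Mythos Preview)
Your proposal is correct and follows essentially the same route as the paper: exhibit a strictly feasible posynomial $\psi$ for \eqref{eq:upperBoundHierDef} to get zero duality gap, then exhibit a strictly feasible $\theta$ for \eqref{eq:sage:rep1} to get primal attainment. The paper picks $\psi = \sum_{\evec\in\cA_d}\e^{\evec}$ and rescales, and for the second part takes $\theta < f_{\set{K}}^\star - 1$ so as to sandwich $\int(f-\theta)\psi\diff\mu > \int\psi\diff\mu > 0$; you instead take any $\theta < f_{\set{K}}^\star$ and argue strict positivity directly. Your explicit use of real-analyticity and linear independence of exponentials to justify $\int\psi\diff\mu > 0$ for nonzero $\psi\in C_d$ (and hence pointedness of $C_d$) fills in a step the paper leaves implicit.
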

\begin{proof}
    First we prove strong duality in the sense of objective values.
    Since $\vct{y}$ is a moment sequence, we have that $V_{d}(\vct{y}) = (y_{\evec} \,:\,\evec \in \cA_{d})$ is elementwise positive.
    It is obvious that the posynomial $\psi = \sum_{\evec \in \cA_{d}} \e^{\evec}$ belongs to the interior of $C_d$, and so defining $s \coloneqq \langle V_{d}(\vct{y}), \vct{\psi} \rangle > 0$, the signomial $\psi' = \psi / s$ is strictly feasible for \eqref{eq:upperBoundHierDef}.
    The claim follows by invoking Slater's condition.
    
    Now suppose $f_{\set{K}}^\star > -\infty$ and $\set{K}$ has nonempty interior.
    Then for any $\theta < f_{\set{K}}^\star - 1$ and every nonzero function $\psi \in C_d$,
    we have
    \[
        \langle V_{d}((f - \theta)\vct{y}), \vct{\psi} \rangle = \int \underbrace{(f(\vct{x}) - \theta)}_{> 1}\psi(\vct{x})\diff\mu(\vct{x}) > \int \psi(\vct{x})\diff\mu(\vct{x}) > 0.
    \]
    By the above inequalities, we have that $V_{d}((f - \theta)\vct{y}) = V_d(f\vct{y}) - \theta V_{d}(\vct{y})$ belongs to the interior of $C_d^\dagger$.
    Therefore \eqref{eq:sage:rep1} is strictly feasible, and by Slater's condition \eqref{eq:upperBoundHierDef} attains an optimal solution.
\end{proof}

Now, let $\cA$, $\set{K}$, $\mu$, $\mathscr{C} = (C_d)_{d\geq 1}$, and $\theta_{d}$ be as in the statement of Theorem \ref{thm:generalUB}.
It is easy to see that $\theta_{d}$ is decreasing in $d$:
since $C_d \subset C_{d+1}$ for all $d$, the size of the feasible set in the defining minimization problem \eqref{eq:abstract_upper_def} is increasing in $d$.

Since $\set{K}$ is compact and $f$ is continuous, we know that $f_{\set{K}}^\star$ is a real number.
The sequence $(\theta_{d})_{d \geq 1}$ is therefore decreasing and bounded below by $f_{\set{K}}^\star$.
Let $\theta^\star$ denote the limit of this sequence and suppose that $\theta^\star > f_{\set{K}}^\star$.
By consideration to the formulation \eqref{eq:sage:rep1}, this limit satisfies
\[
    V_d((f-\theta^\star)\vct{y}) \in C_d^\dagger \quad \text{ for all } \quad d \geq 1.
\]
But then by Theorem \ref{thm:outer_rep}, we have that $f - \theta^\star$ is nonnegative on $\set{K}$.
This contradicts our earlier assumption that $\theta^\star > f_{\set{K}}^\star$, and so we must have $\theta^\star = f_{\set{K}}^\star$, and this completes our proof of Theorem \ref{thm:generalUB}.

\begin{remark}
    Note that in general the convergence is only asymptotic and not finite. That is, in general, there does not exist a finite $d$ such that $\theta_{d} = f_{\set{K}}^\star$.
\end{remark}

\section{Conclusion}\label{sec:outlook}

In this article we have provided the most general \positiv{} to-date for signomials over compact sets.
We designed a complete hierarchy of lower bounds for signomial minimization based on relative entropy programming, which is the first-of-its-kind in that it respects the structure of signomial rings.
We provided a language for and basic results in signomial moment theory, and we used that theory to turn (hierarchical) inner-approximations of nonnegativity cones into (hierarchical) outer-approximations of the same.
Finally, we explained how any such construction leads to highly structured convex programs which produce bounds that approach a signomial's minimum from above.

A great many questions remain for understanding the power and limitations of these methods.
We take this space to record some specific suggestions for lines of future work.

\textit{Strength of the lower bounds.} When working in naive rings, the lowest level relaxation of our hierarchy will always be at least as strong as the lowest levels of the hierarchies in \cite{CS2016,MCW2019}.
For a given ring $\R[\cA]$, what can be said about the problem data $(f,G,\X)$ for which the lowest level of the hierarchy computes $f_{\set{K}}^\star$ exactly?
Alternatively, for given problem data $(f,G,\X)$, how can one choose the ring $\R[\cA]$ so that the proposed hierarchy exhibits ``good'' performance?

\textit{Moment problems.}
Solution recovery with dual formulations to our hierarchy of lower bounds uses candidate points $\vct{x} = \vct{z} / v_{\bevec}$ from dual AGE cones (see  \eqref{eq:represent_dual_x_age}). What is the best way to synthesize this information across the many dual AGE cones that compose a given problem?
This question was previously posed in \cite{MCW2019}, but in the context of a far more complicated (and heuristically designed) hierarchy of lower bounds.
More abstractly, we ask, how can one efficiently recover representing measures from truncated (finite) signomial moment sequences?

\textit{Detailed study of the upper bounds.} We have only provided the generic construction for hierarchies of upper bounds. For a specific construction (e.g., Example \ref{ex:AK_complete_1_next}) what is the convergence rate as a function of $d$? Is this rate dependent on whether or not $\cA$ contains the origin in the interior of its convex hull, as Proposition \ref{prop:sigRHT} might suggest?
As a separate line of questioning:
can one devise a complete SAGE-based hierarchy of upper bounds that does not rely on $(\cA,\set{K})$-complete sequences?
When $\set{K}$ is a compact convex set, one might try using \eqref{eq:abstract_upper_def} where $C_d$ is simply the cone of $\set{K}$-SAGE signomials in $\R[\cA]_d$.

\bibliography{references.bib}
\bibliographystyle{plain}

\end{document}